\documentclass[11pt, reqno]{amsart}
\usepackage{amsmath,amssymb,amscd,mathrsfs,amscd}
\usepackage{graphics,verbatim}
\linespread{1.1}

\textwidth15.4cm \textheight22.7cm \headheight12pt
\oddsidemargin.4cm \evensidemargin.4cm \topmargin0cm
\usepackage{amsmath,amstext,amsbsy,amssymb,amscd}
\usepackage{amsmath}
\usepackage{amsxtra}
\usepackage{amscd}
\usepackage{amsthm}
\usepackage{amsfonts}
\usepackage{graphicx}%
\usepackage{amssymb}
\usepackage{eucal}
\usepackage{color}
\usepackage{multirow}
\usepackage{graphicx}

\textwidth=15.4cm \oddsidemargin=.75cm \evensidemargin=.75cm

\newcommand{\white}[1]{{\color{white}#1}}

\newtheorem{thm}[subsubsection]{Theorem}
\theoremstyle{plain}
\newtheorem{lem}[subsubsection]{Lemma}
\newtheorem{prop}[subsubsection]{Proposition}

\newtheorem{cor}[subsubsection]{Corollary}

\theoremstyle{definition}

\newtheorem{example}[subsubsection]{Example}

\theoremstyle{remark}

\definecolor{A}{rgb}{.75,1,.75}

\numberwithin{equation}{section}

\newcommand{\catO}{{\mathcal{O}}}

\newcommand{\A}{\mathcal A}

\newcommand{\Z}{\mathbb Z}
\newcommand{\N}{\mathbb N}

\newcommand{\la}{\lambda}

\newcommand{\om}{\omega}
\newcommand{\ov}{\overline}
\newcommand{\osp}{\mathfrak{osp}}



%
{\vskip-\lastskip\medskip
  \noindent
  {\em #1.}\enspace
  }%
{\qed\par\medskip
  }

\newcommand{\Q}{{\mathbb{Q}}}
\newcommand{\Qq}{{\Q(q)}}
\newcommand{\curlyA}{{\mathcal{A}}}

\newcommand{\curlyT}{{\mathcal{T}}}
\newcommand{\curlyC}{{\mathcal{C}}}

\newcommand{\ff}{{\bf f}}
\newcommand{\ffpr}{{}'{\bf f}}

\newcommand{\UU}{{\bf U}}
\newcommand{\Up}{\UU^+}
\newcommand{\Um}{\UU^-}
\newcommand{\Uz}{\UU^0}

\newcommand{\UUpr}{{\bf 'U}}

\newcommand{\bbinom}[2]{\begin{bmatrix}#1 \\ #2\end{bmatrix}}
\newcommand{\ir}{{\vphantom{|}_i r}}
\newcommand{\ri}{{r_i}}

\newcommand{\height}{{\operatorname{ht}}}

\newcommand{\zero}{{\bar{0}}}
\newcommand{\one}{{\bar{1}}}

\newcommand{\tK}{{\wtd{K}}}
\newcommand{\tJ}{{\wtd{J}}}

\newcommand{\wtd}{\widetilde}

\usepackage[all,knot]{xy}
\xyoption{arc}

\newcommand{\set}[1]{\left\{#1\right\}}
\newcommand{\parens}[1]{\left(#1\right)}
\newcommand{\ang}[1]{\left\langle#1\right\rangle}

\renewcommand{\bar}[1]{\overline{#1}}

\begin{document}

\title
[Foundations of quantum supergroups]{Quantum supergroups I.
Foundations}
\author[Clark, Hill and Wang]{Sean Clark, David Hill and Weiqiang Wang}
\address{Department of Mathematics, University of Virginia, Charlottesville, VA 22904}
\email{sic5ag@virginia.edu (S.~Clark), deh4n@virginia.edu
(D.~Hill), ww9c@virginia.edu (W.~Wang)}

\begin{abstract}
In this part one of a series of papers, we introduce a new version of quantum covering and super groups with
no isotropic odd simple root, which is suitable for the study of
integrable modules,  integral forms and the bar involution.
A quantum covering group involves parameters $q$ and $\pi$ with $\pi^2=1$,
and it specializes at $\pi=-1$ to
a quantum supergroup. Following
Lusztig, we formulate and establish various structural results of
the quantum covering groups, including a bilinear form, quasi-$\mathcal
R$-matrix, Casimir element, character formulas for integrable modules, 
and higher Serre relations.
\end{abstract}
\maketitle

\setcounter{tocdepth}{1}\tableofcontents


\section*{Introduction}

Quantum groups have been ubiquitous in Lie theory, mathematical
physics, algebraic combinatorics and low-dimensional topology
since their introduction by
Drinfeld and Jimbo \cite{Dr, Jim}. We refer to the books of Lusztig
and Jantzen \cite{Lu, J} for a systematic development of the structure
and representation theory of quantum groups.

In a recent paper \cite{HW} by two of the authors, the spin nilHecke
and quiver Hecke algebras (see Wang \cite{Wa},
Kang-Kashiwara-Tsuchioka \cite{KKT}, Ellis-Khovanov-Lauda
\cite{EKL}) were shown to provide a categorification of
quantum covering groups with a quantum parameter $q$ and a second
parameter $\pi$ satisfying $\pi^2=1$ (we refer to {\em loc. cit.}
for more references on categorification); a quantum covering group
specializes at $\pi=-1$ to  half of a quantum supergroup with no isotropic
odd simple roots, and to  half of the Drinfeld-Jimbo quantum group at $\pi=1$.


In the
rank one case, a version of the full quantum covering and super group for $\osp(1|2)$
suitable for constructing an integral form, as well as integrable
modules over $\Qq$ corresponding to each nonnegative integer, was formulated
by two of the authors \cite{CW}. In particular, the structure and representation
theories of quantum $\mathfrak{sl}(2)$ and quantum $\osp(1|2)$
were shown to be in a complete agreement, also see \cite{Zou} (in contrast to the classical
fact that there are ``fewer" integrable modules for
$\osp(1|2)$ than for $\mathfrak{sl}(2)$).

The goal of this paper is to lay the foundations of quantum covering
and super groups with no isotropic odd simple roots, following
Lusztig \cite[Part~I]{Lu} as a blueprint.
We define a new version of quantum covering and super groups with
no isotropic odd simple root, which is suitable for the study of
integrable modules for {\em all} possible dominant integral weights, exactly
as for the Drinfeld-Jimbo quantum groups.
We formulate and
establish various structural results of the quantum covering and super groups,
including a bilinear form, twisted derivations, integral forms,
bar-involution, quasi-$\mathcal R$-matrix, Casimir, characters for
integrable modules, and quantum (higher) Serre relations.

The results of this paper on quantum covering groups reduce to
Lusztig's quantum group setting \cite{Lu} when specializing the
parameter $\pi$ to $1$, and on the other hand, reduce to quantum
supergroup setting when specializing the parameter $\pi$ to $-1$.
For this reason, we work almost exclusively with quantum covering
groups. Even if one is mainly interested in the super case, writing
$\pi$ systematically for the super sign $-1$ offers a conceptual
explanation for various formulas and constructions. For earlier
definitions of quantum supergroups, we refer to Yamane \cite{Ya},
Musson-Zou \cite{MZ}, Benkart-Kang-Melville \cite{BKM}.

Let us describe the main results in detail.  As
in \cite{Kac}, a super Cartan datum is a Cartan datum $(I,\cdot)$
with a partition $I=I_\zero \sqcup I_\one$ subject to some natural
conditions; also see \cite{HW}.
Note the only finite type super Cartan datum is of type $B(0,n)$,
for $n\ge 1$. In Section~\ref{sec:algebraf}, we formulate the definition 
of  half a quantum covering group associated to a super Catan datum.
We develop the
properties of a bilinear form (and a dual version) and twisted
derivations on half the quantum covering group systematically. Then
we provide a new proof using twisted derivations of a theorem in
\cite{HW} (also cf. Yamane \cite{ Ya} and Geer \cite{Gr}) that the existence of a
non-degenerate bilinear form implies the quantum Serre relations.

Motivated by the rank one construction in \cite{CW}, we formulate in
Section~\ref{sec:algebraU} a new version of quantum super and
covering groups with generators $E_i, F_i, K_\mu$, and
additional generators $J_\mu$, for $i \in I$ and $\mu\in Y$ (the co-weight lattice). The new generators $J_i$
play a crucial
role in formulating the notion of integrable modules of a quantum
supergroup for {\em all} dominant integral weights. A study of all such representations was not
possible before (cf. \cite{Kac, BKM}).

In Section~\ref{sec:quasiR}, we formulate the quasi-$\mathcal
R$-matrix for quantum covering or super groups and establish its
basic properties. This generalizes the construction in the rank one
case in \cite{CW}. Then we construct the quantum Casimir and use it
to prove the complete reducibility of the integrable modules. We
show that the simple integrable modules are parametrized by $\pi=\pm 1$ and the
dominant integral weights (in contrast to \cite{BKM, Kac}), and
their character formulas coincide with their
counterpart for quantum groups (which was established by Lusztig \cite{Lu1}).
This character formula  (in case $\pi=-1$) is shown to
hold for the irreducible integrable modules under some ``evenness"
restrictions on highest weights as in \cite{BKM} (where a definition
of quantum supergroups without operators $J_i$ was used),
deforming the construction in \cite{Kac}.

The higher Serre
relations for quantum covering groups are then established in
Section~\ref{sec:higherSerre}.

%

This paper lays the foundation for further studies of quantum covering and super groups.
In a sequel \cite{CHW}, we will construct the canonical basis, \`a
la Lusztig and Kashiwara, of quantum covering groups and of
integrable modules. In yet another paper, a braid group action
on a quantum covering group and its integrable modules will be studied in depth.

\vspace{.2cm}  {\bf Acknowledgement.}
The third author was partially supported by NSF DMS-1101268.

\section{The algebra $\bf f$}  \label{sec:algebraf}

In this section, starting with the super Cartan datum and root datum,
we formulate half a quantum covering group $\bf f$ in terms of a bilinear form
on a free superalgebra $\ffpr$, and show that the $(q,\pi)$-Serre relations are
satisfied in $\bf f$.

\subsection{Super Cartan datum}
 \label{subsec:Cartan}

A {\em Cartan datum} is a pair $(I,\cdot)$ consisting of a finite
set $I$ and a symmetric bilinear form $\nu,\nu'\mapsto \nu\cdot\nu'$
on the free abelian group $\Z[I]$ with values in $\Z$ satisfying
\begin{enumerate}
 \item[(a)] $d_i=\frac{i\cdot i}{2}\in \Z_{>0}$;

  \item[(b)]
$2\frac{i\cdot j}{i\cdot i}\in -\N$ for $i\neq j$ in $I$, where $\N
=\{0,1,2,\ldots\}$.
\end{enumerate}
If the datum can be decomposed as $I=I_\zero\coprod I_\one$ such that
\begin{enumerate}
        \item[(c)] $I_\one\neq\emptyset$,
        \item[(d)] $2\frac{i\cdot j}{i\cdot i} \in 2\Z$ if $i\in I_\one$,
\end{enumerate}
then it is called a {\em super Cartan datum}.

The $i\in I_\zero$ are called even, $i\in I_\one$ are called odd. We
define a parity function $p:I\rightarrow\set{0,1}$ so that $i\in
I_{\bar{p(i)}}$. We extend this function to the homomorphism
$p:\Z[I]\rightarrow \Z$. Then $p$ induces a $\Z_2$-grading on
$\Z[I]$ which we shall call the parity grading.
We define the {\em height} of $\nu=\sum_{i\in I}\nu_i i\in \Z[I]$ by $\height(\nu)=\sum \nu_i$. (Note we use different notation than \cite{Lu},
where the same quantity is denoted by ${\rm tr}(\nu)$.)

A super Cartan datum $(I,\cdot)$ is said to be of {\em finite}
(resp. {\em affine}) type exactly when $(I,\cdot)$ is of
finite (resp. affine) type as a Cartan datum (cf. \cite[\S 2.1.3]{Lu}).
In particular, from (a) and (d) 
we see that the only super Cartan datum of finite type is the one
corresponding to the Lie superalgebras of type
$B(0,n)$ for $n\geq 1$.

A super Cartan datum is called  {\em bar-consistent} or simply {\em
consistent} if it satisfies
\begin{enumerate}
        \item[(e)] $d_i\equiv p(i) \mod 2, \quad \forall i\in I.$
\end{enumerate}
We note that (e) is almost always satisfied for
super Cartan data of finite or affine type (with one exception). A
super Cartan datum is not assumed to be (bar-)consistent unless
specified explicitly below. (Roughly speaking, the ``bar-consistent"
condition is imposed whenever a bar involution is involved later
on.)

Note that (d) and (e) imply that
\begin{enumerate}
        \item[(f)] $i\cdot j\in 2\Z$ for all $i,j\in I$.
\end{enumerate}

\subsection{Root datum}\label{subsec:rootdatum}

A {\em root datum} associated to a super Cartan datum $(I,\cdot)$
consists of
\begin{enumerate}
\item[(a)]
two finitely generated free abelian groups $Y$, $X$ and a
perfect bilinear pairing $\ang{\cdot, \cdot}:Y\times X\rightarrow \Z$;

\item[(b)]
an embedding $I\subset X$ ($i\mapsto i'$) and an embedding $I\subset
Y$ ($i\mapsto i$) satisfying

\item[(c)] $\ang{i,j'}=\frac{2 i\cdot j}{i\cdot i}$ for all $i,j\in I$.
\end{enumerate}
We will always assume that the image of the imbedding $I\subset X$
(respectively, the image of the imbedding $I\subset Y$) is linearly
independent in $X$ (respectively, in $Y$).

Let $X^+=\set{\lambda\in X\mid \ang{i,\lambda}\in \N \text{ for all } i
\in I}$. Note that there are no additional ``evenness'' assumptions for $X^+$.

Let $\pi$ be a parameter such that
$$\pi^2=1.
$$
For any $i\in I$, we set
$$q_i=q^{i\cdot i/2}, \qquad \pi_i=\pi^{p(i)}.
$$
Note that when the datum is consistent, $\pi_i=\pi^{\frac{i\cdot
i}{2}}$; by induction, we therefore have
$\pi^{p(\nu)}=\pi^{\nu \cdot \nu/2}$ for $\nu\in \Z[I]$. 
We extend this notation so that if
$\nu=\sum \nu_i i\in \Z[I]$, then
$$
q_\nu=\prod_i q_i^{\nu_i}, \qquad \pi_\nu=\prod_i \pi_i^{\nu_i}.
$$
For any ring $R$ we define a new ring $R^\pi =R[\pi]/(\pi^2-1)$ 
(with $\pi$ commuting with $R$).
We shall need $\Qq^\pi$ below.


%
\subsection{Braid group and Weyl group}

Assume a Cartan (super) datum $(I, \cdot)$ is given. For $i\neq j
\in I$ such that $\ang{i,j'} \ang{j,i'}>0$, we define an integer
$m_{ij} \in \Z_{\ge 2}$ by $\cos^2 \frac{\pi}{m_{ij}} =
\frac14\ang{i,j'} \ang{j,i'}$ if it exists, and set $m_{ij}=\infty$ otherwise. We have
\begin{center}\begin{tabular}{c|ccccc}
$\ang{i,j'} \ang{j,i'}$&0&1&2&3&$\geq4$\\\hline
$m_{ij}$&2&3&4&6&$\infty$
\end{tabular}\end{center}

The braid group (associated to $I$) is the group generated by $s_i$
$(i\in I)$ subject to the relations (whenever $m_{ij}<\infty$):
\begin{align} \label{eq:BraidRelations}
\underbrace{s_is_js_i\cdots}_{m_{ij}}\,=\,\underbrace{s_js_is_j\cdots}_{m_{ij}},
\end{align}
The Weyl group $W$ is defined to be the group generated  by $s_i$
$(i\in I)$ subject to relations \eqref{eq:BraidRelations} 
and additional relations
$s_i^2=1$ for all $i$.

For $i\in I$, we let $s_i$ act on $X$ (resp. $Y$) as follows: for
$\la \in X, \la^\vee \in Y$,
$$
s_i(\lambda)=\lambda-\langle i,\lambda\rangle i',\qquad
s_i(\lambda^\vee)=\lambda^\vee-\langle \lambda^\vee,i'\rangle i.
$$
This defines actions of the Weyl group $W$ on $X$ and $Y$.

\subsection{The algebras $\ffpr$ and $\bf f$}

Define $\ffpr$ to be the free associative $\Qq^\pi$-superalgebra
with $1$ and with even generators $\theta_i$ for $i\in I_\zero$ and
odd generators $\theta_i$ for $i\in I_\one$. We abuse notation and
define the parity grading on $\ffpr$ by $p(\theta_i)=p(i)$.
We also have a weight grading $|\cdot|$ on $\ffpr$ defined
by setting $|\theta_i|=i$.

The tensor product 
$\ffpr \otimes \ffpr$ as a $\Qq^\pi$-superalgebra has the
multiplication
$$
(x_1 \otimes x_2)(x_1'\otimes x_2') =q^{|x_2|\cdot |x_1'|}
\pi^{p(x_2)p(x_1')} x_1x_1' \otimes x_2 x_2'.
$$

{\bf Here and below, in all displayed formulas, we will implicitly
assume the elements involved are $\N[I]\times \Z_2$-homogeneous. }

There is a similar multiplication formula in $\ffpr \otimes \ffpr \otimes \ffpr$:
\begin{align*}
(x_1 \otimes &  x_2 \otimes x_3)(x_1'\otimes x_2'\otimes x_3')
  \\
&=q^{|x_2|\cdot |x_1'| +|x_3|\cdot |x_2'| +|x_3|\cdot |x_1'|}
\pi^{p(x_2)p(x_1') +p(x_3)p(x_2') +p(x_3)p(x_1')} x_1x_1' \otimes
x_2 x_2' \otimes x_3 x_3'.
\end{align*}

We will take $r: \ffpr \rightarrow \ffpr\otimes\ffpr$ to be an
algebra homomorphism such that $r(\theta_i) =\theta_i \otimes 1
+1\otimes \theta_i$ for all $i\in I$. One checks the following
co-associativity holds:

$(r\otimes 1) r =(1\otimes r)r: \ffpr \rightarrow\ffpr \otimes \ffpr
\otimes \ffpr$;  this is an algebra homomorphism.

\begin{prop}\label{P:bilinearform}
There exists a unique bilinear form $(\cdot,\cdot)$ on $\ffpr$
with values in $\Q$ such that $(1,1)=1$ and
\begin{enumerate}
\item[(a)]
$(\theta_i, \theta_j) = \delta_{ij} (1-\pi_i q_i^{-2})^{-1} \quad
(\forall i,j\in I);$

\item[(b)]
$(x, y'y'') = (r(x), y'\otimes y'') \quad (\forall x,y',y'' \in
\ffpr);$

\item[(c)]
$(xx', y'') = (x\otimes x', r(y'')) \quad (\forall x,x',y'' \in
\ffpr).$
\end{enumerate}
Moreover, this bilinear form is symmetric.
\end{prop}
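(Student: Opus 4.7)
The plan is to adapt Lusztig's argument (Prop.~1.2.3 of \cite{Lu}) to the covering setting, where the tensor products of $\ffpr$ carry an extra $\pi$-twist alongside the usual $q$-twist.

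\emph{Uniqueness.} Iterating (b) together with the coassociativity $(r \otimes 1)r = (1 \otimes r)r$, one reduces the computation of $(x, \theta_{i_1} \cdots \theta_{i_n})$ to extracting the coefficient of $\theta_{i_1} \otimes \cdots \otimes \theta_{i_n}$ in the iterated coproduct of $x$, multiplied by $\prod_k (1-\pi_{i_k} q_{i_k}^{-2})^{-1}$ via (a). Since such monomials span each homogeneous component of $\ffpr$, this pins down the form.

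\emph{Existence.} I would define $(x, y)$ on $y = \theta_{i_1} \cdots \theta_{i_n}$ by exactly that formula, set $(x,y)=0$ whenever $|x| \ne |y|$, and extend bilinearly. Then (a) holds by inspection, and (b) follows by splitting the iterated coproduct as $(r^{(k-1)} \otimes r^{(n-k-1)}) r$, once more by coassociativity. The substantive content is (c): to prove $(xx', y) = (x \otimes x', r(y))$, I would induct on $\height(|y|)$, write $y = \theta_i y''$, and compute both sides using (b). On the left, $r(xx') = r(x)r(x')$ is expanded inside the twisted algebra $\ffpr \otimes \ffpr$; on the right, $r(\theta_i y'') = (\theta_i \otimes 1 + 1 \otimes \theta_i)\, r(y'')$ is expanded similarly. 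The $q^{|x_2| \cdot |x_1'|} \pi^{p(x_2) p(x_1')}$ prefactors generated by multiplying simple tensors on one side match those generated on the other, precisely because the multiplication rules on $\ffpr \otimes \ffpr$ and on $\ffpr^{\otimes 3}$ have been defined so as to make $r$ an algebra homomorphism.

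\emph{Symmetry.} Induct on $\height(|x|) = \height(|y|)$. The case of height $\le 1$ follows from (a) and $(1,1)=1$. For height $\ge 2$, pick a factorization $y = y' y''$ with both $|y'|, |y''| < |y|$ and apply (b) and (c) respectively:
$$(x, y'y'') = (r(x), y' \otimes y''), \qquad (y'y'', x) = (y' \otimes y'', r(x)).$$
Each tensor-factor pairing has strictly smaller total weight and so is symmetric by induction; hence the two expressions coincide.

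The \emph{main obstacle} is the verification of (c): tracking the interleaved $q$- and $\pi$-twists produced when expanding $r(xx')$ versus $r(y)$ in the twisted tensor algebras, and confirming that the twisted multiplication rules displayed in the paper are precisely those required to make both sides of (c) agree in the covering (i.e.\ $\pi$-dependent) setting, where additional sign bookkeeping beyond Lusztig's classical case is needed.
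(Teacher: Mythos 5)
Your proposal is correct and is essentially the same argument as the paper's: the key step in both is property (c), proved by induction using coassociativity and the observation that the $q$- and $\pi$-prefactors generated by the twisted multiplications on $\ffpr\otimes\ffpr$ and $\ffpr^{\otimes 3}$ match on the terms that contribute (i.e.\ those with matching weights and parities). The paper packages existence differently — it builds an algebra homomorphism $\phi:\ffpr\to\ffpr^*$ into the graded dual with the transposed-coproduct multiplication, which renders (a) and (b) automatic — whereas you define the form directly on monomials and verify (b) by hand; you also spell out a direct induction for symmetry where the paper leaves it implicit (the standard alternative is that the transposed form satisfies the same characterizing properties and is therefore equal by uniqueness). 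These are cosmetic reorganizations, not a different route.
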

Here, the induced bilinear form  $(\ffpr \otimes \ffpr) \times
(\ffpr \otimes \ffpr) \rightarrow \Q(q)$ is given by
\begin{equation}  \label{eq:tensorform}
(x_1\otimes x_2, x_1' \otimes x_2') :=
(x_1,x_1')(x_2, x_2'),
\end{equation}
for homogeneous $x_1,x_2, x_1',x_2' \in\ffpr$.

This is basically \cite[Proposition~3.3]{HW}, where $(\theta_i,
\theta_j) = \delta_{ij} (1-\pi_i q_i^{2})^{-1}$ was imposed (note a
different sign on the exponent for $q_i^2$). These two cases do not
exactly match under the bar-involution (which sends $q \mapsto \pi
q^{-1}$), and so we redo a careful proof here.

\begin{proof}
We follow \cite[1.2.3]{Lu} to define an associative algebra
structure on $\ffpr^* :=\oplus_{\nu} \ffpr^*_\nu$ by transposing the
``coproduct" $r: \ffpr \rightarrow \ffpr \otimes \ffpr$. In particular,
for $g,h\in \ffpr^*$, we define $gh(x):=(g\otimes h )(r(x))$,
where $(g\otimes h)(y\otimes z)=g(y)h(z)$.

Let $\xi_i\in \ffpr^*_i$ be defined by $\xi_i(\theta_i) =(1-\pi_i
q_i^{-2})^{-1}.$ Let $\phi: \ffpr \rightarrow \ffpr^*$ be the unique
algebra homomorphism such that $\phi(\theta_i) =\xi_i$ for all $i$.
The map $\phi$ preserves the $\N[I]\times \Z_2$-grading.

Define $(x,y) =\phi(y)(x)$, for $x,y\in \ffpr$. The properties (a) and (b)
follow directly from the definition.

Clearly $(x,y)=0$ unless (homogeneous) $x,y$ have the same weight in
$\N[I]$ and the same parity. All elements involved below will be
assumed to be homogeneous.


It remains to prove (c). Assume that (c) is known for $y''$ replaced
by $y$ or $y'$ and for any $x, x'$. We then prove that (c) holds for
$y''=yy'$. Write
\begin{align*}
r(x) =\sum x_1\otimes x_2, \quad & r(x') =\sum x_1'\otimes x_2',
  \\
r(y) =\sum y_1\otimes y_2, \quad & r(y') =\sum y_1'\otimes y_2'.
\end{align*}
Then
\begin{align*}
r(xx') &=\sum q^{|x_2|\cdot|x_1'|}\pi^{p(x_2)p(x_1')} x_1x_1'\otimes
x_2x_2',
  \\
r(yy') &=\sum q^{|y_2|\cdot|y_1'|}\pi^{p(y_2)p(y_1')} y_1y_1'\otimes
y_2y_2'.
\end{align*}
We have
\begin{align}
&(xx',yy') =(\phi(y)\phi(y'))(xx')=(\phi(y)\otimes\phi(y'))(r(xx'))
   \notag \\
&= \sum q^{|x_2|\cdot |x_1'|} \pi^{p(x_2)p(x_1')}
(x_1x_1',y)(x_2x_2',y')
   \notag \\
&= \sum q^{|x_2|\cdot |x_1'|} \pi^{p(x_2)p(x_1')}
(x_1 \otimes x_1',r(y))(x_2\otimes x_2',r(y'))
   \notag \\
&= \sum q^{|x_2|\cdot |x_1'|}
\pi^{p(x_2)p(x_1')}
(x_1,y_1)(x_1',y_2)(x_2,y_1')(x_2',y_2').
\label{eq:bilformd}
\end{align}

On the other hand,
\begin{align}
&(x\otimes x', r(yy'))
 =\sum q^{|y_2|\cdot|y_1'|}\pi^{p(y_2)p(y_1')}
(x\otimes x', y_1y_1'\otimes y_2y_2')
   \notag \\
&=\sum q^{|y_2|\cdot|y_1'|}\pi^{p(y_2)p(y_1')}
(x,y_1y_1')(x',y_2y_2')
   \notag \\
&=\sum q^{|y_2|\cdot|y_1'|}\pi^{p(y_2)p(y_1')}
(r(x),y_1\otimes y_1')(r(x'),y_2\otimes y_2')
   \notag \\
&=\sum q^{|y_2|\cdot|y_1'|}
\pi^{p(y_2)p(y_1')}
(x_1,y_1)(x_1',y_2)(x_2,y_1')(x_2',y_2').
\label{eq:bilforme}
\end{align}
For a summand to make nonzero contribution, we may assume that each
of the four pairs $\{x_1,y_1\}, \{x_1',y_2\}, \{x_2,y_1'\},
\{x_2',y_2'\}$ have the same weight in $\N[I]$ and the same parity.
One checks that the powers of $q$ and $\pi$ in \eqref{eq:bilformd} and
\eqref{eq:bilforme} match perfectly. Hence the two sums in \eqref{eq:bilformd} and
\eqref{eq:bilforme} are equal, and whence (c).
\end{proof}


We set $\mathcal{I}$ to denote the radical of $(\cdot, \cdot)$. As in
\cite{Lu}, this radical is a 2-sided ideal of $\ffpr$.


Let $\ff=\ffpr/\mathcal{I}$ be the quotient algebra of $\ffpr$ by
its radical. Since the different weight spaces are orthogonal with
respect to this inner product, the weight space decomposition
descends to a decomposition $\ff=\bigoplus_{\nu} \ff_\nu$ where
$\ff_\nu$ is the image of $\ffpr_\nu$. Each weight space is finite
dimensional. The bilinear form descends to a bilinear form on $\ff$
which is non-degenerate on each weight space.

Note that the notation of $\ffpr$ and $\bf f$ in this paper
corresponds to the notation of $\ffpr^\pi$ and $\bf f^\pi$ in
\cite{HW}.

The map $r:\ffpr\rightarrow \ffpr\otimes\ffpr$ satisfies
$r(\mathcal I)\subset \mathcal I\otimes \ffpr + \ffpr\otimes \mathcal I$
(the proof being entirely the same as in \cite[\S 1.2.6]{Lu}),
whence it descends to a 
well-defined homomorphism $r:\ff\rightarrow \ff\otimes \ff$.

Let
${}^t r: \ffpr \rightarrow \ffpr\otimes\ffpr$ be the composition of
$r$ with the permutation map \[x\otimes y \mapsto y\otimes x\] of
$\ffpr\otimes\ffpr$ to itself. (To have the signs work out below,
the tensor permutation cannot be signed.)

The anti-involution $\sigma: \ffpr \rightarrow \ffpr$ satisfies
$\sigma(\theta_i)=\theta_i$ for each $i\in I$ and
$$
\sigma (xy) = \sigma(y)\sigma(x).
$$

\begin{lem} 
\begin{enumerate}
\item[(a)]
We have $r(\sigma(x)) = (\sigma \otimes \sigma) {}^t r(x)$, for all $x\in \ffpr$.

\item[(b)]
We have $(\sigma(x),\sigma(x')) =(x, x')$ for all $x, x'\in \ffpr$.
\end{enumerate}
\end{lem}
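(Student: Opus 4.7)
For (a), I would proceed by induction on the height of $x$, exploiting that $\ffpr$ is generated by the $\theta_i$'s while $r$ is an algebra homomorphism and $\sigma$ is an anti-involution. The base cases $x=1$ and $x=\theta_i$ are immediate, since $r(\theta_i) = \theta_i \otimes 1 + 1 \otimes \theta_i$ is visibly invariant under both the unsigned flip ${}^t(\cdot)$ and $\sigma \otimes \sigma$.

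For the inductive step, suppose the identity holds for $x$ and $y$ and write $r(x) = \sum x_1 \otimes x_2$, $r(y) = \sum y_1 \otimes y_2$. The multiplication formula on $\ffpr \otimes \ffpr$ yields
\[ r(xy) = \sum q^{|x_2|\cdot|y_1|} \pi^{p(x_2)p(y_1)}\, x_1 y_1 \otimes x_2 y_2, \]
so after applying the flip and $\sigma\otimes\sigma$,
\[ (\sigma\otimes\sigma)\, {}^tr(xy) = \sum q^{|x_2|\cdot|y_1|}\pi^{p(x_2)p(y_1)}\, \sigma(y_2)\sigma(x_2) \otimes \sigma(y_1)\sigma(x_1). \]
On the other hand, since $\sigma$ is an anti-involution and $r$ is multiplicative, $r(\sigma(xy)) = r(\sigma(y))\,r(\sigma(x))$, and the inductive hypothesis identifies this with $\bigl(\sum \sigma(y_2)\otimes\sigma(y_1)\bigr)\bigl(\sum \sigma(x_2)\otimes\sigma(x_1)\bigr)$. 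Expanding this product with the twisted rule and using that $\sigma$ preserves weight and parity produces the same expression as above; the only point to verify is that the $q$- and $\pi$-exponents $|x_2|\cdot|y_1|$ and $p(x_2)p(y_1)$ arising on both sides agree, which is immediate from symmetry of the bilinear form on $\Z[I]$ and commutativity.

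For (b), the cleanest route is to invoke the uniqueness assertion in Proposition~\ref{P:bilinearform}. Define $B(x,y) := (\sigma(x),\sigma(y))$; then $B$ is bilinear, $B(1,1)=1$, and axiom (a) of the proposition holds because $\sigma(\theta_i) = \theta_i$. To check axiom (b) for $B$, one computes
\[ B(x, y'y'') = (\sigma(x), \sigma(y'')\sigma(y')) = (r(\sigma(x)),\, \sigma(y'') \otimes \sigma(y')), \]
and then substitutes $r(\sigma(x)) = (\sigma\otimes\sigma)\,{}^tr(x)$ from part (a); unraveling the flip converts the right-hand side into $\sum B(x_1, y') B(x_2, y'')$ where $r(x) = \sum x_1\otimes x_2$. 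Axiom (c) for $B$ is verified by the symmetric calculation starting from axiom (c) for $(\cdot,\cdot)$. By the uniqueness part of Proposition~\ref{P:bilinearform}, $B = (\cdot,\cdot)$, which is precisely (b). The main obstacle throughout is the bookkeeping of $q$- and $\pi$-factors from the twisted multiplication in part (a); once (a) is secured, (b) is essentially formal.
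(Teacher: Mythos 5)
Your proof is correct and follows essentially the same approach as the paper: part (a) is proved by the same induction on the generators $\theta_i$ using multiplicativity of $r$, the anti-multiplicativity of $\sigma$, the unsigned flip, and symmetry of the $\Z[I]$-form, while the paper simply asserts that (b) "follows immediately from (a)"—your expansion via the uniqueness clause of Proposition~\ref{P:bilinearform} is a clean and correct way to make that implication explicit.
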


\begin{proof}
Since (b) will follow immediately from (a), it suffices to prove that
$r(\sigma(x)) = (\sigma \otimes \sigma) {}^t r(x)$, for all $x\in \ffpr$.
This is obviously true for $x\in\set{1,\theta_i: i\in I}$.

Suppose that $r(\sigma(x')) = (\sigma \otimes \sigma) {}^t r(x')$
and $r(\sigma(x'')) = (\sigma \otimes \sigma) {}^t r(x'')$. Let
$r(x')=\sum x_1'\otimes x_2'$ and $r(x'')=\sum x_1''\otimes x_2''$.
Then $r(x'x'')=\sum
q^{|x_2'||x_1''|}\pi^{p(x_2')p(x_1'')}x_1'x_1''\otimes x_2'x_2''$
and we have
\begin{align}
r(\sigma(x'x''))&=r(\sigma(x''))r(\sigma(x'))\notag\\
&=\parens{\sum  \sigma(x_2'')\otimes \sigma(x_1'')}
\parens{\sum \sigma(x_2')\otimes \sigma(x_1')}\notag\\
&=\sum \pi^{p(x_2')p(x_1'')}q^{|x_2'||x_1''|}
\sigma(x_2'x_2'')\otimes \sigma(x_1'x_1'')=\sigma\otimes \sigma
(^tr(x'x'')).
 \notag
\end{align}
The lemma is proved.
\qed\end{proof}

We note that $\sigma$ descends to $\ff$ and shares the above properties.


Let $\ov{\phantom{r}}:\Qq^\pi\rightarrow\Qq^\pi$ be the unique
$\Q$-algebra involution (called the bar involution) 
satisfying $\ov{q}= \pi q^{-1}$ and $\ov{\pi}=
\pi$.

Assume the super Cartan datum is consistent.
Then
\begin{align}\label{eq:barinvqi}
\bar{q_i}=\pi_i
q_i^{-1}.
\end{align}
We define a bar involution $\ov{\phantom{r}}:\ffpr\rightarrow\ffpr$
such that $\ov{\theta_i} =\theta_i$ for all $i\in I$ and $\ov{fx}
=\ov{f}\ov{x}$ for $f\in\Qq^\pi$ and $x\in \ffpr$.

Let $\ffpr \bar{\otimes} \ffpr$ be the $\Qq^\pi$-vector space $\ffpr
\otimes \ffpr$ with multiplication given by
$$
(x_1 \otimes x_2)(x_1'\otimes x_2') =(\pi q^{-1})^{|x_2|\cdot
|x_1'|} \pi^{p(x_2)p(x_1')} x_1x_1' \otimes x_2 x_2'.
$$

Define $\ov{r}$ still by $\ov{r}(x) =\ov{r(\ov{x})}$. Then
$\ov{r}:\ffpr \rightarrow \ffpr \bar{\otimes} \ffpr$ is an algebra
homomorphism, being a composition of homomorphisms.

The co-associativity holds for $\ov{r}$:
\[
(\ov r \otimes 1)(\ov r (x))=\ov{(r\otimes
1)r(\bar{x})}=\ov{(1\otimes r)r(\bar{x})}=(1 \otimes \ov r)(\ov r
(x)).
\]
By checking on the algebra generators $\theta_i$, it is an easy
computation to see that this is an algebra homomorphism.

Let $\{\cdot,\cdot\}:\ffpr \times \ffpr \rightarrow \Q(q)$ be the
symmetric bilinear form defined by
$$
\{x,y\} =\ov{(\ov{x}, \ov{y})}.
$$
It satisfies: $\{1,1\}=1$, and
\begin{align*}
\{\theta_i,\theta_j\} &=\delta_{i,j} (1- \pi_iq_i^2)^{-1};
 \\
\{x, y'y''\} &=\{\ov{r} (x), y'\otimes y''\},  \text{ for all }x, y', y'' \in \ffpr.
\end{align*}
\begin{lem} \label{L:curlyform}
Assume the super Cartan datum is consistent.
\begin{enumerate}
\item[(a)]
Let $r(x) = \sum x_1\otimes x_2$. We have
$$\ov{r}(x)
=\sum (\pi q)^{-|x_1|\cdot |x_2|} \pi^{p(x_1)p(x_2)} x_2\otimes x_1.$$

\item[(b)]
$\{x,y\}=(-1)^{\height |x|} \pi^{\frac{p(x)p(y)+p(x)}{2}}
q^{-|x|\cdot |y|/2}q_{-|x|}(x,\sigma(y)).$
\end{enumerate}
\end{lem}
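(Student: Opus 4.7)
The plan is to exhibit both sides as algebra homomorphisms $\ffpr \to \ffpr \bar{\otimes} \ffpr$ that agree on generators. The map $\ov{r}$ is such a homomorphism by construction. Let $\phi(x)$ denote the right-hand side of (a). On generators, where $r(\theta_i) = \theta_i \otimes 1 + 1 \otimes \theta_i$, both summands have trivial weight and parity on one tensor factor, so $\phi(\theta_i) = \theta_i \otimes 1 + 1 \otimes \theta_i = \ov{r}(\theta_i)$. The remaining check is that $\phi$ is multiplicative into $\ffpr \bar{\otimes} \ffpr$: expand $r(xy) = r(x)r(y)$ (with its twist $q^{|x_2|\cdot|y_1|}\pi^{p(x_2)p(y_1)}$), apply the $\phi$-formula to obtain $\phi(xy)$, and compare against $\phi(x)\phi(y)$ in $\ffpr \bar{\otimes} \ffpr$ (with its twist $(\pi q^{-1})^{|x_1|\cdot|y_2|}\pi^{p(x_1)p(y_2)}$). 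The $q$-exponents on each summand match algebraically, while the $\pi$-exponents differ only by a factor $\pi^{|y_1|\cdot|x_2|}$ which is trivial by condition (f).

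\textbf{Part (b).} My plan is to invoke uniqueness. By the same algebra-on-dual argument used for $(\cdot,\cdot)$ in Proposition \ref{P:bilinearform}, the bilinear form $\{\cdot,\cdot\}$ is uniquely characterized by $\{1,1\} = 1$, $\{\theta_i, \theta_j\} = \delta_{ij}(1 - \pi_i q_i^2)^{-1}$, and the coproduct compatibility $\{x, y'y''\} = \{\ov{r}(x), y' \otimes y''\}$. Define $\{x, y\}'$ as the right-hand side of (b); I will verify these three properties. The first is immediate. For the second, the prefactor at $i=j$ evaluates to $-\pi_i q_i^{-2}$, and the identity $-\pi_i q_i^{-2}(1-\pi_i q_i^{-2})^{-1} = (1-\pi_i q_i^2)^{-1}$ follows from $\pi_i^2 = 1$ after clearing denominators. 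For the coproduct property, expand $\ov{r}(x)$ via (a), use $\sigma(y'y'') = \sigma(y'')\sigma(y')$ together with Proposition \ref{P:bilinearform}(c) to write $(x, \sigma(y'y'')) = \sum (x_1, \sigma(y''))(x_2, \sigma(y'))$, and match summand by summand. On each nonvanishing term, weight-matching forces $|x_1|=|y''|$ and $|x_2|=|y'|$ (with matching parities $\alpha,\beta$); the $q$-exponents balance exactly, while the $\pi$-exponents reconcile via the polynomial identity $\alpha(\alpha+1)/2 + \beta(\beta+1)/2 + \alpha\beta = (\alpha+\beta)(\alpha+\beta+1)/2$, leaving a residual $\pi^{-|x_1|\cdot|x_2|}$ that vanishes by (f).

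\textbf{Main obstacle.} The principal difficulty in both parts is the $\pi$-exponent bookkeeping. In (a), three sources of twist (two multiplications plus the permutation) must cancel exactly on $q$-exponents and up to condition (f) on $\pi$-exponents. In (b), the coproduct split produces a mix of parity-squared and parity-sum contributions that only reconcile via the combinatorial identity above, together with weight-weight inner products that disappear under consistency. Condition (f) is the essential ingredient throughout: without it, the $\pi$-exponents involving weight inner products would obstruct both identities.
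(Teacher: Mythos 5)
Your proof is correct, and both parts follow essentially the same inductive/multiplicative strategy as the paper: the paper proves (a) by computing $\ov{r}(x'x'')=\ov{r(\ov{x'})\,r(\ov{x''})}$ directly and invoking $|x_1''|\cdot|x_2'|\in 2\Z$ from consistency, while you phrase this as checking that the right-hand side defines an algebra homomorphism into $\ffpr\bar{\otimes}\ffpr$ agreeing with $\ov{r}$ on generators — the same computation, up to presentation. Likewise for (b), the paper shows the set of $y$ for which the formula holds is closed under products by verifying the coproduct compatibility, which is exactly the axiom you verify; your appeal to uniqueness is just a repackaging of that induction, and the combinatorial $\pi$-exponent identity and the residual $\pi^{-|x_1|\cdot|x_2|}$ killed by condition (f) both check out.
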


\begin{proof}
It is straightforward to check both claims are true when
$x=\theta_i$ and $y=\theta_j$ for some $i,j\in I$.

Assume (a) holds for $x$ replaced by $x'$ and by $x''$. We shall
prove the claim for $x=x'x''$.

Recall $\ov{q} =\pi q^{-1}$, and $r(\ov{x}) =\ov{\ov{r}(x)}.$
Write
\begin{align}
r(x') &=\sum x_1'\otimes x_2', \quad  r(x'') =\sum x_1''\otimes x_2'',
  \notag\\
r(x'x'') &=\sum q^{|x_1''|\cdot |x_2'|} \pi^{p(x_1'')p(x_2')} x_1'x_1''\otimes x_2'x_2''.\label{eq:curlyformc}
\end{align}

By assumption, we have
\begin{align*}
r(\ov{x'}) &=\sum q^{|x_1'|\cdot |x_2'|} \pi^{p(x_1')p(x_2')} \ov{x_2'}\otimes\ov{x_1'},
  \\
r(\ov{x''}) &=\sum q^{|x_1''|\cdot |x_2''|} \pi^{p(x_1'')p(x_2'')} \ov{x_2''}\otimes\ov{x_1''}.
\end{align*}
Hence,
\begin{align*}
r(\ov{x'}) r(\ov{x''})
 &=\sum q^{|x_1'|\cdot |x_2'|} \pi^{p(x_1')p(x_2')}
 q^{|x_1''|\cdot |x_2''|} \pi^{p(x_1'')p(x_2'')}
  (\ov{x_2'}\otimes\ov{x_1'})(\ov{x_2''}\otimes\ov{x_1''})
  \\
 &=\sum q^{|x_1'|\cdot |x_2'|+|x_1''|\cdot |x_2''|}
  \pi^{p(x_1')p(x_2')+p(x_1'')p(x_2'')+p(x_1')p(x_2'')}
 q^{|x_1'|\cdot |x_2''|}
 \ov{x_2'}\ov{x_2''}\otimes\ov{x_1'}\ov{x_1''}.
\end{align*}
Then,
\begin{align}
\ov{r}(x'x'') &=\ov{r(\ov{x'})r(\ov{x''})}
  \notag\\
 &=\sum (\pi q)^{-(|x_1'|\cdot |x_2'|+|x_1''|\cdot |x_2''|+|x_1'|\cdot |x_2''|)}
  \pi^{p(x_1')p(x_2')+p(x_1'')p(x_2'')+p(x_1')p(x_2'')}
 {x_2'}{x_2''}\otimes {x_1'}{x_1''}
  \notag\\
 &=\sum (\pi q)^{-|x_1'x_1''|\cdot |x_2'x_2''|}
  \pi^{p(x_1'x_1'')p(x_2'x_2'')}
  q^{|x_1''|\cdot |x_2'|}\pi^{p(x_1'')p(x_2')+|x_1''|\cdot |x_2'|}
 {x_2'}{x_2''}\otimes {x_1'}{x_1''}.\notag\\
 \intertext{Now, since the datum is consistent,
 $|x_1''|\cdot |x_2'|\in 2\Z$, and hence we have}
 \ov{r}(x'x'') &=\sum (\pi q)^{-|x_1'x_1''|\cdot |x_2'x_2''|}
  \pi^{p(x_1'x_1'')p(x_2'x_2'')}
  q^{|x_1''|\cdot |x_2'|}\pi^{p(x_1'')p(x_2')}
 {x_2'}{x_2''}\otimes {x_1'}{x_1''}.\label{eq:curlyformd}
\end{align}
Comparing \eqref{eq:curlyformc} and \eqref{eq:curlyformd}, we see that (a) holds.

Let $\mathcal{S}$ be the set of $y\in \ffpr$ such that (b) holds for all $x\in \ffpr$.
Let $y',y''\in \mathcal S$; we will show $y=y'y''\in \mathcal S$
Let $x\in \ffpr$ and write $r(x)=\sum x'\otimes x''$ with $x,x''$ homogeneous.
Then
\begin{align*}
\set{x,y'y''} &=\set{\ov{r}(x), y'\otimes y''}
= \set{\sum (\pi q)^{-|x'|\cdot |x''|} \pi^{p(x')p(x'')}x''\otimes x', y'\otimes y''}\\
&=\sum q^{-|x'|\cdot |x''|} \pi^{p(x')p(x'')} \set{x'',y'}\set{x',y''}\\
&=\sum(-1)^{\height |x'|+\height |x''|}
q^{\frac{-|x''|\cdot |y'|-|x'|\cdot |y''|-2|x'|\cdot |x''|}{2}}q_{-|x'|-|x''|}\\
&\qquad*\pi^{p(x')p(x'')+\frac{p(x')p(y'')+p(x')}{2}+\frac{p(x'')p(y')-p(x'')}{2}}
 (x'',\sigma(y'))(x',\sigma(y''))\\
&\stackrel{(\dagger)}{=}\sum (-1)^{\height|x|}q^{\frac{-|x|\cdot |y|}{2}}q_{-|x|}
\pi^{\frac{p(x)p(y)+p(x)}{2}}(x'\otimes x'',\sigma(y'')\otimes \sigma(y'))\\
&=(-1)^{\height|x|}q^{\frac{-|x|\cdot |y|}{2}}q_{-|x|}\pi^{\frac{p(x)p(y)+p(x)}{2}}(x,\sigma(y'y''))
\end{align*}
where the equality $(\dagger)$ follows from the observation that the
nonzero terms in the sum only occur when the each of the pairs
$\set{x',y''}$ and $\set{x'', y'}$ are of the same weight and
parity.  Therefore we see $y\in \mathcal S$. Since the algebra generators lie
in $\mathcal S$, the claim is proved.
\qed\end{proof}

In particular, we observe the following corollary.

\begin{cor}
Assume the super Cartan datum is consistent.
Then $\ov{\phantom{x}}\ $ descends to an involution on $\ff$.
\end{cor}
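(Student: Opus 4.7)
The goal is to show that $\overline{\mathcal{I}} \subseteq \mathcal{I}$, where $\mathcal{I}$ is the radical of the bilinear form $(\cdot,\cdot)$. Since $\overline{\phantom{x}}:\ffpr \to \ffpr$ is already a $\Q$-algebra involution on $\ffpr$, once we verify this inclusion the map will descend well-defined to $\ff$ and remain an involution there.

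The plan is to compare the two bilinear forms $(\cdot,\cdot)$ and $\{\cdot,\cdot\}$ and exploit the interaction of the latter with $\overline{\phantom{x}}$. By definition of $\{\cdot,\cdot\}$, we have
\[
(\bar x, \bar y) = \overline{\{x,y\}} \quad \text{for all } x,y \in \ffpr.
\]
By Lemma~\ref{L:curlyform}(b) (assuming the super Cartan datum is consistent, as in the hypothesis), $\{x,y\}$ is a nonzero scalar multiple of $(x,\sigma(y))$. Since $\sigma:\ffpr\to\ffpr$ is a bijection, it follows that the radical of $\{\cdot,\cdot\}$ coincides with $\mathcal{I}$.

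Now suppose $x\in\mathcal{I}$. Then for every $y\in\ffpr$, we have $(x,\sigma(y))=0$, and hence $\{x,y\}=0$ by Lemma~\ref{L:curlyform}(b). Therefore $(\bar x,\bar y)=\overline{\{x,y\}}=0$ for all $y\in\ffpr$. Because $\overline{\phantom{x}}$ is a bijection on $\ffpr$, the element $\bar y$ ranges over all of $\ffpr$ as $y$ does, so $(\bar x, z)=0$ for every $z\in\ffpr$. This shows $\bar x \in \mathcal{I}$, as required.

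The argument is essentially bookkeeping once Lemma~\ref{L:curlyform}(b) is in hand: the real content is already packaged in that lemma, which translates the bar-twisted form $\{\cdot,\cdot\}$ into a $\sigma$-twist of $(\cdot,\cdot)$. There is no serious obstacle here; one only needs to confirm the consistency hypothesis is available (it is, by assumption) so that the scalar factor in Lemma~\ref{L:curlyform}(b) makes sense over $\Qq^\pi$ and is invertible, ensuring the two radicals really do coincide.
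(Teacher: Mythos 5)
Your argument is correct and is exactly the route the paper intends: the corollary is stated immediately after Lemma~\ref{L:curlyform} with no written proof, precisely because part (b) of that lemma gives the scalar-and-$\sigma$ relation between $\{\cdot,\cdot\}$ and $(\cdot,\cdot)$ that you use, together with $\{x,y\}=\overline{(\bar x,\bar y)}$, to conclude $\overline{\mathcal I}\subseteq\mathcal I$.
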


\subsection{The maps $r_i$ and $_i r$}

Let $i\in I$. Clearly there are unique $\Qq^\pi$-linear maps
$\ri,\ir:\ffpr\rightarrow\ffpr$ such that $\ri(1)=\ir(1)=0$ and
$\ri(\theta_j)=\ir(\theta_j)=\delta_{ij}$ satisfying
\[
\ir(xy)=\ir(x)y+\pi^{p(x)p(i)}q^{|x|\cdot i}x\ir(y)
\]
\[
\ri(xy)=\pi^{p(y)p(i)}q^{|y|\cdot i}\ri(x)y+x\ri(y)
\]
for homogeneous $x,y\in\ffpr$, see \cite{K}. We see that if $x\in \ffpr_\nu$, then
$\ir(x),\ri(x)\in\ffpr_{\nu-i}$ and moreover that
\begin{equation}  \label{eq:derivcoprod}
r(x)=\ri(x)\otimes \theta_i+\theta_i\otimes \ir(x)+(...)
\end{equation}
where $(...)$ stands in for other bi-homogeneous terms $x'\otimes x''$
with $|x'|\neq i$ and $|x''|\neq i$. Therefore, we have
\begin{equation}  \label{eq:derivadjunct}
(\theta_i y, x) = (\theta_i,\theta_i)(y, \ir(x)), \quad
(y \theta_i, x) = (\theta_i,\theta_i)(y, \ri(x))
\end{equation}
for all $x,y\in \ffpr$, so $\ir(\mathcal{I})\cup
\ri(\mathcal{I})\subseteq \mathcal{I}$. Hence, both maps descend to
maps on $\ff$. It is also easy to check that
\[\ri\, \sigma=\sigma\, \ir.\] Indeed, this is trivially true for the
generators, and if this holds for $x,y\in \ff$, then
\begin{align*}
\ri \, \sigma(xy)&=\ri(\sigma(y)\sigma(x))
=\pi^{p(i)p(x)}q^{i\cdot |x|} \ri(\sigma(y))\sigma(x)+\sigma(y)\ri(\sigma(x))\\
&=\sigma(\pi^{p(i)p(x)}q^{i\cdot |x|} x\ir(y)+\ir(x)y)=\sigma\,
\ir(xy).
\end{align*}

\begin{lem}
Assume $(I,\cdot)$ is consistent. For any homogeneous $x\in\ff$, we
have \[\ri(x)=\pi^{p(x)p(i)-p(i)p(i)}q^{|x|\cdot i-i\cdot
i}\ov{\ir(\ov{x})}.
\]
\end{lem}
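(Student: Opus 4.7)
The plan is to proceed by induction on the height $\height(|x|)$. The base cases $x=1$ and $x=\theta_j$ are immediate: for $j\neq i$ both sides vanish, while for $j=i$ we have $\ri(\theta_i)=1=\ov{\ir(\theta_i)}$ and the prefactor $\pi^{p(i)p(i)-p(i)p(i)}q^{i\cdot i - i\cdot i}$ equals $1$. Since $\ri$, $\ir$, and the bar involution all descend from $\ffpr$ to $\ff$, it suffices to carry out the induction on $\ffpr$.

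For the inductive step, I would write $x=yz$ with $y,z$ homogeneous of strictly smaller height. The twisted Leibniz rule for $\ri$ gives directly
\[\ri(yz) = \pi^{p(z)p(i)}q^{|z|\cdot i}\ri(y)z + y\ri(z).\]
On the other hand, since the bar involution preserves weight and parity, applying the Leibniz rule for $\ir$ to $\ov{y}\,\ov{z}$ and then barring (using $\ov{q^{|y|\cdot i}}=(\pi q^{-1})^{|y|\cdot i}$ and $\ov{\ov{w}}=w$) yields
\[\ov{\ir(\ov{yz})} = \ov{\ir(\ov{y})}\,z + \pi^{p(y)p(i)}(\pi q^{-1})^{|y|\cdot i}\, y\,\ov{\ir(\ov{z})}.\]
Substituting the inductive hypothesis for $\ov{\ir(\ov{y})}$ and $\ov{\ir(\ov{z})}$, multiplying through by $\pi^{p(yz)p(i)-p(i)p(i)}q^{|yz|\cdot i - i\cdot i}$, and collecting, I would match each resulting summand with the corresponding summand in $\ri(yz)$.

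The crux, and the only place consistency is genuinely used, is that condition (f) yields $|y|\cdot i\in 2\Z$, so $\pi^{|y|\cdot i}=1$; the remaining cross-contribution $\pi^{2p(y)p(i)}$ is likewise $1$. The main obstacle is therefore pure bookkeeping of $\pi$ and $q$ exponents, but the consistency hypothesis collapses exactly the signs that would otherwise obstruct the match.
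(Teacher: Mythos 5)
Your proposal is correct and follows essentially the same route as the paper: induction on height, the twisted Leibniz rules for $\ri$ and $\ir$, barring, substituting the inductive hypothesis, and using consistency (condition (f)) to kill the residual $\pi^{|y|\cdot i}$ factor. Your accounting of the two collapsing $\pi$-powers, $\pi^{2p(y)p(i)}$ and $\pi^{|y|\cdot i}$, is exactly the bookkeeping the paper carries out.
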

\begin{proof}
This is trivial when $x=\theta_i$. Now assume this is true for $x,y\in \ffpr$. Then
\begin{align*}
\ov{\ir(\ov{xy})}&=\ov{\ir(\ov{x})}y+\pi^{p(x)p(i)}(\pi q)^{-|x|\cdot i}\ov{\ir(\ov{y})}\\
&=\pi^{-p(x)p(i)+p(i)p(i)}q^{-|x|\cdot i+i\cdot i}
\ri(x)y \\
 &\quad +\pi^{-p(y)p(i)+p(i)p(i)}q^{-|y|\cdot i+i\cdot i}\pi^{p(x)p(i)}(\pi q)^{-|x|\cdot i}x\ri(y)\\
&=\pi^{-p(x+y)p(i)+p(i)p(i)}q^{-|x+y|\cdot i+i\cdot i}\parens{\pi^{p(y)p(i)}q^{|y|\cdot i}\ri(x)y+x\ri(y)}\\
&=\pi^{-p(x+y)p(i)+p(i)p(i)}q^{-|x+y|\cdot i+i\cdot i}\ri(xy).
\end{align*}
The lemma is proved.
\qed\end{proof}

\begin{lem}\label{lem:derivkill}
Let $x\in \ff_\nu$ where $\nu\in \N[I]$ is nonzero.
\begin{enumerate}
\item[(a)] If $\ri(x)=0$ for all $i\in I$, then $x=0$.
\item[(b)] If $\ir(x)=0$ for all $i\in I$, then $x=0$.
\end{enumerate}
\end{lem}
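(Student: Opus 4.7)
The plan is to exploit the adjunction identity \eqref{eq:derivadjunct} between the left/right multiplications by $\theta_i$ and the twisted derivations $\ir$ and $\ri$, together with the non-degeneracy of $(\cdot,\cdot)$ on each weight space $\ff_\nu$.

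For part (b), I would argue as follows. Since $\nu \neq 0$, every element of $\ff_\nu$ is a $\Qq^\pi$-linear combination of monomials of positive length, i.e.\ of elements of the form $\theta_i y$ with $i \in I$ and $y \in \ff_{\nu - i}$. Thus to show $x = 0$ it suffices, by non-degeneracy of $(\cdot,\cdot)$ on $\ff_\nu$, to show that $(\theta_i y, x) = 0$ for every $i \in I$ and every $y \in \ff_{\nu-i}$. But by \eqref{eq:derivadjunct},
\[
(\theta_i y, x) = (\theta_i, \theta_i)\, (y, \ir(x)) = 0,
\]
since $\ir(x)=0$ by hypothesis. This gives $x=0$. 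Note that $(\theta_i,\theta_i) = (1 - \pi_i q_i^{-2})^{-1}$ is a nonzero scalar, so it plays no role beyond making the identity meaningful.

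Part (a) is entirely symmetric: any element of $\ff_\nu$ is a sum of monomials $y \theta_i$, and the second half of \eqref{eq:derivadjunct} gives $(y\theta_i, x) = (\theta_i,\theta_i)(y, \ri(x)) = 0$, so again non-degeneracy forces $x=0$. (Alternatively, (a) follows from (b) by applying $\sigma$, using $\ri\sigma = \sigma\ir$ and the fact that $\sigma$ preserves the bilinear form, as shown in the lemma just before.)

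There is no real obstacle here: the only substantive ingredients are (i) the adjunction \eqref{eq:derivadjunct}, (ii) the non-degeneracy of $(\cdot,\cdot)$ on the weight space $\ff_\nu$ (which holds by the very construction of $\ff$ as $\ffpr$ modulo the radical), and (iii) the trivial observation that in positive weight every element of $\ffpr$, and hence of $\ff$, can be written as a sum starting (resp.\ ending) with some generator $\theta_i$. The proof is essentially a two-line consequence of these facts.
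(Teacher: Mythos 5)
Your proof is correct and is essentially the same argument as the paper's: combine the adjunction identity \eqref{eq:derivadjunct} with the fact that $\ff_\nu$ ($\nu\neq 0$) is spanned by monomials $\theta_i y$ (resp.\ $y\theta_i$) and invoke the non-degeneracy of $(\cdot,\cdot)$ on $\ff$. The paper proves (a) first and notes (b) is symmetric, whereas you do (b) first, but the content is identical.
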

\begin{proof}
Suppose that $\ri(x)=0$ for all $i$.
Using  \eqref{eq:derivadjunct}, this means that $(y\theta_i,x)=0$ for all $y\in \ff$
and all $i\in I$. But since $\ff$ is spanned by monomials in the $\theta_i$, this implies
$x\in\mathcal{I}$, and so $x=0$ in $\ff$. The proof of (b) proceeds similarly.
\qed\end{proof}

\subsection{Gaussian  $(q,\pi)$-binomial coefficients}
\label{sec:gaussids}

Let $\A=\Z[q,q^{-1}]$, and let $\A^\pi$ be as in $\S$1.1.3. For $a\in \Z$ and $t\in \N$, we define the
{\em $(q,\pi)$-binomial coefficients} to be

\begin{align}
\bbinom{a}{t}_i&=\frac{\prod_{s=0}^{t-1}
((\pi_iq_i)^{a-s}-q_i^{s-a})}{\prod_{s=1}^{t}((\pi_iq_i)^{s}-q_i^{-s})}. 
\notag \\
\intertext{We have }
\label{eq:binomida} \bbinom{a}{t}_i&=(-1)^t\pi_i^{ta-\binom{t}{2}}\bbinom{t-a-1}{t}_i, \\
\label{eq:binomidb} \bbinom{a}{t}_i&=0\qquad\text{if }0\leq a <t, \\
\label{eq:binomidc} \prod_{j=0}^{a-1}&\parens{1+(\pi_i q_i^{2})^jz}
=\sum_{t=0}^a\pi_i^{\binom{t}{2}}q_i^{t(a-1)}\bbinom{a}{t}_i z^t
 \quad \text{if } a\geq 0.
\end{align}
Here $z$ is another indeterminate. From \eqref{eq:binomida} and \eqref{eq:binomidc} we deduce that
\begin{equation}\label{eq:binomidd}
\bbinom{a}{t}_i\in\ \A.
\end{equation} 
If $a', a''$ are integers and
$t\in\N$, then
\begin{equation}\label{eq:binomide}
\bbinom{a'+a''}{t}_i=\sum_{t'+t''=t}
\pi_i^{t't''+a't''}
q_i^{a't''-a''t'}
\bbinom{a'}{t'}_i\bbinom{a''}{t''}_i.
\end{equation}

We have $\bbinom{-1}{t}_i=(-1)^t\pi_i^{\binom{t+1}{2}}$ for any
$t\geq 0$, $i\in I$.

For {\em $(q,\pi)$-integers} we shall denote
\[
[n]_i=\bbinom{n}{1}_i=\frac{(\pi_iq_i)^n-q_i^{-n}}{\pi_iq_i-q_i^{-1}}\quad
\text{for } n\in\Z,
\]
\[
[n]^!_i=\prod_{s=1}^n [s]_i\quad \text{for } n\in\N,
\]
and with this notation we have
\[
\bbinom{a}{t}_i=\frac{[a]_i^!}{[t]_i^![a-t]_i^!}\quad \text{for
}0\leq t \leq a.
\]
Note that the  $(q,\pi)$-integers $[n]_i$ and the $(q,\pi)$-binomial
coefficients in general are not necessarily bar-invariant unless the
super Cartan datum is consistent; see \eqref{eq:barinvqi}.

If $a\geq 1$, then we have
\begin{equation}\label{eq:binomidf}
\sum_{t=0}^a(-1)^t\pi_i^{\binom{t}{2}}q_i^{t(a-1)}\bbinom{a}{t}_i=0
\end{equation}
which follows from \eqref{eq:binomidc} by setting $z=-1$.

If $x,y$ are two elements in a $\Qq^\pi$-algebra such that $xy=\pi_i
q_i^2 yx$, then for any $a\geq 0$, we have the quantum binomial
formula:
\begin{equation}\label{eq:quantumbinomial}
(x+y)^a=\sum_{t=0}^a q_i^{t(a-t)}\bbinom{a}{t}_iy^tx^{a-t}.
\end{equation}

\subsection{Quantum Serre relations}
 \label{subsec:Af}

For any $n\in \Z$, let the divided powers $\theta_i^{(n)}$ (in $\ff$
or $\ffpr$) be defined as $\theta_i/[n]_i^!$ if $n\geq 0$ and $0$
otherwise.

\begin{lem}\label{lem:divpowcoprod}
For any $n\in\Z$ we have
\begin{enumerate}
\item[(a)] $r(\theta_i^{(n)}) =\sum_{t+t'=n} q_i^{tt'}
\theta_i^{(t)}\otimes \theta_i^{(t')}$,
\item[(b)] $\ov{r}(\theta_i^{(n)}) =\sum_{t+t'=n} (\pi_i q_i)^{-tt'}
\theta_i^{(t)}\otimes \theta_i^{(t')}$.
\end{enumerate}
\end{lem}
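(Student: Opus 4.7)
The plan is to reduce both (a) and (b) to applications of the quantum binomial formula \eqref{eq:quantumbinomial}. Since $\theta_i^{(n)}=\theta_i^n/[n]_i^!$ for $n\ge 0$ (and both sides trivially vanish otherwise), and since $r$ (resp.\ $\ov r$) is an algebra homomorphism sending $\theta_i$ to $\theta_i\otimes 1+1\otimes\theta_i$, the task reduces to expanding $(\theta_i\otimes 1+1\otimes \theta_i)^n$ in $\ffpr\otimes\ffpr$ for (a) and in $\ffpr\bar\otimes\ffpr$ for (b), and then dividing by $[n]_i^!$.

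For (a), I would first compute the commutation relation between the two summands using the multiplication formula in $\ffpr\otimes\ffpr$: one gets $(\theta_i\otimes 1)(1\otimes \theta_i)=\theta_i\otimes\theta_i$ while $(1\otimes \theta_i)(\theta_i\otimes 1)=\pi_iq_i^2\,\theta_i\otimes\theta_i$, using $p(i)^2\equiv p(i)\pmod 2$. Setting $y=\theta_i\otimes 1$ and $x=1\otimes\theta_i$ gives $xy=\pi_iq_i^2 yx$, so \eqref{eq:quantumbinomial} yields $(x+y)^n=\sum_t q_i^{t(n-t)}\bbinom{n}{t}_i y^tx^{n-t}$. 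Because one tensor slot is always trivial, no extra factors arise when forming $y^tx^{n-t}=\theta_i^t\otimes\theta_i^{n-t}$, and the claimed identity for $r(\theta_i^{(n)})$ drops out after dividing by $[n]_i^!$ and setting $t'=n-t$.

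For (b), the same strategy works in $\ffpr\bar\otimes\ffpr$, using that $\ov r(\theta_i)=\theta_i\otimes 1+1\otimes\theta_i$. The analogous computation now gives $(1\otimes\theta_i)(\theta_i\otimes 1)=(\pi q^{-1})^{i\cdot i}\pi^{p(i)^2}\theta_i\otimes\theta_i=\pi_iq_i^{-2}\theta_i\otimes\theta_i$, where I use that $i\cdot i=2d_i$ is even to conclude $\pi^{i\cdot i}=1$. Swapping the roles of $x$ and $y$ so that $xy=\pi_iq_i^2yx$ still holds, I apply \eqref{eq:quantumbinomial}. The product $y^tx^{n-t}$ in $\ffpr\bar\otimes\ffpr$ now picks up an extra factor of $(\pi q^{-1})^{tt'\cdot i\cdot i}\pi^{tt'p(i)^2}=\pi_i^{tt'}q_i^{-2tt'}$, which combines with the $q_i^{tt'}$ from the binomial expansion to give $(\pi_iq_i^{-1})^{tt'}=(\pi_iq_i)^{-tt'}$ (using $\pi_i^2=1$). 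Using the symmetry $\bbinom{n}{t}_i=\bbinom{n}{n-t}_i$ to relabel $t\leftrightarrow n-t$ and dividing by $[n]_i^!$ finishes (b).

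I do not anticipate a conceptual obstacle; the only real care required is bookkeeping of the $\pi$ and $q$ factors in the bar-tensor product for (b), and in particular spotting that $\pi^{i\cdot i}=1$ is what collapses the expression into the uniform factor $(\pi_iq_i)^{-tt'}$.
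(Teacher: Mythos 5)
Your proof is correct and follows essentially the same route as the paper: apply the quantum binomial formula \eqref{eq:quantumbinomial} to $\theta_i\otimes 1$ and $1\otimes\theta_i$ in $\ffpr\otimes\ffpr$ (resp.\ $\ffpr\bar\otimes\ffpr$), then divide by $[n]_i^!$. You are slightly more careful than the paper's one-line proof in part (b), where the relation is $xy=\pi_iq_i^{-2}yx$ rather than $\pi_iq_i^2yx$ with the paper's stated choice of $x,y$, so the binomial formula must be applied after swapping roles (or in its mirrored form); your explicit bookkeeping of the $(\pi q^{-1})^{i\cdot i}$ factor and the use of $\pi^{i\cdot i}=1$ fills a small gap that the paper leaves to the reader.
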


\begin{proof}
By the quantum binomial formula \eqref{eq:quantumbinomial} applied to $x=1\otimes
\theta_i$ and $y=\theta_i\otimes 1$, the formula follows.
\qed\end{proof}

\begin{lem}\label{lem:divpownorm}
For any $n\geq 0$, we have
\[
(\theta_i^{(n)},\theta_i^{(n)})=\prod_{s=1}^n
\frac{\pi_i^{s-1}}{1-(\pi_iq_i^{-2})^s}
=\pi_i^nq_i^{\binom{n+1}{2}}(\pi_iq_i-q_i^{-1})^{-n}
([n]^!_i)^{-1}.
\]
\end{lem}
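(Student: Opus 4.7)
The plan is to prove the identity by induction on $n$, with the base case $n=0$ trivial (both sides equal $1$) and $n=1$ immediate from part (a) of Proposition~\ref{P:bilinearform}. For the inductive step, since $\theta_i^{(n)} = \theta_i \theta_i^{(n-1)}/[n]_i$, I would apply the adjunction formula \eqref{eq:derivadjunct} to obtain
\[
[n]_i \cdot (\theta_i^{(n)}, \theta_i^{(n)}) = (\theta_i\, \theta_i^{(n-1)}, \theta_i^{(n)}) = (\theta_i,\theta_i) \cdot (\theta_i^{(n-1)},\ir(\theta_i^{(n)})).
\]
The ingredient I need next is the value of $\ir(\theta_i^{(n)})$, and for this I would use Lemma~\ref{lem:divpowcoprod}(a): writing $r(\theta_i^{(n)}) = \sum_{t+t'=n} q_i^{tt'} \theta_i^{(t)}\otimes \theta_i^{(t')}$ and comparing with the defining expansion \eqref{eq:derivcoprod}, the $\theta_i \otimes (\cdot)$ component is $q_i^{n-1}\,\theta_i \otimes \theta_i^{(n-1)}$, yielding $\ir(\theta_i^{(n)}) = q_i^{n-1}\theta_i^{(n-1)}$.

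Plugging this back in together with $(\theta_i,\theta_i) = (1-\pi_iq_i^{-2})^{-1}$ gives the recursion
\[
(\theta_i^{(n)}, \theta_i^{(n)}) = \frac{q_i^{n-1}}{[n]_i(1-\pi_iq_i^{-2})}\,(\theta_i^{(n-1)},\theta_i^{(n-1)}).
\]
The main (though still routine) obstacle is to verify that this recursion matches the claimed product, i.e.\ that
\[
\frac{q_i^{n-1}}{[n]_i(1-\pi_iq_i^{-2})} = \frac{\pi_i^{n-1}}{1-(\pi_iq_i^{-2})^n}.
\]
To check this, I would clear denominators and use $\pi_i^2=1$ together with the factorization $1-(\pi_iq_i^{-2})^s = \pi_i^s q_i^{-s}(\pi_iq_i - q_i^{-1})[s]_i$; indeed, $\pi_i^n q_i^n - q_i^{-n} = (\pi_iq_i-q_i^{-1})[n]_i$ and $1-\pi_iq_i^{-2} = \pi_i q_i^{-1}(\pi_iq_i - q_i^{-1})$, after which the desired equality reduces to a tautology. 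The induction hypothesis then delivers the first expression for $(\theta_i^{(n)},\theta_i^{(n)})$.

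Finally, to obtain the second expression, I would apply the factorization $1-\pi_i^s q_i^{-2s} = \pi_i^s q_i^{-s}(\pi_iq_i - q_i^{-1})[s]_i$ termwise in the product, whence
\[
\prod_{s=1}^{n}(1-(\pi_iq_i^{-2})^s) = \pi_i^{\binom{n+1}{2}} q_i^{-\binom{n+1}{2}} (\pi_iq_i-q_i^{-1})^n [n]_i^{!},
\]
while the numerator contributes $\prod_{s=1}^{n}\pi_i^{s-1} = \pi_i^{\binom{n}{2}}$. Combining these, using $\pi_i^{\binom{n}{2}-\binom{n+1}{2}} = \pi_i^{-n} = \pi_i^n$, yields the claimed closed form $\pi_i^n q_i^{\binom{n+1}{2}}(\pi_iq_i - q_i^{-1})^{-n}([n]^!_i)^{-1}$, completing the proof.
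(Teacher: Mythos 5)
Your proof is correct and follows essentially the same route as the paper's: induction on $n$, the decomposition $\theta_i^{(n)}=\theta_i\theta_i^{(n-1)}/[n]_i$, and Lemma~\ref{lem:divpowcoprod}(a), leading to the same recursion $(\theta_i^{(n)},\theta_i^{(n)})=q_i^{n-1}[n]_i^{-1}(\theta_i,\theta_i)(\theta_i^{(n-1)},\theta_i^{(n-1)})$. The only cosmetic difference is that you invoke the adjunction \eqref{eq:derivadjunct} and the derivation $\ir$, whereas the paper applies property (c) of Proposition~\ref{P:bilinearform} directly to $r(\theta_i^{(n)})$; these are interchangeable since \eqref{eq:derivadjunct} is itself derived from (c).
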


\begin{proof}
We prove by induction on $n$. The lemma is true by definition for $n=0,1$.
For general $n$, it follows by Lemma~\ref{lem:divpowcoprod}(a) that
\begin{align*}
\parens{\theta_i^{(n)},\theta_i^{(n)}}&=[n]_i^{-1}
\parens{\theta_i^{(n-1)}\otimes\theta_i,r\parens{\theta_i^{(n)}}}
 \\
&=[n]_i^{-1}\parens{\theta_i^{(n-1)}\otimes\theta_i,\sum_{t+t'=n} q_i^{tt'}
 \theta_i^{(t)}\otimes \theta_i^{(t')}}
  \\
&=[n]_i^{-1}\parens{\theta_i^{(n-1)}\otimes\theta_i, q_i^{n-1}\theta_i^{(n-1)}\otimes\theta_i}\\
&=q_i^{n-1}[n]_i^{-1}(\theta_i,\theta_i)\parens{\theta_i^{(n-1)},\theta_i^{(n-1)}}.
 \\
\intertext{Hence by the induction hypothesis, we have}
\parens{\theta_i^{(n)},\theta_i^{(n)}}
&=q_i^{n-1}[n]_i^{-1}(1-\pi_iq_i^{-2})^{-1}
\pi_i^{n-1}q_i^{\binom{n}{2}}(\pi_iq_i-q_i^{-1})^{-n+1} ([n-1]^!_i)^{-1}\\
&=\pi_i^{n}q_i^{\binom{n+1}{2}}(\pi_iq_i-q_i^{-1})^{-n} ([n]^!_i)^{-1}.
\end{align*}
The lemma is proved.
\qed\end{proof}

\begin{prop}
[Quantum Serre relation] \label{prop:quantserre}
The generators
$\theta_i$ of $\ff$ satisfy the relations
\[
\sum_{n+n'=1-\ang{i,j'}}(-1)^{n'}\pi_i^{n'p(j)+\binom{n'}{2}}
\theta_i^{(n)}\theta_j\theta_i^{(n')}=0
\]
for any $i\neq j$ in $I$.
\end{prop}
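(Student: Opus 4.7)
The plan is to prove that the left-hand side $S \in \ffpr$ lies in the radical $\mathcal{I}$, and therefore vanishes in $\ff = \ffpr/\mathcal{I}$. Since $S$ has nonzero weight $Ni + j$ with $N = 1 - \ang{i,j'}$, Lemma~\ref{lem:derivkill}(b) reduces the claim to verifying ${}_k r(\bar S) = 0$ in $\ff$ for every $k \in I$, where $\bar S$ is the image of $S$. Because ${}_k r$ is induced from its action on $\ffpr$, it suffices to show ${}_k r(S) = 0$ already in $\ffpr$. For $k \notin \{i,j\}$ this is immediate, since ${}_k r$ annihilates both $\theta_i$ and $\theta_j$.

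For $k = j$, using ${}_j r(\theta_i) = 0$ and ${}_j r(\theta_j) = 1$, iterated application of the twisted Leibniz rule gives
\[
{}_j r\bigl(\theta_i^{(n)}\theta_j\theta_i^{(n')}\bigr)
 = \pi^{n\,p(i)p(j)}\,q^{n\,i\cdot j}\,\theta_i^{(n)}\theta_i^{(n')}
 = \pi^{n\,p(i)p(j)}\,q_i^{-n(N-1)}\,\bbinom{N}{n}_i\,\theta_i^{(N)},
\]
using $q^{i\cdot j} = q_i^{\ang{i,j'}} = q_i^{1-N}$ and $\theta_i^{(n)}\theta_i^{(n')} = \bbinom{N}{n}_i\theta_i^{(N)}$. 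After summing, factoring out $\theta_i^{(N)}$, and reindexing $n = N - n'$ (and using $\bbinom{N}{n}_i = \bbinom{N}{n'}_i$), the scalar collapses to a nonzero multiple of
\[
\sum_{n'=0}^{N}(-1)^{n'}\pi_i^{\binom{n'}{2}}\,q_i^{n'(N-1)}\,\bbinom{N}{n'}_i,
\]
which vanishes by identity~\eqref{eq:binomidf} (since $N \geq 1$).

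For $k = i$, one first checks by induction from ${}_i r(\theta_i) = 1$ that ${}_i r(\theta_i^{(n)}) = q_i^{n-1}\theta_i^{(n-1)}$. Since ${}_i r(\theta_j) = 0$, applying the Leibniz rule twice to each monomial in $S$ produces two terms. After reindexing the first sum by $m = n-1,\,m' = n'$ and the second by $m = n,\,m' = n'-1$ (both giving $m + m' = N-1$), and using $n + n' = N$ to simplify the $q_i$-exponent, the two sums become proportional and combine into
\[
{}_i r(S) \;=\; (1 - \pi_i^{N-1})\sum_{m+m'=N-1}(-1)^{m'}\pi_i^{m'p(j)+\binom{m'}{2}}\,q_i^{m}\,\theta_i^{(m)}\theta_j\theta_i^{(m')}.
\]
The prefactor $1 - \pi_i^{N-1}$ vanishes: if $i \in I_\zero$ then $\pi_i = 1$ automatically; if $i \in I_\one$ then axiom~(d) of the super Cartan datum forces $\ang{i,j'} \in 2\Z$, so $N - 1$ is even and $\pi_i^{N-1} = 1$. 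This is the only place the super Cartan axiom~(d) enters the proof.

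The main obstacle is the bookkeeping of $\pi$- and $q$-exponents in the $k = i$ computation: the two Leibniz-rule terms sit at different grade shifts and pick up twist factors from moving $\theta_i$ past $\theta_j$ and past its own divided powers. Only after the reindexing and the crucial observation $2n + n' - N = n$ do the sums align and recombine into the clean prefactor $1 - \pi_i^{N-1}$, at which point the super Cartan axiom~(d) does exactly what is required.
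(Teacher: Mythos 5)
Your proof is correct. It follows the same high-level strategy as the paper's proof: show that every partial twisted derivation annihilates the Serre element $S$, then conclude $S=0$ by Lemma~\ref{lem:derivkill}. The difference is that you apply the left derivations ${}_kr$ and Lemma~\ref{lem:derivkill}(b), whereas the paper applies the right derivations $r_k$ and Lemma~\ref{lem:derivkill}(a). This mirror choice shifts where the super Cartan constraint enters: in the paper the constraint is invoked during the $r_j$ computation (to rewrite $q_i^{t(1-N)}$ and match the binomial identity), while in your version the $k=j$ step lands directly on the sum appearing in \eqref{eq:binomidf} with no further hypothesis needed, and the constraint instead surfaces in the $k=i$ step, where the two reindexed Leibniz sums recombine into a clean $(1-\pi_i^{N-1})$ prefactor. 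Your version is arguably slightly tidier on the $j$-side; both are equally valid. One small but useful correction to the record: the condition that kills $1-\pi_i^{N-1}$ (equivalently, $\langle i,j'\rangle\in 2\Z$ for $i\in I_\one$) is axiom (d) of the super Cartan datum, as you say, not axiom (e) as the published proof states; (e) is the bar-consistency condition $d_i\equiv p(i)\bmod 2$ and is not what is needed at that step. I verified your intermediate computations (the exponent collapse $2n+n'-N=n$, the reindexing giving $m'p(j)+\binom{m'}{2}+(N-1)$ in the second sum, and the factoring of $\pi_i^{Np(j)}q_i^{-N(N-1)}$ in the $j$-step) and they are all correct.
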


Proposition~\ref{prop:quantserre} appeared as \cite[Theorem~3.8]{HW}.
We shall give a new and simpler proof of Proposition~\ref{prop:quantserre} 
below after some preparation.

\begin{lem}
Let $N\in \N$ and $a,a'\in \N$ with $N=a+a'$. Let $i,j,k\in I$ be
pairwise distinct. Then
\begin{enumerate}
\item[(a)] $r_k(\theta_{i}^{(a)}\theta_j\theta_i^{(a')})
 =0$,
\item[(b)] $r_j(\theta_{i}^{(a)}\theta_j\theta_i^{(a')})
=q_i^{a'\ang{i,j}}\pi_i^{a'p(j)}\bbinom{N}{a'}_i\theta_i^{(N)}$,
\item[(c)] $r_i(\theta_{i}^{(a)}\theta_j\theta_i^{(a')})
=q_i^{a'+(N+\ang{i,j}-1)}\pi_i^{a'+p(j)}\theta_i^{(a-1)}\theta_j\theta_i^{(a')}
+q_i^{a'-1}\theta_i^{(a)}\theta_j\theta_i^{(a'-1)}$.
\end{enumerate}
\end{lem}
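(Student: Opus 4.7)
The plan is to apply the twisted Leibniz rule for $\ri$ from Section~1.5 repeatedly, with a single auxiliary formula as the key input. From Lemma~\ref{lem:divpowcoprod}(a), the coproduct expansion
\[
r(\theta_i^{(n)}) = \sum_{t+t'=n} q_i^{tt'} \theta_i^{(t)} \otimes \theta_i^{(t')},
\]
combined with \eqref{eq:derivcoprod}, yields (by extracting the $t'=1$ summand)
\[
\ri(\theta_i^{(n)}) = q_i^{n-1} \theta_i^{(n-1)}.
\]
I will also freely use the standard divided-power multiplication rule $\theta_i^{(a)} \theta_i^{(a')} = \bbinom{a+a'}{a'}_i \theta_i^{(a+a')}$.

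For (a), since $k \neq i, j$ implies $r_k(\theta_i) = r_k(\theta_j) = 0$, the Leibniz rule forces $r_k$ to kill every monomial in $\theta_i$ and $\theta_j$; in particular, $r_k(\theta_i^{(a)}\theta_j\theta_i^{(a')}) = 0$. For (b), the same reasoning shows that the terms where $r_j$ acts on $\theta_i^{(a)}$ or $\theta_i^{(a')}$ vanish, so only the contribution where $r_j$ hits the middle $\theta_j$ survives. Unwinding the Leibniz rule for $r_j(\theta_j \cdot \theta_i^{(a')})$, the surviving summand carries the shift factor $\pi^{p(\theta_i^{(a')})p(j)} q^{|\theta_i^{(a')}|\cdot j} = \pi_i^{a' p(j)} q_i^{a' \ang{i,j}}$, leaving the residual factor $\theta_i^{(a)}\theta_i^{(a')} = \bbinom{N}{a'}_i \theta_i^{(N)}$.

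For (c), a single application of the Leibniz rule gives
\[
r_i(\theta_i^{(a)}\theta_j\theta_i^{(a')}) = \pi^{p(\theta_j\theta_i^{(a')})p(i)} q^{|\theta_j\theta_i^{(a')}|\cdot i} r_i(\theta_i^{(a)}) \,\theta_j\theta_i^{(a')} + \theta_i^{(a)} r_i(\theta_j\theta_i^{(a')}).
\]
The second term expands (using $r_i(\theta_j) = 0$) to $q_i^{a'-1}\theta_i^{(a)}\theta_j\theta_i^{(a'-1)}$. Substituting $r_i(\theta_i^{(a)}) = q_i^{a-1}\theta_i^{(a-1)}$ in the first term and collecting exponents via $|\theta_j\theta_i^{(a')}|\cdot i = i\cdot j + a'(i\cdot i)$ and $p(\theta_j\theta_i^{(a')}) \equiv p(j) + a' p(i) \pmod 2$ produces the scalar $\pi_i^{p(j)+a'}q_i^{\ang{i,j}+2a'+(a-1)} = \pi_i^{a'+p(j)} q_i^{a'+N+\ang{i,j}-1}$, matching the claim. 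The main work is the careful bookkeeping of the $q$- and $\pi$-shifts in (c); the rest is essentially automatic once the formula for $\ri(\theta_i^{(n)})$ is in hand.
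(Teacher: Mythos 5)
Your proof is correct and follows essentially the same route as the paper's: both derive $r_i(\theta_i^{(n)}) = q_i^{n-1}\theta_i^{(n-1)}$ from Lemma~\ref{lem:divpowcoprod}(a) together with \eqref{eq:derivcoprod}, and then apply the twisted Leibniz rule for $r_i$ (the paper states a three-factor version explicitly; you iterate the two-factor rule, which is the same computation). The scalar bookkeeping in (b) and (c) is accurate, using $q^{i\cdot j}=q_i^{\ang{i,j'}}$ and $\pi^{p(i)p(j)}=\pi_i^{p(j)}$ as required.
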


\begin{proof}
Part (a) is clear from definitions. By \eqref{eq:derivcoprod} and
Lemma~ \ref{lem:divpowcoprod}(a) we have
\[
r_{i'}(\theta_{j'}^{(a)})=\delta_{i',j'}q_{i'}^{a-1}\theta_{i'}^{(a-1)}.
\]
Parts (b) and (c) follow from this and noting
\[
r_i(cba)=cbr_i(a)+\pi^{p(i)p(a)}q^{i\cdot|a|}cr_i(b)a
+\pi^{p(i)p(a)+p(i)p(b)}q^{i\cdot|a|+i\cdot|b|}r_i(c)ba.
\]
The lemma is proved.
\qed\end{proof}

\subsubsection{Proof of Proposition~\ref{prop:quantserre}}
\label{subsec:proofSerre}

Let $N=1-\ang{i,j'}$.
By the previous lemma, we have
\begin{align*}
r_k(\sum_{n+n'=N}(-1)^{n'}\pi_i^{n'p(j)+\binom{n'}{2}}\theta_i^{(n)}\theta_j\theta_i^{(n')})&=0,
\quad \text{ for } k\neq i,j.
\end{align*}
In addition, we have
\begin{align*}
r_j & \parens{\sum_{n+n'=N} (-1)^{n'} \pi_i^{n'p(j)+\binom{n'}{2}}
\theta_i^{(n)}\theta_j\theta_i^{(n')}} \\
&=\sum_{n+n'=N}(-1)^{n'}\pi_i^{n'p(j)
+\binom{n'}{2}}q_i^{n'\ang{i,j}}\pi_i^{n'p(j)}\bbinom{N}{n'}_i\theta_i^{(N)}\\
&=\theta_i^{(N)}\sum_{t=0}^N (-1)^{t}\pi_i^{\binom{t}{2}}(q_i)^{t(1-N)}\bbinom{N}{t}_i.
\end{align*}
By Condition \ref{subsec:Cartan}(e), $1-N\in 2\Z$ if $i$ is odd, so
in any case, the right-hand side of the last equation is
\begin{align*}
&=\theta_i^{(N)}\sum_{t=0}^N (-1)^{t}\pi_i^{\binom{t}{2}}(\pi_i q_i^{-1})^{t(N-1)}\bbinom{N}{t}_i =0,
\end{align*}
 where the last equality follows from \eqref{eq:binomidf}. Finally,
\begin{align*}
r_i &\parens{\sum_{n+n'=N}(-1)^{n'}\pi_i^{n'p(j)+\binom{n'}{2}}\theta_i^{(n)}
\theta_j\theta_i^{(n')}}
 \\
&=\sum_{n+n'=N} (-1)^{n'}\pi_i^{n'p(j)+\binom{n'}{2}}q_i^{n'}\pi_i^{n'+p(j)}
\theta_i^{(n-1)}\theta_j\theta_i^{(n')}\\
&\hspace{.5 in}+\sum_{n+n'=N} (-1)^{n'} 
\pi_i^{n'p(j)+\binom{n'}{2}}q_i^{n'-1}\theta_i^{(n)}
\theta_j\theta_i^{(n'-1)} 
 \\
&=\sum_{t=0}^{N-1} (-1)^{t}\pi_i^{tp(j)+p(j)
 +\binom{t+1}{2}} q_i^{t}\theta_i^{(N-1-t)}\theta_j\theta_i^{(t)}\\
&\hspace{.5 in}-\sum_{t=0}^{N-1} (-1)^{t}\pi_i^{(t+1)p(j)+\binom{t+1}{2}}q_i^{t}\theta_i^{(N-1-t)}\theta_j\theta_i^{(t)}\\
 &=0.
\end{align*}

Now Proposition~\ref{prop:quantserre} follows by Lemma \ref{lem:derivkill}.
\qed

Note that the bar map $\ov{\phantom{x}}$ on $\ff$ may not be
well-defined when the datum is not consistent. For example, consider
the case $(I,\cdot)$ has $i,j\in I_\zero$ with $i\cdot j=-1$, hence
$d_i=d_j=1$. Then the calculations above hold; that is,
$s(\theta_i,\theta_j)
:=\theta_i^{(2)}\theta_j-\theta_i\theta_j\theta_i+\theta_j\theta_i^{(2)}=0$;
however, since $\ov{[2]_i}=\pi[2]_i$, it is easy to see that
$\ov{s(\theta_i,\theta_j)}\notin \mathcal{I}$.


Let $_\curlyA\ff$ be $\curlyA^\pi$-subalgebra of $\ff$ generated by
the elements $\theta_i^{(s)}$ for various $i\in I$ and $s\in\Z$.
Since the generators $\theta_i^{(s)}$ are homogeneous, we have
$_\curlyA\ff=\bigoplus_\nu\,{}_\curlyA\ff_\nu$ where $\nu$ runs over
$\N[I]$ and $_\curlyA\ff_\nu= {}_\curlyA\ff \cap\ff_\nu$.

\section{The quantum covering and super groups}   \label{sec:algebraU}

In this section we give the definition of the quantum covering
group $\UU$ as a Hopf superalgebra, which specializes at $\pi=-1$
to a new variant of a quantum supergroup. We show that $\UU$ admits a
triangular decomposition $\UU =\UU^-\UU^0\UU^+$ with
positive/negative parts isomorphic to the algebra $\bf f$. The
novelty here is that $\UU^0$ contains some new generators $J_i (i\in
I)$ which allow us to construct integrable modules in full
generality.

\subsection{The algebras $\UUpr$  and $\UU$}

\label{sec:Udef} Assume that a root datum $(Y,X, \ang{\,,\,})$
of type $(I,\cdot)$ is given.
Consider the associative $\Qq^\pi$-superalgebra $\UUpr$ (with $1$)
defined by the generators
\[\
E_i\quad(i\in I),\quad F_i\quad (i\in I), \quad J_{\mu}\quad (\mu\in
Y),\quad K_\mu\quad(\mu\in Y),
 \]
where the parity is given by $p(E_i)=p(F_i)=p(i)$ and
$p(K_\mu)=p(J_\mu)=0$, subject to the relations (a)-(f) below for
all $i, j \in I, \mu, \mu'\in Y$:
\[
\tag{a} K_0=1,\quad K_\mu K_{\mu'}=K_{\mu+\mu'},
\]
\[
\tag{b} J_{2\mu}=1, \quad J_\mu J_{\mu'}=J_{\mu+\mu'},
\]
\[
\tag{c} J_\mu K_{\mu'}=K_{\mu'}J_{\mu},
\]
\[
\tag{d} K_\mu E_i=q^{\ang{\mu,i'}}E_iK_{\mu}, \quad
J_{\mu}E_i=\pi^{\ang{\mu,i'}} E_iJ_{\mu},
\]
\[
\tag{e} \; K_\mu F_i=q^{-\ang{\mu,i'}}F_iK_{\mu}, \quad J_{\mu}F_i=
\pi^{-\ang{\mu,i'}} F_iJ_{\mu},
\]
\[
\tag{f} E_iF_j-\pi^{p(i)p(j)}
F_jE_i=\delta_{i,j}\frac{\wtd{J}_{i}\wtd{K}_i-\wtd{K}_{-i}}{\pi_iq_i-q_i^{-1}},
\]
where for any element $\nu=\sum_i \nu_i i\in \Z[I]$ we have set
$\wtd{K}_\nu=\prod_i K_{d_i\nu_i i}$, $\wtd{J}_\nu=\prod_i
J_{d_i\nu_i i}$. In particular, $\wtd{K}_i=K_{d_i i}$,
$\wtd{J}_i=J_{d_i i}$. (Under Condition~ \ref{subsec:Cartan}(e),
$\wtd{J}_i=1$ for $i\in I_\zero$ while $\wtd{J}_i=J_i$ for $i\in
I_\one$.)

We also consider the associative $\Qq^\pi$-algebra $\UU$ (with $1$)
defined by the generators
\[\
E_i\quad(i\in I),\quad F_i\quad (i\in I), \quad J_{\mu}\quad
(\mu\in Y),\quad K_\mu\quad(\mu\in Y)
\]
and the relations (a)-(f) above, together with the additional
relations
\[
\tag{g}\text{for any } f(\theta_i:i\in I)\in \mathcal{I} \text{,
}f(E_i:i\in I)=f(F_i:i\in I)=0.
\]
The algebra $\UU$ will be called the {\em quantum covering group} of
type $(I,\cdot)$.

{From} (g), we see that there are well-defined algebra homomorphisms
$\ff\rightarrow \UU$, $x\mapsto x^+$  (with image denoted
by $\UU^+$) and $\ff\rightarrow \UU$, $x\mapsto x^-$
(with image denoted by $\UU^-$)  such that
$E_i=\theta_i^+$ and $F_i=\theta_i^-$ for all $i\in I$.
Clearly, there are well defined algebra homomorphisms $\ffpr\rightarrow\UUpr$
with the aforementioned properties.

(In terms of standard notations used in some other quantum group
literature, it is understood that $K_\mu=q^\mu$ and $K_i=q^{h_i}$.
It is instructive to see our new generators $J$'s can be understood
in the same vein as
$J_\mu=\pi^\mu$ and $J_i=\pi^{h_i}$.)

For any $p\geq 0$, we set $E_i^{(p)}=(\theta_i^{(p)})^+$ and
$F_i^{(p)}=(\theta_i^{(p)})^-$.

\begin{example}\label{example:rankoneqg}
In the case $I=I_\one=\set{I}$, we can identify $Y=X=\Z$ with $i=1\in Y$,
$i'=2\in X$, and $\ang{\mu,\lambda}=\mu\lambda$. Then $\UU$ is the
$\Qq^\pi$-algebra generated by $E$,$F$,$K$,$J$ such that
\[JK=KJ,\quad JE=EJ,\quad JF=FJ,\quad J^2=1,\]
\[KEK=q^2E,\quad KFK=q^{-2}F,\]
\[EF-\pi FE=\frac{JK-K^{-1}}{\pi q-q^{-1}}.\]
Note that the quotient algebras $\UU/((J\pm 1)\UU)$ are isomorphic to the 
two variants of the quantum group $\UU_q(\osp(1|2))$ defined
in \cite{CW}. 
\end{example}

\subsection{Properties of $\UU$}

By inspection, 
there is a unique algebra automorphism (of order 4) $\omega:
\UUpr\rightarrow \UUpr$ such that
\[\omega(E_i)=\pi_i \wtd{J}_{i}F_i,\quad \omega(F_i)= E_i,
\quad \omega(K_\mu)=K_{-\mu}, \quad \omega(J_{\mu})=J_{\mu}\]
 for $i\in I$, $\mu\in Y$. 
We have $\omega(x^+)=\pi_{|x|}\tilde{J}_{|x|} x^-$
and $\omega(x^-)=x^+$ for all $x\in \ff$,
and thus the same formula defines
a unique algebra automorphism $\omega:\UU\rightarrow \UU$.

Similarly, there is a unique 
isomorphism of $\Qq^\pi$-vector spaces $\sigma:
\UUpr\rightarrow \UUpr$ such that
\[\sigma(E_i)=E_i,\quad \sigma(F_i)=\pi_i \tilde{J}_{i} F_i,
\quad \sigma(K_\mu)=K_{-\mu}, \quad \sigma(J_{\mu})=J_{\mu}\] for
$i\in I$, $\mu\in Y$ such that $\sigma(uu')=\sigma(u')\sigma(u)$ for
$u,u'\in\UU$. We have
\begin{equation} \label{eq:sigmadef}
\sigma(x^+)= \sigma(x)^+, \qquad
\sigma(x^-)=\pi_{|x|}\tilde{J}_{|x|} \sigma(x)^-, \quad \forall x\in
\ff.
\end{equation}
Again, this implies that the same formula defines a unique
algebra automorphism $\sigma:\UU\rightarrow \UU$.
Note that $\sigma$ on $\UU^+$ matches exactly $\sigma$ on $\bf f$,
but $\sigma$ on $\UU^-$ looks quite different from  $\sigma$ on $\bf
f$ (in contrast to the quantum group setting \cite{Lu}).

\begin{lem}[Comultiplication]
There is a unique algebra homomorphism $\Delta:\UUpr\rightarrow
\UUpr\otimes \UUpr$ (resp. $\Delta:\UU\rightarrow \UU\otimes \UU$)
where $\UUpr\otimes \UUpr$ (resp. $\UU\otimes \UU$) is regarded as a
superalgebra in the standard way, defined by
\begin{align*}
\Delta(E_i) &= E_i\otimes 1 + \wtd{J}_i\wtd{K}_i\otimes E_i\quad
(i\in I),
 \\
\Delta(F_i) &= F_i\otimes \wtd{K}_{-i} + 1\otimes F_i\quad (i\in
I),
 \\
\Delta(K_\mu) &=K_\mu\otimes K_\mu\quad (\mu\in Y),
 \\
\Delta(J_{\mu}) &=J_{\mu}\otimes J_{\mu}\quad (\mu\in Y).
\end{align*}
\end{lem}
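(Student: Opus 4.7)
The plan is to handle uniqueness and existence separately. Uniqueness is immediate: since $\UUpr$ (resp.~$\UU$) is generated as an algebra by the $E_i, F_i, K_\mu, J_\mu$, any algebra homomorphism out of it is determined by its values on these generators. So the whole content of the lemma is existence, i.e., showing that the prescribed assignments extend to a well-defined algebra map.

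For existence on $\UUpr$, I would use the presentation by generators and relations (a)--(f) and verify that the proposed images satisfy each of these relations in $\UUpr\otimes\UUpr$, where the multiplication obeys $(a\otimes b)(c\otimes d)=\pi^{p(b)p(c)}ac\otimes bd$. Relations (a)--(c) are immediate, since $\Delta(K_\mu)=K_\mu\otimes K_\mu$ and $\Delta(J_\mu)=J_\mu\otimes J_\mu$ are the diagonal embeddings of abelian subalgebras. Relations (d) and (e) are straightforward: for instance, in computing $\Delta(K_\mu)\Delta(E_i)$ versus $q^{\ang{\mu,i'}}\Delta(E_i)\Delta(K_\mu)$, each tensor factor is a $K_\mu$-times-$E_i$ commutation, and the super sign $\pi^{p(b)p(c)}$ is trivial because $K_\mu$ is even. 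The one serious check is relation (f): I would expand
\[
\Delta(E_i)\Delta(F_j)-\pi^{p(i)p(j)}\Delta(F_j)\Delta(E_i)
\]
into eight terms, cancel the two off-diagonal $E_i\otimes F_j$ and $F_j\otimes E_i$--type terms using $\pi^{2p(i)p(j)}=1$, and collect the remaining terms as $(E_iF_j-\pi^{p(i)p(j)}F_jE_i)\otimes \wtd{K}_{-j}+\wtd{J}_i\wtd{K}_i\otimes(E_iF_j-\pi^{p(i)p(j)}F_jE_i)$ plus a ``cross" piece involving $\wtd{J}_i\wtd{K}_i F_j\otimes E_i\wtd{K}_{-j}$ and $F_j\wtd{J}_i\wtd{K}_i\otimes\wtd{K}_{-j}E_i$. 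Using (d)--(e) to move $K$'s past $F_j$ and $E_i$, the cross piece cancels, and the two remaining pieces combine (via (f) applied in each tensor factor) to yield $\delta_{ij}\bigl(\wtd{J}_i\wtd{K}_i\otimes\wtd{J}_i\wtd{K}_i-\wtd{K}_{-i}\otimes\wtd{K}_{-i}\bigr)/(\pi_iq_i-q_i^{-1})=\delta_{ij}(\Delta(\wtd{J}_i\wtd{K}_i)-\Delta(\wtd{K}_{-i}))/(\pi_iq_i-q_i^{-1})$, as required.

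For the passage from $\UUpr$ to $\UU$, I would additionally verify relation (g): that $f(\Delta(E_i))=0=f(\Delta(F_i))$ whenever $f\in\mathcal{I}$. Here the natural strategy is to show by induction on degree (or directly from the coproduct $r:\ffpr\rightarrow \ffpr\otimes\ffpr$) that for every homogeneous $x\in\ffpr$ with $r(x)=\sum x_1\otimes x_2$, the formula
\[
\Delta(x^+)=\sum \pi^{p(x_1)p(x_2)}q^{|x_1|\cdot|x_2|\text{-factor}}\,(x_1)^+\wtd{J}_{|x_2|}\wtd{K}_{|x_2|}\otimes (x_2)^+
\]
holds (with an analogous formula for $x^-$), the $q,\pi$-factors being exactly those dictated by the super tensor-product multiplication; this is proved by checking the formula on generators and then using induction on length of monomials, mimicking \cite[\S 3.1]{Lu}. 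Since $r(\mathcal{I})\subseteq \mathcal{I}\otimes\ffpr+\ffpr\otimes\mathcal{I}$ (established earlier in the excerpt), this formula implies $\Delta(f(E_i))=0$ for $f\in\mathcal{I}$, and similarly on the $F$-side, so $\Delta$ descends to $\UU$.

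The main obstacle is the sign/weight bookkeeping in relation (f): several of the eight expanded terms carry prefactors of $\pi^{p(i)p(j)}$ together with $q$-commutation factors coming from moving $K$'s past $E$'s and $F$'s, and one must check that they pair up correctly so that all pieces other than the diagonal $i=j$ contribution vanish. Once this computation is done cleanly, both the algebra homomorphism property for $\UUpr$ and (via the descent argument) for $\UU$ follow without further surprises.
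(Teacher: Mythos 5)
Your proposal is correct and follows essentially the same route as the paper's proof: verify relations (a)--(f) directly, with (f) being the one substantive expansion, and then use $r(\mathcal{I})\subseteq\mathcal{I}\otimes\ffpr+\ffpr\otimes\mathcal{I}$ to deduce that $\Delta$ kills the Serre ideal and hence descends to $\UU$. The paper packages the descent step slightly more cleanly by introducing the algebra maps $j^\pm:\ffpr\otimes\ffpr\rightarrow\UUpr\otimes\UUpr$, $j^+(x\otimes y)=x^+\wtd{J}_{|y|}\wtd{K}_{|y|}\otimes y^+$ and $j^-(x\otimes y)=x^-\otimes\wtd{K}_{-|x|}y^-$, so that $\Delta(x^+)=j^+r(x)$ and $\Delta(x^-)=j^-\ov{r}(x)$ with no extra prefactors (in particular your posited $\pi^{p(x_1)p(x_2)}q^{\cdots}$ factors in $\Delta(x^+)$ do not actually appear), rather than running a fresh induction, but this is a cosmetic rather than substantive difference.
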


\begin{proof}
The relations \ref{sec:Udef} (a)-(c) are trivial to verify.
For the relation (d), we have
\[
\Delta(E_i)\Delta(F_j)=E_iF_j\otimes
\wtd{K}_{-j}+\wtd{J}_i\wtd{K}_i\otimes E_iF_i + E_i\otimes
F_j+\pi^{p(i)p(j)}\wtd{J}_i\wtd{K}_iF_j\otimes E_i\wtd{K}_{-j},
\]
\[
\Delta(F_j)\Delta(E_i)=F_iE_j\otimes
\wtd{K}_{-j}+\wtd{J}_i\wtd{K}_i\otimes F_jE_i +
\pi^{p(i)p(j)}E_i\otimes F_j+F_j\wtd{J}_i\wtd{K}_i\otimes
\wtd{K}_{-j}E_i.
\]
So using the fact that $F_j\wtd{K}_i\otimes
\wtd{K}_{-j}E_i=\wtd{K}_iF_j\otimes E_i\wtd{K}_{-j}$, we have
\begin{align*}
\Delta(E_i) & \Delta(F_j)-\pi^{p(i)p(j)}\Delta(F_j)\Delta(E_i)\\
 &=(E_iF_j-\pi^{p(i)p(j)}F_jE_i)\otimes
  \wtd{K}_{-j}+\wtd{J}_i\wtd{K}_i\otimes (E_iF_j-\pi^{p(i)p(j)}F_jE_i)\\
&=\delta_{i,j}\parens{\frac{\wtd{J}_i\wtd{K}_i-\wtd{K}_{-i}}{\pi_iq_i-q_i^{-1}}}
\otimes\wtd{K}_{-j}
+\wtd{J}_i\wtd{K}_{i}\otimes
 \parens{\delta_{i,j}\frac{\wtd{J}_i\wtd{K}_i-\wtd{K}_{-i}}{\pi_iq_i-q_i^{-1}}}\\
&=\delta_{i,j}\frac{\Delta(\wtd{J}_i)\Delta(\wtd{K}_i)
 -\Delta(\wtd{K}_{-i})}{\pi_iq_i-q_i^{-1}}.
\end{align*}

Finally, define maps $j^\pm:\ffpr\otimes\ffpr\rightarrow
\UUpr\otimes \UUpr$ given by
\[
j^+(x\otimes y)=x^+ \wtd{J}_{|y|}\wtd{K}_{|y|}\otimes y^+, \qquad
j^-(x\otimes y)=x^-\otimes\wtd{K}_{-|x|}y^-.
\]
Then by construction, these maps are algebra homomorphisms, and
satisfy \[j^+r(x)=\Delta(x^+),\quad j^-\ov{r}(x)=\Delta(x^-).\] Since
$r,\ov{r}$ factor through $\ff$, so do $j^+r$ and $j^-\ov{r}$
implying that \[f(\Delta(E_i))=f(\Delta(F_i))=0\] for all
$f(\theta_i:i\in I)\in \mathcal{I}$.
\qed\end{proof}

The previous proof shows that $j^+r(x)=\Delta(x^+)$ and $j^-
\ov{r}(x)=\Delta(x^-)$, so in particular we have
\begin{align*}
\Delta(x^+) &=\sum x_1^+ \wtd{J}_{|x_2|}\wtd{K}_{|x_2|}\otimes
x_2^+,
 \\
\Delta(x^-) &=\sum \pi^{p(x_1)p(x_2)}(\pi q)^{-|x_1|\cdot
|x_2|}x_2^- \otimes\wtd{K}_{-|x_2|} x_1^-,
\end{align*}
for $r(x)=\sum x_1\otimes x_2$. In particular, this yields the
formulas
\begin{align*}
\Delta(E_i^{(p)})
 &=\sum_{p'+p''=p}q_i^{p'p''}\wtd{J}_i^{p''}E_i^{(p')}\wtd{K}_i^{p''}\otimes E_i^{(p'')}, \\
\Delta(F_i^{(p)})
 &=\sum_{p'+p''=p}(\pi_i q_i)^{-p'p''}F_i^{(p')}\otimes\wtd{K}_i^{-p'}
 F_i^{(p'')}.
\end{align*}

\begin{prop}\label{prop:EFcommutation}
For $x\in \ffpr$ and $i\in I$, we have (in $\UUpr$)
\vspace{.1in}
\begin{enumerate}
\item[(a)] $\displaystyle
x^+F_i-\pi_i^{p(x)}F_ix^+=\frac{\ri(x)^+\wtd{J}_i\wtd{K}_i-\wtd{K}_{-i}\,\,
\pi_i^{p(x)-p(i)}\, \ir(x)^+}{\pi_iq_i-q_i^{-1}},
$
\vspace{.1in}
\item[(b)] $\displaystyle
E_ix^--\pi_i^{p(x)}x^-E_i=\frac{\wtd{J}_i\wtd{K}_{i}\,\,
\ir(x)^- -\pi_i^{p(x)-p(i)}\ri(x)^-\wtd{K}_{-i}}{\pi_iq_i-q_i^{-1}}.
$
\end{enumerate}
\end{prop}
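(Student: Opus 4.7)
The plan is to prove (a) by induction on the height $\height(|x|)$; part (b) follows by an identical induction (or, alternatively, by applying the anti-involution $\sigma$ to (a), using $\sigma(F_i)=\pi_i\tJ_iF_i$, $\sigma(x^+)=\sigma(x)^+$, and the identity $\ri\sigma=\sigma\ir$ established earlier). The base $x=1$ is trivial. For $x=\theta_j$ the LHS of (a) is $E_jF_i-\pi^{p(i)p(j)}F_iE_j$, which by defining relation \ref{sec:Udef}(f) equals $\delta_{ij}(\tJ_i\tK_i-\tK_{-i})/(\pi_iq_i-q_i^{-1})$; since $\ri(\theta_j)=\ir(\theta_j)=\delta_{ij}$ and $\pi_i^{p(j)-p(i)}=1$ when $j=i$, this matches the RHS.

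For the inductive step, I assume (a) holds for $x$ and $y$. Applying the hypothesis first to $y^+F_i$, then left-multiplying by $x^+$, then applying the hypothesis to the resulting $x^+F_i$ term, the $\pi_i^{p(xy)}F_ix^+y^+$ contributions cancel and I obtain
\begin{align*}
(xy)^+F_i-\pi_i^{p(xy)}F_i(xy)^+ &= \pi_i^{p(y)}\frac{\ri(x)^+\tJ_i\tK_i-\tK_{-i}\pi_i^{p(x)-p(i)}\ir(x)^+}{\pi_iq_i-q_i^{-1}}y^+ \\
&\quad + x^+\frac{\ri(y)^+\tJ_i\tK_i-\tK_{-i}\pi_i^{p(y)-p(i)}\ir(y)^+}{\pi_iq_i-q_i^{-1}}.
\end{align*}
Using \ref{sec:Udef}(d)-(e) together with the identity $\ang{d_i i,\nu'}=i\cdot\nu$ coming from \ref{subsec:rootdatum}(c), I commute $\tJ_i\tK_i$ past $y^+$ (picking up $\pi^{i\cdot|y|}q^{i\cdot|y|}$) and $\tK_{-i}$ past $x^+$ (picking up $q^{-i\cdot|x|}$). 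Expanding the target right-hand side for $xy$ via the twisted Leibniz rules $\ri(xy)=\pi^{p(y)p(i)}q^{|y|\cdot i}\ri(x)y+x\ri(y)$ and $\ir(xy)=\ir(x)y+\pi^{p(x)p(i)}q^{|x|\cdot i}x\ir(y)$ produces four terms that I expect to match the four above term-by-term.

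The main obstacle is the $\pi$-bookkeeping: matching the $\ri(x)^+y^+$ coefficients reduces to the identity $\pi_i^{p(y)}\pi^{i\cdot|y|}=\pi^{p(y)p(i)}$, i.e.\ $\pi^{i\cdot|y|}=1$. This is supplied by the super Cartan axiom \ref{subsec:Cartan}(d), which forces $i\cdot j\in 2\Z$ whenever $i\in I_\one$ (and symmetrically when $j\in I_\one$); the even-even case is handled either by the same axiom under consistency or by the fact that the compensating $\pi$-powers from the two commutations $\tJ_i\tK_i\,y^+$ and $\tK_{-i}\,x^+$ combine with $\pi_i^{p(y)-p(i)}$, $\pi_i^{p(x)-p(i)}$ to produce precisely the Leibniz prefactors. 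The analogous checks for the $\ir$-terms are identical, and part (b) is obtained by running the same induction with the coproduct $\Delta(F_i)=F_i\otimes\tK_{-i}+1\otimes F_i$ in place of $\Delta(E_i)$.
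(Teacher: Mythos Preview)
Your inductive strategy for (a) is exactly the paper's: assume the formula for $x'$ and $x''$, expand $(x'x'')^+F_i$ by applying the hypothesis twice, and match the four resulting terms against the Leibniz expansions of $\ri(x'x'')$ and $\ir(x'x'')$. The paper carries out the same term-by-term comparison without further comment.

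Two points deserve attention. First, your parenthetical route to (b) via $\sigma$ does not work: $\sigma$ fixes $E_i$ and sends $F_i\mapsto \pi_i\tJ_iF_i$, so it does not exchange the $E$- and $F$-sides of the identity. The paper instead applies the automorphism $\omega^{-1}$ (recall $\omega(E_i)=\pi_i\tJ_iF_i$, $\omega(F_i)=E_i$, $\omega(K_\mu)=K_{-\mu}$) directly to (a), obtaining
\[
\pi_i\tJ_ix^-E_i-\pi_i^{p(x)-1}\tJ_iE_ix^-
=\frac{\ri(x)^-\tJ_i\tK_{-i}-\tK_i\,\pi_i^{p(x)-p(i)}\ir(x)^-}{\pi_iq_i-q_i^{-1}},
\]
and then multiplies through by $\pi_i^{p(x)-1}\tJ_i$. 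This is shorter than repeating the induction.

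Second, your diagnosis of the $\pi$-bookkeeping is accurate: matching the $\ri(x)y$ term does reduce to $\pi^{i\cdot|y|}=1$, and axiom~(d) handles the case $i\in I_\one$ (hence by symmetry any pair with an odd index). But your fallback for the even--even case, that ``the compensating $\pi$-powers combine to produce the Leibniz prefactors,'' is not correct: when $i,j\in I_\zero$ the factor $\pi^{i\cdot j}$ from commuting $\tJ_i$ past $E_j$ is uncompensated, and nothing in axioms (a)--(d) forces $i\cdot j$ to be even. What is actually needed is condition~(f), i.e.\ $i\cdot j\in 2\Z$ for all $i,j$, which the paper derives from consistency~(e). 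The paper's own write-up passes over this silently, so you are right to flag it; just recognize that it is the consistency hypothesis, not a cancellation, that closes the gap.
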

\begin{proof}

Assume that (a) is known for $x'$ and $x''$; we shall show it holds
for $x=x'x''$. Let $y'=(x')^+$, $_i y'=\ir(x')^+$ and similarly for
$\ri$, $x''$ and $x$.
\begin{align*}
 yF_i&=\pi_i^{p(x'')}y'F_iy''+\frac{y' y''_i \wtd{J}_i
 \wtd{K}_i-y'\wtd{K}_{-i}\,\, \pi_i^{p(x'')-p(i)}\,{}_iy''}{\pi_iq_i-q_i^{-1}}\\
&=\pi_i^{p(x'x'')}F_iy+\pi_i^{p(x'')}\frac{y'_i \wtd{J}_i
 \wtd{K}_i\, y''-\wtd{K}_{-i} \pi_i^{p(x')-p(i)}\,{}_iy' y''}{\pi_iq_i-q_i^{-1}}
  \\ &
 \quad +\frac{y'\,y''_i\wtd{J}_i\wtd{K}_i-y'\wtd{K}_{-i} \pi_i^{p(x'')-p(i)}\,{}_iy''}{\pi_iq_i-q_i^{-1}}\\
&=\pi^{p(x'x'')p(i)}F_i\,y+\frac{y_i\wtd{J}_i\wtd{K}_i
 -\wtd{K}_{-i}\pi_i^{p(x)-p(i)}\, {}_iy}{\pi_iq_i-q_i^{-1}}.
\end{align*}
Since (a) holds for the generators, it holds for all $x\in\ffpr$.

If we apply $\omega^{-1}$, we obtain
\[
\pi_i\wtd{J}_ix^-E_i-\pi_i^{p(x)-p(i)}
\wtd{J}_iE_ix^-=\frac{\ri(x)^-\wtd{J}_i\wtd{K}_{-i}-\wtd{K}_{i}\,\,
\pi_i^{p(x)-p(i)}\,\ir(x)^-}{\pi_iq_i-q_i^{-1}},
\]
and multiplying both sides by $\pi_i^{p(x)-p(i)} \wtd{J}_i$
establishes (b).
\qed\end{proof}


We record the following formulas for further use.
\begin{lem}\label{L:commutation} \cite[Lemma~2.8]{CW}
For any $N,M\geq 0$ we have in $\UU$ or $\UUpr$
\begin{align*}
&E_i^{(N)}F_i^{(M)}=\sum_t \pi_i^{MN-\binom{t+1}{2}} F_i^{(M-t)}
 \bbinom{\tK_i;2t-M-N}{t}_iE_i^{(N-t)}, \\
&F_i^{(N)}E_i^{(M)}=\sum_t (-1)^t \pi_i^{(M-t)(N-t)-t^2}
 E_i^{(M-t)}\bbinom{\tK_i;M+N-(t+1)}{t}_iF_i^{(N-t)}, \\
&E_i^{(N)}F_j^{(M)}=\pi^{MNp(i)p(j)} F_j^{(M)}E_i^{(N)}\quad\text{if }i\neq j,
\end{align*}
where
\[
\bbinom{\tK_i;a}{t}_i=\prod_{s=1}^t\frac{(\pi_iq_i)^{a-s+1} \tJ_i
\tK_i-q_i^{s-a-1}\tK_{-i}}{(\pi_iq_i)^s-q_i^{-s}}.
\]
\end{lem}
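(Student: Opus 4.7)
Part (c) falls out of relation (f) in Section~\ref{sec:Udef}: for $i\neq j$ the Kronecker delta vanishes and one is left with $E_iF_j=\pi^{p(i)p(j)}F_jE_i$. Iterating this commutation across all $NM$ pairs of generators in $E_i^NF_j^M$ and dividing by $[N]_i^![M]_j^!$ yields the claimed identity.

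Parts (a) and (b) involve only the generators $E_i,F_i,\tK_{\pm i},\tJ_i$ attached to a single $i\in I$, so I would work inside the rank-one subalgebra $\UU_i\subset \UU$ they generate. After identifying $\UU_i$ (allowing for the scalar $d_i$) with the rank-one quantum covering group of Example~\ref{example:rankoneqg}, the relations of $\UU_i$ coincide with those studied in \cite{CW}, and the identities (a) and (b) are precisely \cite[Lemma~2.8]{CW}; invoking that result is the cleanest finish.

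For a self-contained verification of (a), I would induct on $M$ with $N$ arbitrary. The base case $M=1$ comes from applying Proposition~\ref{prop:EFcommutation}(a) to $x=\theta_i^{(N)}$: from \eqref{eq:derivcoprod} and Lemma~\ref{lem:divpowcoprod}(a) one computes $\ri(\theta_i^{(N)})=\ir(\theta_i^{(N)})=q_i^{N-1}\theta_i^{(N-1)}$, and after moving $\tK_{-i}$ past $E_i^{(N-1)}$ using $\tK_{-i}E_i^{(s)}=q_i^{-2s}E_i^{(s)}\tK_{-i}$, the formula lands exactly on the $M=1$ instance of (a). For the inductive step, write $F_i^{(M+1)}=F_iF_i^{(M)}/[M+1]_i$, apply the hypothesis to $E_i^{(N)}F_i^{(M)}$, commute the residual $F_i$ past each $E_i^{(N-t)}$ by the $M=1$ case, and reindex the resulting sum; the combinatorial identity required to match coefficients is a Pascal-type recursion for the symbols $\bbinom{\tK_i;a}{t}_i$ and reduces to \eqref{eq:binomide}.

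Part (b) is deduced from (a) either by applying the anti-automorphism $\sigma$ (with action on the generators recorded around \eqref{eq:sigmadef}) or by running the symmetric induction on $N$. The principal obstacle throughout is the careful bookkeeping of the $\pi$-signs arising from the braided multiplication and from the $\tJ_i$-twists concealed inside $\bbinom{\tK_i;a}{t}_i$; all such signs ultimately collapse thanks to $\pi^2=1$ and the identities of Section~\ref{sec:gaussids}.
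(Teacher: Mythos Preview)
Your proposal is correct and aligns with the paper's treatment: the paper does not prove this lemma at all but simply records the formulas with a citation to \cite[Lemma~2.8]{CW}, which is exactly the reference you invoke for parts (a) and (b). Your additional explanation of part (c) from relation~(f) and your inductive sketch for (a) go beyond what the paper provides, but they are sound and constitute a reasonable self-contained outline of the standard argument.
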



The coproduct $\Delta$ is coassociative; the verification is the same
as in the non-super case.
%
%
There is a unique algebra homomorphism $e:\UU\rightarrow \Qq^\pi$
satisfying $e(E_i)=e(F_i)=0$ and $e(J_\mu)= e(K_\mu)=1$ for all $i,\mu$.


Recall the bar involution $\ov{\phantom{x}}$ on $\Qq^\pi$ from
\eqref{eq:barinvqi}. This extends to a unique homomorphism of
$\Q$-algebras $\ov{\white{x}}:\UU\rightarrow\UU$ such that
\[\ov{E_i}=E_i,\quad \ov{F_i}=F_i,\quad \ov{J_{\mu}}=J_{\mu},\quad
\ov{K_\mu}=J_{\mu}K_{-\mu},
\]
and $\ov{fx}=\ov{f}\ov{x}$ for all $f\in\Qq^\pi$ and $x\in\UU$.


Let $_\curlyA\UU^{\pm}$ be the images of $_\curlyA\ff$ defined at the end of
\S\ref{subsec:Af}. We define $_\curlyA \UU$ to be the
$\curlyA^\pi$-subalgebra of $\UU$ generated by $E_i^{(t)}$,
$F_i^{(t)}$, $\left[K_i;a\atop t\right]_i$, $J_{\mu}$ and $K_\mu$,
for all $i\in I$, $\mu\in Y$ and positive integers $a \ge t$.

\subsection{Triangular decompositions for $\UUpr$ and $\UU$}

If $M', M$ are two $\UUpr$-modules, then $M'\otimes M$ is naturally
a $\UUpr\otimes \UUpr$-module; hence by restriction to $\UUpr$ under
$\Delta$, it is a $\UUpr$-module.

\begin{lem}
Let $\lambda\in X$. There is a unique $\UUpr$-module structure on
the $\Qq^\pi$-module $\ffpr$ such that for any homogeneous
$z\in\ffpr$, and $\mu\in Y$ and any $i\in I$, we have
\[
K_\mu\cdot z=q^{\ang{\mu,\lambda-|z|}} z,\quad
J_{\mu}\cdot z=\pi^{\ang{\mu,\lambda-|z|}} z,\quad F_i\cdot
z=\theta_i z,\quad E_i\cdot 1=0.
\]
\end{lem}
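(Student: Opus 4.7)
The plan is to establish uniqueness first and then construct the module via a simple recursion. Since the actions of $K_\mu$, $J_\mu$, and $F_i$ are prescribed, uniqueness amounts to pinning down $E_i$. Observe that $\ffpr$ is the free $\Qq^\pi$-superalgebra on $\{\theta_j:j\in I\}$, and $F_j\cdot 1=\theta_j$, so every element of $\ffpr$ is a $\Qq^\pi$-linear combination of vectors $F_{j_1}\cdots F_{j_n}\cdot 1$; applying the commutation relation \ref{sec:Udef}(f) inductively (to move $E_i$ past each $F_{j_k}$), together with $E_i\cdot 1=0$, forces a unique value for $E_i\cdot z$ on every such monomial.

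For existence, I take the forced form as the definition: set $E_i\cdot 1:=0$ and
\[
E_i\cdot(\theta_j z):=\pi^{p(i)p(j)}\theta_j(E_i\cdot z)+\delta_{ij}\frac{\tJ_i\tK_i\cdot z-\tK_{-i}\cdot z}{\pi_i q_i-q_i^{-1}},
\]
extended $\Qq^\pi$-linearly, where $\tJ_i\tK_i$ and $\tK_{-i}$ act on $z$ by the prescribed scalars of the weight space $\ffpr_{|z|}$. Because $\ffpr$ is free on the $\theta_j$'s, induction on the length of a leading monomial produces a well-defined $\Qq^\pi$-linear endomorphism of $\ffpr$. The actions of $K_\mu$, $J_\mu$, and $F_i$ are as in the statement.

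It remains to verify relations \ref{sec:Udef}(a)--(f). Relations (a)--(c) are immediate since $K_\mu,J_\mu$ act as scalars on each weight space. Relation (f) holds by construction. Relations (d) and (e) are established by induction on the length of $z$: the base case $z=1$ reduces to a trivial scalar identity; the inductive step compares $K_\mu E_i\cdot(\theta_j z)$ with $q^{\ang{\mu,i'}}E_i K_\mu\cdot(\theta_j z)$ by expanding both sides via the recursion, using the scalar commutation $K_\mu\theta_j=q^{-\ang{\mu,j'}}\theta_j K_\mu$ on the module, the induction hypothesis on $z$, and the fact that $K_\mu$ commutes with the Cartan elements $\tJ_i\tK_i,\tK_{-i}$. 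Relation (e) and the $J_\mu$-analogues proceed identically.

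The principal obstacle is routine sign- and $q$-power bookkeeping in the verification of (d) and (e): one must track the super-signs $\pi^{p(i)p(j)}$ and the $q$-weights shifted by the pairings $\ang{\mu,i'},\ang{\mu,j'}$ carefully through the induction. No structural difficulty arises beyond that already present in the Drinfeld--Jimbo case, and, notably, the consistency condition \ref{subsec:Cartan}(e) plays no role, since no bar involution enters this argument.
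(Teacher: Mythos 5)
Your proof is correct, and it takes a genuinely different (though related) route from the paper's. The paper defines the $E_i$-action in one stroke by a closed formula in terms of the twisted derivations $\ri$ and $\ir$:
\[
E_i\cdot z=\frac{-q_i^{\ang{i,\lambda}} \ri(z) + \pi_i^{p(z)-p(i)}
(\pi_i q_i)^{\ang{i, \lambda-|z|+i'}} \ir(z) }{\pi_iq_i-q_i^{-1}},
\]
which it observes is exactly what Proposition~\ref{prop:EFcommutation}(b) forces, and then asserts that a direct computation verifies the relations. You instead define $E_i$ by unwinding the relation~\ref{sec:Udef}(f) recursively on the length of a leading monomial, which has the virtue that (f) then holds \emph{by construction}, while (a)--(c) are trivial and (d), (e) come down to the observation that the recursion makes $E_i$ shift the $\N[I]$-grading by $-i$ (equivalently, the module weight by $+i'$), so the $K_\mu, J_\mu$ commutation is simply a weight statement; your ``inductive bookkeeping'' for (d)--(e) is correct but slightly heavier than needed. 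The trade-off is that the paper's explicit formula is more readily reusable (and makes the link to the derivations transparent, which is exploited elsewhere in the triangular decomposition arguments), whereas your recursion is more self-contained and does not presuppose the $\ri,\ir$ machinery. Both are sound proofs of the same lemma; uniqueness is handled the same way in both.
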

\begin{proof}
The uniqueness is immediate. To prove the existence, define
\[
E_i\cdot z=\frac{-q_i^{\ang{i,\lambda}} \ri(z) + \pi_i^{p(z)-p(i)}
(\pi_i q_i)^{\ang{i, \lambda-|z|+i'}} \ir(z) }{\pi_iq_i-q_i^{-1}}
\]
Note that this is essentially the formula prescribed by Proposition
\ref{prop:EFcommutation}. A straightforward computation shows that
this, along with the desired formulas for the $F$ and $K$ actions
define a $\UUpr$-module structure on $\ffpr$.
\qed\end{proof}

We denote this $\UUpr$-module by $\mathbf M_\lambda$ (which is a free
$\Qq^\pi$-module). Similarly, to an element $\lambda\in X$, we
associate a unique $\UUpr$-module structure on $\ffpr$ such that for
any homogeneous $z\in\ffpr$, any $\mu\in Y$ and any $i\in I$ we have
\[
K_\mu\cdot z=q^{\ang{\mu,-\lambda+|z|}} z,\quad
J_{\mu}\cdot z=\pi^{\ang{\mu,-\lambda+|z|}} z,\quad E_i\cdot
z=\theta_i z,\quad F_i\cdot 1=0.
\]
We denote this $\UUpr$-module by $\mathbf M_\lambda'$ (which is again a free
$\Qq^\pi$-module). We form the $\UUpr$-module 
$\mathbf M_\lambda'\otimes \mathbf M_\lambda$; we
denote the unit element of $\ffpr=\mathbf M_\lambda$ by $1$ 
and that of $\ffpr=\mathbf M_\lambda'$
by $1'$. Thus, we have the canonical element $1'\otimes 1\in
\mathbf M_\lambda'\otimes \mathbf M_\lambda$. We emphasize that 
$\mathbf M_\lambda'\otimes \mathbf M_\lambda$ is again free as 
a $\Qq^\pi$-module.

\begin{prop}
 \label{prop:U'triang}
Let $\Uz$ be the associative $\Qq^\pi$-algebra with $1$ defined by
the generators $K_\mu$, $J_{\mu}$ ($\mu\in Y$) and the relations in
\S\ref{sec:Udef} (a),(b). Then $\Uz$ is isomorphic to the group algebra of
$Y\times (Y/2Y)$ over $\Qq^\pi$. Moreover,
\begin{enumerate}
\item[(a)]
The $\Qq^\pi$-linear map $\ffpr\otimes \Uz\otimes
\ffpr\rightarrow\UUpr$ given by $u\otimes J_\nu K_\mu\otimes
w\mapsto u^-J_{\nu} K_\mu w^+$ is an isomorphism.
\item[(b)]
The $\Qq^\pi$-linear map $\ffpr\otimes \Uz\otimes
\ffpr\rightarrow\UUpr$ given by $u\otimes J_{\nu}K_\mu\otimes
w\mapsto u^+J_\nu K_\mu w^-$ is an isomorphism.
\end{enumerate}
\end{prop}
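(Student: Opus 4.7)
\emph{Structure of $\Uz$.} The defining relations \S\ref{sec:Udef}(a)--(c) are exactly those of the commutative group algebra $\Qq^\pi[Y]\otimes \Qq^\pi[Y/2Y]=\Qq^\pi[Y\times(Y/2Y)]$, yielding a canonical surjection onto $\Uz$. Its injectivity is checked by evaluation on the family $\{\mathbf M_\lambda\}_{\lambda\in X}$: the operator $K_\mu J_\nu$ acts on the cyclic vector $1\in\mathbf M_\lambda$ as the scalar $q^{\langle\mu,\lambda\rangle}\pi^{\langle\nu,\lambda\rangle}$, and nondegeneracy of the pairing $Y\times X\to\Z$ together with a Dedekind-style linear independence of characters renders the functions $\lambda\mapsto q^{\langle\mu,\lambda\rangle}\pi^{\langle\nu,\lambda\rangle}$ linearly independent over $\Qq^\pi$ for distinct $(\mu,[\nu])$.

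\emph{Surjectivity of $\phi$.} Any monomial in the generators $E_i,F_j,K_\mu,J_\nu$ of $\UUpr$ can be rewritten in the standard form $F\text{-monomial}\cdot J_\nu K_\mu\cdot E\text{-monomial}$: relations \S\ref{sec:Udef}(d)--(e) move all $K$'s and $J$'s to the middle at the cost of scalar factors, while (f) rewrites $E_iF_j$ as $\pi^{p(i)p(j)}F_jE_i$ plus (when $i=j$) a Cartan-type term in $\Uz$. Induction on the number of out-of-order $(E,F)$-pairs in a word then completes (a); part (b) is symmetric.

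\emph{Injectivity of $\phi$.} We test on the family $\{\mathbf M_{\lambda_1}'\otimes\mathbf M_{\lambda_2}\}$, extending the construction of \S\ref{sec:algebraU} to distinct parameters $\lambda_1,\lambda_2\in X$, at the cyclic vector $\mathbf 1:=1'\otimes 1$. A direct induction via $\Delta(E_i)=E_i\otimes 1+\tJ_i\tK_i\otimes E_i$ and $E_i\cdot 1=0$ yields $w^+\cdot\mathbf 1=w\otimes 1$ for every $w\in\ffpr$, and then $J_\nu K_\mu$ scales $w\otimes 1$ by $q^{\langle\mu,|w|+\lambda_2-\lambda_1\rangle}\pi^{\langle\nu,|w|+\lambda_2-\lambda_1\rangle}$. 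The central technical ingredient is the leading-term formula
\[
u^-\cdot(w\otimes 1)=\pi^{p(u)p(w)}(w\otimes u)+(\text{terms of strictly smaller right-factor weight}),
\]
proved by induction on $|u|$ via $\Delta(F_i)=F_i\otimes\tK_{-i}+1\otimes F_i$: the ``$1\otimes F_i$'' summand builds up $u$ on the right, producing the leading $w\otimes u$ contribution, while ``$F_i\otimes\tK_{-i}$'' yields only strictly lower right-factor weight corrections.

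Given a putative kernel element $\sum_\alpha c_\alpha(u_\alpha\otimes J_{\nu_\alpha}K_{\mu_\alpha}\otimes w_\alpha)$ with $u_\alpha,w_\alpha$ ranging over a basis of monomials in $\ffpr$, applying to $\mathbf 1$ and extracting the component in the bi-degree $(|w_\alpha|,|u_\alpha|)$ maximal in some total order refining the componentwise partial order on $\N[I]$ yields, for each fixed basis pair $(u,w)$ in that top bi-degree, the constraint
\[
\sum_\alpha c_\alpha\,q^{\langle\mu_\alpha,|w|+\lambda_2-\lambda_1\rangle}\pi^{\langle\nu_\alpha,|w|+\lambda_2-\lambda_1\rangle}=0\qquad\forall\,\lambda_1,\lambda_2\in X.
\]
The linear independence of characters established in the first part then forces $c_\alpha=0$ in the top bi-degree, and a descending induction on bi-degree finishes the proof. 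Part (b) follows either by the analogous argument with $\mathbf M_{\lambda_2}\otimes\mathbf M_{\lambda_1}'$ or by transferring (a) along the anti-automorphism $\sigma$. The main obstacle I foresee is the combination of the leading-term lemma, the careful bi-degree bookkeeping, and the crucial use of distinct parameters $\lambda_1\ne\lambda_2$ in the test modules---essential because with $\lambda_1=\lambda_2$ the action of $J_\nu K_\mu$ on vectors of the form $w\otimes 1$ is $\lambda$-independent, and thus insufficient to separate distinct elements of $\Uz$.
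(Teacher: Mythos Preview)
Your proof is correct and follows essentially the same route as the paper's: both establish surjectivity by reordering monomials via the defining relations, and both prove injectivity by acting on the tensor product $\mathbf M_{\lambda'}'\otimes\mathbf M_\lambda$ with two independent parameters, extracting a top-degree component to reduce to linear independence of the characters $\lambda\mapsto q^{\langle\mu,\lambda\rangle}\pi^{\langle\nu,\lambda\rangle}$. The only cosmetic differences are that the paper packages the top-degree extraction via the explicit expansion of $\Delta(b'^-)$ and a filtration by $\operatorname{ht}|b'|$ rather than your leading-term lemma, and it derives (b) from (a) using the automorphism $\omega$ (which sends $u^- J_\nu K_\mu w^+$ directly to the form $u^+ J_{\nu'} K_{\mu'} w^-$) rather than the anti-automorphism $\sigma$.
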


\begin{proof}
Note that (b) follows from (a) by applying $\omega$. As a
$\Qq^\pi$-module, $\UUpr$ is spanned by words in the $E_i$, $F_i$,
$K_\mu$, and $J_{\mu}$. By using the defining relations, we can
rewrite any word as a linear combination of words where the $F_i$
come before the $J_{\mu}$ and $K_\mu$, which come before the $E_i$,
thus the given map is surjective.

To prove the map is injective, let $\lambda,\lambda'\in X$, and
consider the module $\mathbf M_{\lambda'}'\otimes \mathbf M_\lambda$ 
described before. There is a
$\Qq^\pi$-linear map 
$\phi : \UUpr\rightarrow \mathbf M_{\lambda'}'\otimes \mathbf M_\lambda$
given by
$\phi(u)=u\cdot 1'\otimes 1$. Pick a $\Qq^\pi$-basis of $\ffpr$
consisting of homogeneous elements containing $1$. Assume that in
$\UUpr$ there is some relation of the form
$\sum_{b',\mu,b}c_{b',\mu,b}b'^-J_\nu K_\mu b^+ = 0$ and let $N$ be the
largest integer such that $\height |b'|=N$ and $c_{b',\mu, b}\neq 0$
for some $\mu, b$.

Then
\[
0=\phi\Big(\sum_{b',\mu,\nu,b}c_{b',\mu,\nu,b}b'^- J_{\nu}K_\mu
b^+\Big) =\sum_{b',\mu,\nu,b}c_{b',\mu,\nu,b}\Delta(b'^-
J_{\nu}K_\mu b^+)\cdot 1\otimes 1.
\]
Now
\[
\Delta(b'^-)=\sum_{b'_1, b'_2} g'(b',b'_1,b'_2)b'^-_1
\otimes\wtd{K}_{-|b'_1|} b'^-_2,
\]
\[
\Delta(b^+)=\sum_{b_1, b_2} g(b,b_1,b_2)b_1^+
\wtd{J}_{|b_2|}\wtd{K}_{|b_2|}\otimes b_2^+,
\]
so we have
\begin{align*}
0=& \sum \pi^{p(b'_2)p(b_1)}
c_{b',\mu,\nu,b}g(b,b_1,b_2)g'(b',b'_1,b'_2)b'^-_1  \times
 \\
 &
 \qquad\qquad\qquad\times
 J_{\nu}K_\mu b_1^+ \wtd{J}_{|b_2|}\wtd{K}_{|b_2|}\cdot 1' \otimes
\wtd{K}_{-|b'_1|} b'^-_2 J_{\nu} K_\mu b_2^+\cdot 1.
\end{align*}
If $b_2\neq 1$, then $b_2^+\cdot 1=0$ so we must have $b_2=1$ and
thus $b_1=b$. Therefore the expression reduces to
\[
0=\sum \pi^{p(b'_2)p(b)}c_{b',\mu,\nu,b}g'(b',b'_1,b'_2)b'^-_1
J_{\nu}K_\mu b^+\cdot 1' \otimes \wtd{K}_{-|b'_1|} b'^-_2
J_{\nu}K_\mu \cdot 1.
\]
By the definition of the module structure, this  becomes
\[
0=\sum
\pi^{p(b'_2)p(b)}c_{b',\mu,\nu,b}g'(b',b'_1,b'_2)\pi^{\ang{\nu,
\lambda-\lambda'+|b|}}q^{\ang{\mu, \lambda-\lambda'+|b|}}b'^-_1
\cdot b \otimes \wtd{K}_{-|b'_1|} b'_2.
\]
We can now project this equality onto the summand 
$\mathbf M_{\lambda'}'\otimes
\ffpr_\nu$ where $\height\, \nu = N$. Then by construction,
$|b'_2|\leq |b|$ and $\height |b'_2|=N$. Since $c_{b',\mu,b}=0$ if
$\height |b'|>N$, we must have $|b|=|b'_2|$ and thus $b'=b'_2$,
$b'_1=1$, so
\[
\sum \pi^{p(b')p(b)}c_{b',\mu,\nu,b}\pi^{\ang{\nu,
\lambda-\lambda'+|b|}}q^{\ang{\mu, \lambda-\lambda'+|b|}} b \otimes
b'=0.
\]
It follows that
\[
\sum_{\nu,\mu} c_{b',\mu,\nu,b}\pi^{\ang{\nu,
\lambda-\lambda'+|b|}}q^{\ang{\mu, \lambda-\lambda'+|b|}}=0
\]
for all choices of $\lambda,\lambda', \mu, b$ and $b'$ with $\height
|b'|=N$. Therefore $c_{b',\mu,\nu,b}=0$ for any $b'$ with $\height
|b'|=N$, contradicting the choice of $N$.
\qed\end{proof}

\begin{cor}
\begin{enumerate}
\item[(a)]
The $\Qq^\pi$-linear map $\ff\otimes \Uz\otimes \ff\rightarrow\UU$
given by $u\otimes J_{\nu}K_\mu\otimes w\mapsto u^-J_{\nu}K_\mu w^+$
is an isomorphism.
\item[(b)]
The $\Qq^\pi$-linear map $\ff\otimes \Uz\otimes \ff\rightarrow\UU$
given by $u\otimes K_\mu\otimes w\mapsto u^+J_{\nu}K_\mu w^-$ is an
isomorphism.
\end{enumerate}
\end{cor}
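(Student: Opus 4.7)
The plan is to deduce this corollary directly from Proposition~\ref{prop:U'triang} by passing to the quotient $\UU=\UUpr/\mathcal{J}$, where $\mathcal{J}$ is the two-sided ideal of $\UUpr$ generated by the elements $f(E_i)=f(\theta_i)^+$ and $f(F_i)=f(\theta_i)^-$ for $f\in\mathcal{I}$. By Proposition~\ref{prop:U'triang}(a), multiplication gives a $\Qq^\pi$-linear isomorphism $\mu_-\colon\ffpr\otimes\Uz\otimes\ffpr\xrightarrow{\sim}\UUpr$ sending $u\otimes J_\nu K_\mu\otimes w\mapsto u^-J_\nu K_\mu w^+$, and the claimed map of the corollary factors through the induced map $\ff\otimes\Uz\otimes\ff\rightarrow\UUpr/\mathcal{J}$. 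So it suffices to identify the preimage of $\mathcal{J}$ under $\mu_-$ with
\[
\mathcal{I}\otimes\Uz\otimes\ffpr \;+\; \ffpr\otimes\Uz\otimes\mathcal{I}.
\]

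For the forward inclusion, the generators $f(F_i)=\mu_-(f\otimes 1\otimes 1)$ and $f(E_i)=\mu_-(1\otimes 1\otimes f)$ of $\mathcal{J}$ obviously lie in the target. To see that the target is closed under left and right multiplication by all of $\UUpr$, it suffices by the triangular decomposition to check closure under multiplication by the elements $E_j$, $F_j$, $K_\mu$ and $J_\mu$. The $K_\mu, J_\mu$ case is immediate from relations \ref{sec:Udef}(d),(e). For multiplication by $F_j$ on the right and $E_j$ on the left, the key is to normalize by commuting them past an element of the opposite sign. This is precisely what Proposition~\ref{prop:EFcommutation} accomplishes: for $x\in\ffpr$,
\[
x^+F_j-\pi_j^{p(x)}F_jx^+ \;=\; \frac{r_j(x)^+\tJ_j\tK_j-\tK_{-j}\,\pi_j^{p(x)-p(j)}\,{}_jr(x)^+}{\pi_jq_j-q_j^{-1}},
\]
and the analog for $E_jx^-$. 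Since $\mathcal{I}$ is a two-sided ideal of $\ffpr$ that is moreover stable under the twisted derivations $r_j$ and ${}_jr$ (as noted after \eqref{eq:derivadjunct}), these commutation formulas show that $\mathcal{I}^+\cdot F_j$ and $E_j\cdot\mathcal{I}^-$ can be rewritten back in normal form with the ``sign-preserving'' factor still lying in $\mathcal{I}$. Iterating, one obtains that $\mu_-^{-1}(\mathcal{J})$ is contained in $\mathcal{I}\otimes\Uz\otimes\ffpr + \ffpr\otimes\Uz\otimes\mathcal{I}$.

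The reverse inclusion is straightforward: an element $f\otimes J_\nu K_\mu\otimes w$ with $f\in\mathcal{I}$ maps to $f(F_{\bullet})\cdot J_\nu K_\mu\cdot w^+\in\mathcal{J}$, and similarly on the other side. Combining both inclusions and quotienting, $\mu_-$ descends to the claimed isomorphism of (a). Part (b) then follows from (a) by applying the algebra automorphism $\omega\colon\UU\rightarrow\UU$, which interchanges $\UU^+$ and $\UU^-$ up to the invertible scalar factors $\pi_{|x|}\tJ_{|x|}$ and preserves $\Uz$ (since $\omega(K_\mu)=K_{-\mu}$ and $\omega(J_\mu)=J_\mu$).

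The main thing to verify carefully is the stability of $\mathcal{I}\otimes\Uz\otimes\ffpr+\ffpr\otimes\Uz\otimes\mathcal{I}$ under left and right multiplication by $E_j$ and $F_j$; the bookkeeping of signs and $q$-powers from Proposition~\ref{prop:EFcommutation} is the only non-formal ingredient, but it is precisely the derivation property of $r_j$ and ${}_jr$ together with their preservation of $\mathcal{I}$ that makes everything close up. Everything else is formal manipulation of the triangular decomposition already established in Proposition~\ref{prop:U'triang}.
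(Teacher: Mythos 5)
Your proof is correct and follows essentially the same route as the paper: identify the kernel of the multiplication map $\ffpr\otimes\Uz\otimes\ffpr\rightarrow\UU$ with $\mathcal I\otimes\Uz\otimes\ffpr + \ffpr\otimes\Uz\otimes\mathcal I$, using Proposition~\ref{prop:EFcommutation} iterated together with the stability of $\mathcal I$ under $\ri,\ir$ to see that the two-sided ideals generated by $\mathcal I^\pm$ are $\mathcal{I}^-\Uz\UUpr^+$ and $\UUpr^-\Uz\mathcal{I}^+$, then quotient, and obtain (b) from (a) via $\omega$. Your write-up is somewhat more explicit about why the subspace is closed under the $\UUpr$-action, but the argument is the paper's.
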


\begin{proof}
Once again (b) follows from (a) by applying the involution $\omega$.
Let $J_\pm$  be the two-sided ideal of $\UUpr$ generated by
$\mathcal{I}^\pm=\set{x^\pm: x\in\mathcal I}$. Then
$\UU=\frac{\UUpr}{J_++J_-}$. Now from Proposition
\ref{prop:EFcommutation} iterated, we see that
$$
(\UUpr^+)\mathcal{I}^-\subseteq\mathcal{I}^-\Uz(\UUpr^+);
\qquad
\mathcal{I}^+(\UUpr^-)\subseteq(\UUpr^-)\Uz\mathcal{I}^+.
$$

Using the triangular decomposition of $\UUpr$, we have
$J_-=\UUpr\mathcal{I}^-\UUpr\subseteq\mathcal{I}^-\Uz(\UUpr^+)\subseteq
J_-$, hence $J_-=\mathcal{I}^-\Uz(\UUpr^+)$. Similarly,
$J_+=(\UUpr^-)\Uz\mathcal{I}^+$. Therefore,
\[
\UU=\frac{\UUpr^-\otimes \Uz \otimes \UUpr^+}{\UUpr^-\otimes
\Uz\otimes \mathcal{I}^++\mathcal{I}^-\otimes \Uz\otimes \UUpr^+} =
\frac{\UUpr^-}{\mathcal{I}^-}\otimes \Uz\otimes
\frac{\UUpr^+}{\mathcal{I}^+},
\]
from which (a) follows.
\qed\end{proof}

\begin{cor}
The maps $\vphantom{x}^\pm:\ff\rightarrow \UU^\pm$, $x\mapsto
x^{\pm}$, are $\Qq^\pi$-algebra isomorphisms, and $\Uz\rightarrow
\UU$ is a $\Qq^\pi$-algebra embedding.
\end{cor}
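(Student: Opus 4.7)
The plan is to derive this corollary essentially for free from the triangular decomposition $\ff\otimes\Uz\otimes\ff\xrightarrow{\sim}\UU$ of the preceding corollary. The maps $\vphantom{x}^\pm\colon\ff\rightarrow\UU^\pm$ are algebra homomorphisms by construction (as noted after relation~(g) in Section~\ref{sec:Udef}), and they are surjective onto $\UU^\pm$ by the very definition of $\UU^\pm$ as their image. Likewise, $\Uz\rightarrow\UU$ is an algebra homomorphism because the relations \S\ref{sec:Udef}(a)--(c) defining $\Uz$ hold inside $\UU$. Thus the entire content of the statement is injectivity.

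For the injectivity of $\vphantom{x}^-$, suppose $x\in\ff$ satisfies $x^-=0$ in $\UU$. Under the isomorphism of the preceding corollary, the element $x\otimes 1\otimes 1\in\ff\otimes\Uz\otimes\ff$ is sent to $x^-\cdot 1\cdot 1=0$; since the map is a $\Qq^\pi$-linear isomorphism we must have $x\otimes 1\otimes 1=0$ in $\ff\otimes\Uz\otimes\ff$. Because $\ff$ and $\Uz$ are free $\Qq^\pi$-modules and $1\in\Uz$, $1\in\ff$ are nonzero basis elements, this forces $x=0$. The argument for $\vphantom{x}^+$ is symmetric (one may either repeat the proof with part~(b) of the preceding corollary, or apply the automorphism $\omega$ which swaps $\UU^+$ and $\UU^-$ up to a $\tilde{J}$-twist).

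For the embedding $\Uz\hookrightarrow\UU$, the same strategy applies: if $z\in\Uz$ has image $0$ in $\UU$, then $1\otimes z\otimes 1\in\ff\otimes\Uz\otimes\ff$ maps to $z=0$, so $1\otimes z\otimes 1=0$, and freeness forces $z=0$. I do not expect any real obstacle here: the work has already been done in Proposition~\ref{prop:U'triang} and its corollary, and the present statement is essentially a repackaging of that triangular decomposition. The only point worth flagging is that surjectivity of $\vphantom{x}^\pm$ is trivial while injectivity genuinely requires the full triangular decomposition (one cannot deduce it from relation~(g) alone, since a priori relation~(g) could collapse more of $\ff$ than intended).
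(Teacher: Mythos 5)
Your proposal is correct and is exactly the (implicit) argument intended by the paper, which states this corollary without proof as an immediate consequence of the triangular decomposition isomorphism $\ff\otimes\Uz\otimes\ff\xrightarrow{\sim}\UU$ established just before it. The only minor point you assert without comment is that $\ff$ is $\Qq^\pi$-free (or at least that $1\in\ff$ spans a free direct summand), but this is harmless: $\ff_0=\Qq^\pi\cdot 1$ is a free rank-one summand since $(1,1)=1$, which is all the freeness your $x\otimes 1\otimes 1=0\Rightarrow x=0$ step actually needs.
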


For $\nu\in \N[I]$, we shall denote the image $\ff_\nu^\pm$ by $\UU^\pm_\nu$.

\begin{prop}  \label{prop:x=0}
Let $x\in\ff_\nu$ where $\nu\in \N[I]$ is nonzero.
\begin{enumerate}
\item[(a)] If $x^+F_i=\pi_i^{p(x)} F_ix^+$ for all $i\in I$ then $x=0$.
\item[(b)] If $x^-E_i=\pi_i^{p(x)}E_ix^-$ for all $i\in I$ then $x=0$.
\end{enumerate}
\end{prop}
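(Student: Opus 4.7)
The plan is to reduce the statement to Lemma~\ref{lem:derivkill} by invoking Proposition~\ref{prop:EFcommutation} together with the triangular decomposition of $\UU$ supplied by the Corollary to Proposition~\ref{prop:U'triang}.

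For part (a), I would first apply Proposition~\ref{prop:EFcommutation}(a) to the hypothesis $x^+F_i = \pi_i^{p(x)}F_i x^+$, which immediately yields
\[
\ri(x)^+\,\wtd{J}_i\wtd{K}_i \;=\; \pi_i^{p(x)-p(i)}\,\wtd{K}_{-i}\,\ir(x)^+
\]
in $\UU$ for every $i\in I$. Using the commutation rules \S\ref{sec:Udef}(d) to move $\wtd{J}_i\wtd{K}_i$ past $\ri(x)^+$ (which only introduces a unit scalar in $\Qq^\pi$), I can rewrite this as
$c_i (\wtd{J}_i\wtd{K}_i) \ri(x)^+ \,=\, c_i'\, \wtd{K}_{-i}\, \ir(x)^+$
with both sides now in standard $\Uz\cdot\UU^+$ form.

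The next step is to invoke triangular decomposition. By Proposition~\ref{prop:U'triang}, $\Uz$ is the group algebra of $Y\times (Y/2Y)$, in which $\wtd{J}_i\wtd{K}_i = J_{d_i i}K_{d_i i}$ and $\wtd{K}_{-i}=K_{-d_i i}$ correspond to \emph{distinct} group elements: their $K$-parts are $d_i i$ and $-d_i i$, which differ since $Y$ is torsion-free and $d_i i\neq 0$. Via the isomorphism $\ff\otimes \Uz\otimes \ff \xrightarrow{\sim} \UU$, $u\otimes J_\nu K_\mu\otimes w \mapsto u^- J_\nu K_\mu w^+$ (the Corollary to Proposition~\ref{prop:U'triang}, specialized to the $\UU^-$-factor equal to $1$), the above equation forces the coefficient of each $\Uz$-component to vanish, so $\ri(x)^+ = \ir(x)^+ = 0$ in $\UU^+$. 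Since $\vphantom{|}^+\colon \ff \to \UU^+$ is an algebra isomorphism, $\ri(x) = \ir(x) = 0$ in $\ff$ for every $i$, and Lemma~\ref{lem:derivkill}(a) yields $x=0$.

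Part (b) proceeds in exact parallel: Proposition~\ref{prop:EFcommutation}(b) converts the hypothesis into $\wtd{J}_i\wtd{K}_i\,\ir(x)^- = \pi_i^{p(x)-p(i)}\,\ri(x)^-\wtd{K}_{-i}$; commuting $\wtd{J}_i\wtd{K}_i$ past $\ir(x)^-$ puts both sides in $\UU^-\cdot\Uz$ form, after which the triangular decomposition again separates the two distinct $\Uz$-components and forces $\ir(x) = \ri(x) = 0$, so Lemma~\ref{lem:derivkill}(b) finishes. (Alternatively, (b) can be deduced from (a) by applying the automorphism $\omega$, as in the proof of Proposition~\ref{prop:EFcommutation}.) The only real obstacle is the routine bookkeeping of the $q$- and $\pi$-scalars produced in the commutations and the check that the two $\Uz$-group-elements involved remain distinct; both are immediate from the torsion-freeness of $Y$ and the structure of $\Uz$ given by Proposition~\ref{prop:U'triang}.
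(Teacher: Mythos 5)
Your proof is correct and follows essentially the same route as the paper's: apply Proposition~\ref{prop:EFcommutation}, deduce $\ri(x)^+=\ir(x)^+=0$ from linear independence, and conclude with Lemma~\ref{lem:derivkill}. The only difference is that the paper simply invokes "the linear independence" while you spell out why it holds, via the triangular decomposition from the Corollary to Proposition~\ref{prop:U'triang} and the fact that $\wtd J_i\wtd K_i\neq\wtd K_{-i}$ in $\Uz$ -- a harmless (and arguably helpful) expansion of the same argument.
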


\begin{proof}
It follows from Proposition \ref{prop:EFcommutation} and the linear
independence of $\ri(x)^+ \wtd{J}_i\wtd{K}_i$ (respectively, the linear
independence of
$\wtd{J}_i\wtd{K}_{-i}\,\ir(x)^+$) that $\ri(x)^+=\ir(x)^+=0$ for all $i$.
Hence $x=0$ by Lemma~\ref{lem:derivkill}.
\qed\end{proof}

\subsection{Antipode}\label{subsec:antipode}

For $\nu\in \N[I]$, write $\nu =\sum_i\nu_i i$ and 
$\nu=\sum_{a=1}^{\height \nu} i_{a}$ for $i_a\in I$. Then we set
\begin{align*}
c(\nu)&=\nu\cdot\nu / 2 - \sum_i \nu_i i\cdot i/2\in \Z,
  \\
e(\nu)&=\sum_{a<b} p(i_a)p(i_b) \in \Z.
\end{align*}

\begin{lem}
Let $\nu\in \N[I]$.
\begin{enumerate}
\item[(a)]
There is a unique $\Qq^\pi$-linear map
$S:\UU\rightarrow \UU$ such that
\[
S(E_i)=-\wtd{J}_{-i}\wtd{K}_{-i}E_i,\ \ 
S(F_i)=-F_i\wtd{K}_{i},\ \  S(K_\mu)=K_{-\mu}, \ \ 
S(J_{\nu})=J_{-\nu},
\]
and $S(xy)=\pi^{p(x)p(y)}S(y)S(x)$ for all $x,y\in\UU$.

\item[(b)] For any $x\in \ff_\nu$, we have
\begin{align*}
S(x^+)&=(-1)^{\height \nu}\pi^{e(\nu)} (\pi
q)^{c(\nu)}\wtd{J}_{-\nu}\wtd{K}_{-\nu} \sigma(x)^+,\\
S(x^-)&=(-1)^{\height \nu}\pi^{e(\nu)}
q^{-c(\nu)}\sigma(x)^-\wtd{K}_{\nu}.
\end{align*}

\item[(c)]
There is a unique $\Qq^\pi$-linear map
$S':\UU\rightarrow \UU$ such that
\[
S'(E_i)=-E_i\wtd{J}_{-i}\wtd{K}_{-i},\ \ 
S'(F_i)=-\wtd{K}_{i}F_i,\ \  S'(K_\mu)=K_{-\mu},\ \ 
S(J_{\nu})=J_{-\nu},
\]
and $S'(xy)=\pi^{p(x)p(y)}S'(y)S'(x)$ for all $x,y\in\UU$.

\item[(d)]
For any $x\in \ff_\nu$, we have
\begin{align*}
S'(x^+)&=(-1)^{\height \nu}\pi^{e(\nu)} (\pi q)^{-c(\nu)}
\sigma(x)^+\wtd{J}_{-\nu}\wtd{K}_{-\nu},\\ 
S'(x^-)&=(-1)^{\height
\nu}\pi^{e(\nu)} q^{c(\nu)}\wtd{K}_{\nu}\sigma(x)^-.
\end{align*}

\item[(e)] We have $SS'=S'S=1$.

\item[(f)]
If $x\in \ff_\nu$, then $S(x^+)=(\pi q)^{-f(\nu)} S'(x^+)$ and
$S(x^-)=q^{f(\nu)} S'(x^-)$ where $f(\nu)=\sum_i \nu_i i\cdot i$.
\end{enumerate}
\end{lem}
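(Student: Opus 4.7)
My plan is to first build $S$ and $S'$ as signed anti-automorphisms on the presentation $\UUpr$, then use the closed-form expressions (b) and (d) to descend to $\UU$. Uniqueness in (a) and (c) is immediate: the prescribed values on the generators together with the rule $S(xy)=\pi^{p(x)p(y)}S(y)S(x)$ determine the image of every monomial, and monomials span $\UU$. For existence, I would define $S$ by signed anti-multiplicativity on the free superalgebra on $E_i,F_i,J_\mu,K_\mu$ and verify that each defining relation \S\ref{sec:Udef}(a)--(f) is preserved. Relations (a)--(c) are trivial, and (d)--(e) reduce to short checks. The only nontrivial verification is the $E$-$F$ commutator (f), where one must carefully commute the various $\wtd{J}_{\pm i}$ and $\wtd{K}_{\pm i}$ factors across $E_j,F_j$, tracking $(\pi q)$-powers as well as the sign $\pi^{p(i)p(j)}$ on both sides.

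Next I prove (b) by induction on $\height(\nu)$. The base case $\nu=i$ matches the definition directly. For the inductive step, factor $x=x'x''$ with $\nu=\nu'+\nu''$ and expand
\[
S((x'x'')^+)=\pi^{p(\nu')p(\nu'')}S(x''^+)S(x'^+)
\]
using the inductive hypotheses. The key maneuver is commuting $\wtd{J}_{-\nu'}\wtd{K}_{-\nu'}$ leftward past $\sigma(x'')^+$, which, via $\wtd{K}_\mu z^+=q^{\mu\cdot|z|}z^+\wtd{K}_\mu$ and its $J$-analogue, produces a factor $(\pi q)^{\nu'\cdot\nu''}$. Combined with the additive identities
\[
c(\nu'+\nu'')=c(\nu')+c(\nu'')+\nu'\cdot\nu'',\qquad e(\nu'+\nu'')=e(\nu')+e(\nu'')+p(\nu')p(\nu''),
\]
and with $\sigma(x'x'')^+=\sigma(x'')^+\sigma(x')^+$, one recovers the claimed formula. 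The formula for $S(x^-)$ is analogous, with $\wtd{K}_{\nu''}$ crossing $\sigma(x')^-$ producing $q^{-\nu'\cdot\nu''}$ rather than $q^{\nu'\cdot\nu''}$. The $S'$-formulas in (d) are derived identically.

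Given (b), any $x\in\mathcal{I}$ satisfies $S(x^+)=c\cdot\wtd{J}_{-\nu}\wtd{K}_{-\nu}\sigma(x)^+$ for an invertible scalar $c$, and since $\sigma$ descends to $\ff$ we have $\sigma(x)\in\mathcal{I}$, so $\sigma(x)^+$ lies in the Serre ideal of $\UUpr$; the analogous statement for $x^-$ holds. Consequently $S$ (and similarly $S'$) descends to $\UU$, completing (a) and (c). For (e), signed anti-multiplicativity reduces $SS'=S'S=1$ to a one-line check on each generator. For (f), substituting (b) and (d) and commuting $\wtd{J}_{-\nu}\wtd{K}_{-\nu}$ past $\sigma(x)^+$ contributes $(\pi q)^{\nu\cdot\nu}$, after which the identity $\nu\cdot\nu-2c(\nu)=\sum_i\nu_i(i\cdot i)=f(\nu)$ closes the argument. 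The main obstacle I anticipate is the verification of relation (f) in stage one: this is a bookkeeping computation, but involves several delicate commutations of $J$ and $K$ factors across $E$ and $F$, and any sign or $(\pi q)$-power error would propagate everywhere downstream.
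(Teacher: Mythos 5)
The paper states this lemma without proof (it is a routine verification following Lusztig's pattern), so there is no written argument to compare against. Your proposal is correct and follows the standard route: define $S,S'$ on the presentation $\UUpr$ by signed anti-multiplicativity and check the defining relations, prove the closed-form identities (b) and (d) by induction on $\height\nu$ using the weight-commutation rules for $\wtd{J},\wtd{K}$ together with the additivity identities $c(\nu'+\nu'')=c(\nu')+c(\nu'')+\nu'\cdot\nu''$ and $e(\nu'+\nu'')\equiv e(\nu')+e(\nu'')+p(\nu')p(\nu'')\pmod 2$, then use (b),(d) and the $\sigma$-invariance of $\mathcal I$ to descend to $\UU$; parts (e) and (f) are then direct. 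The $(\pi q)$-power bookkeeping in both the inductive step and in (f) (where $\nu\cdot\nu-2c(\nu)=f(\nu)$) checks out, and the reduction of (e) to a generator check is legitimate since $SS'$ and $S'S$ are genuine algebra homomorphisms.
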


The map $S$ (resp. $S'$) is called the antipode (resp. the skew-antipode) of $\UU$.
Note that
\begin{align*}
S(E_i^{(n)}) &=(-1)^n(\pi_i
q_i^2)^{\binom{n}{2}}\wtd{J}_{-ni}\wtd{K}_{-ni}E_i^{(n)},
 \\
S'(E_i^{(n)}) &=(-1)^n(\pi_i
q_i^2)^{-\binom{n}{2}}E_i^{(n)}\wtd{J}_{-ni}\wtd{K}_{-ni},
 \\
S(F_i^{(n)}) &=(-1)^n(\pi_i
q_i^2)^{-\binom{n}{2}}F_i^{(n)}\wtd{K}_{ni},
 \\
S'(F_i^{(n)}) &=(-1)^n(\pi_i
q_i^2)^{\binom{n}{2}}\wtd{K}_{ni}F_i^{(n)}.
\end{align*}

\subsection{Specializations of $\UU$ at $\pi=\pm 1$}
\label{subsec:specializations}

The {\em specialization at $\pi=1$} (respectively, at {\em
$\pi=-1$}) of a $\Qq^\pi$-algebra $R$ is understood as
$\Qq\otimes_{\Qq^\pi}R$, where $\Qq$ is the $\Qq^\pi$-module with
$\pi$ acting as $1$ (respectively, as $-1$).

Let $\mathcal{J}$ be the (2-sided) ideal of $\UU$ generated by $\{J_\mu-1|\mu\in
Y\}$.

The specialization at $\pi=-1$ of the algebra $\UU/\mathcal{J}$ is
naturally identified with a quantum group associated to the Cartan
datum $(I,\cdot)$ (cf. \cite{Lu}). The specialization at $\pi=1$ of
the algebra $\UU$, denoted by $\UU|_{\pi=1}$, is a variant of this quantum group, with some
extra (harmless) central elements $J_\mu$. Specialization at $\pi=1$
for the rest of the paper essentially reduces our results to those of Lusztig
\cite{Lu}.

The specialization at $\pi=1$ of the superalgebra $\UU/\mathcal{J}$
is identified with a quantum supergroup associated to the super
Cartan datum $(I,\cdot)$ considered in the literature; cf. \cite{Ya,
BKM}. The specialization at $\pi=-1$ of $\UU$, denoted by $\UU|_{\pi=-1}$,  will also be referred
to as a {\em quantum supergroup} of type $(I,\cdot)$, and the extra
generators $J_i$ allow us to formulate integrable modules $V(\la)$
for {\em all} $\la \in X^+$, which was not possible before.

{\bf All constructions and results in the remainder of this paper
clearly afford specializations at $\pi=-1$, which provide new
constructions and new results for quantum supergroups and their
representations. }

\subsection{The categories $\curlyC$ and $\catO$}
 \label{subsec:catCO}

 In the remainder of this paper, by a representation of the algebra $\UU$
 we mean a  $\Qq^\pi$-module on which $\UU$ acts.
 Note we have a direct sum decomposition of the $\Qq^\pi$-module
 $\Qq^\pi =(\pi+1)\Qq\oplus  (\pi-1)\Qq$ , where $\pi$ acts as $1$
on $(\pi+1)\Qq$ and   as $-1$
on $ (\pi-1)\Qq$.

We define the category $\curlyC$ (of weight $\UU$-modules) as follows. An object of $\curlyC$
is a $\Z_2$-graded $\UU$-module $M=M_\zero \oplus M_\one$,
compatible with the $\Z_2$-grading on $\UU$, with a given weight
space decomposition
\[
M=\bigoplus_{\lambda\in X} M^\lambda,
 \qquad M^\lambda=\set{m\in M \mid
K_\mu m=q^{\ang{\mu, \lambda}} m, J_\mu m=\pi^{\ang{\mu, \lambda}}
m, \forall \mu \in Y},
\]
such that $M^\lambda=M_\zero^\lambda\oplus M_\one^\lambda$ where
$M_\zero^\lambda=M^\lambda\cap M_\zero$ and
$M_\one^\lambda=M^\lambda\cap M_\one$. The $\Z_2$-graded structure
is only particularly relevant to tensor products, and will generally
be suppressed when irrelevant.
%
We have the following $\Qq^\pi$-module decomposition for each weight space:
$M^\la = (\pi+1) M^\la\oplus  (\pi-1) M^\la$; accordingly, we have
$M=M_+ \oplus M_-$ as $\UU$-modules, where
$M_\pm :=\oplus_{\la\in X}  (\pi  \pm 1) M^\la$ is an $\UU$-module on which $\pi$ acts as $\pm 1$,
i.e. a $\UU|_{\pi=\pm 1}$-module.
Hence the category $\curlyC$ decomposes into a direct sum
$\curlyC = \curlyC_+ \oplus \curlyC_-$, where $\curlyC_\pm$ can be identified with categories
of weight modules over the specializations $\UU|_{\pi=\pm 1}$.

\begin{lem}
A simple $\UU$-module is  a simple module of  either $\UU|_{\pi=1}$ or  $\UU|_{\pi=-1}$.
\end{lem}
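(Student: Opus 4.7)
The plan is to exploit the fact that $\Qq^\pi = \Qq[\pi]/(\pi^2-1)$ splits via the central idempotents $e_\pm := \tfrac{1}{2}(1\pm \pi)$, and that this splitting is automatically inherited by any $\Qq^\pi$-algebra together with its module category. Since $\pi \in \Qq^\pi$ is central in $\UU$, so are $e_+$ and $e_-$, and they form a complete pair of orthogonal idempotents: $e_+ + e_- = 1$, $e_+ e_- = 0$, and $e_\pm^2 = e_\pm$.

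First I would note that for any $\UU$-module $M$, the central idempotents act as $\UU$-module endomorphisms on $M$, yielding a canonical decomposition $M = e_+M \oplus e_-M$ into $\UU$-submodules. The summand $e_\epsilon M$ is exactly the $\epsilon$-eigenspace of $\pi$ for $\epsilon\in\{\pm 1\}$, so $\pi$ acts on it by the scalar $\epsilon$. This refines the weight decomposition $M^\lambda = (\pi+1)M^\lambda \oplus (\pi-1)M^\lambda$ already recorded just before the lemma.

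Next, assuming $M$ is simple, one of $e_+ M$ or $e_- M$ must be zero, so $\pi$ acts on $M$ by a single sign $\epsilon$. The $\UU$-action therefore factors through the projection $\UU \twoheadrightarrow \UU|_{\pi=\epsilon} = \Qq\otimes_{\Qq^\pi}\UU$, giving $M$ the structure of a $\UU|_{\pi=\epsilon}$-module. Finally, any $\UU|_{\pi=\epsilon}$-submodule of $M$ is automatically a $\UU$-submodule (the two actions coincide on $M$, differing only in whether $\pi$ is retained as a formal symbol or specialized), so simplicity over $\UU$ transfers to simplicity over $\UU|_{\pi=\epsilon}$.

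There is no substantive obstacle: the argument is a purely formal consequence of $\Qq^\pi$ being a product of two copies of $\Qq$ under the specializations $\pi = \pm 1$, and the proof should occupy only a few lines.
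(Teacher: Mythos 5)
Your proof is correct and matches the mechanism the paper uses: the decomposition $M = M_+ \oplus M_-$ appearing just before the lemma is exactly the decomposition by the central idempotents $\tfrac{1}{2}(1\pm\pi)$, and the lemma follows because a simple module must be killed by one of them. If anything, your phrasing is slightly cleaner — you invoke the idempotents directly on $M$ rather than weight space by weight space, and your argument does not need $M$ to lie in $\curlyC$ — but the underlying idea is the same.
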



Let $M\in \curlyC$ and let $m\in M^\lambda$. The formulas below
follow from Lemma~\ref{L:commutation}.
\begin{enumerate}
\item[(a)]
$E_i^{(N)}F_i^{(M)}m=\sum_{t} \pi_i^{MN-\binom{t+1}{2}}
\bbinom{N-M+\ang{i,\lambda}}{t}_i F_i^{(M-t)}E_i^{(N-t)}m$;

\item[(b)]
$F_i^{(M)}E_i^{(N)}m=\sum_{t} \pi_i^{(M-t)(N-t)-t^2}
\bbinom{M-N-\ang{i,\lambda}}{t}_i E_i^{(N-t)}F_i^{(M-t)}m$;

\item[(c)] $F_i^{(M)}E_j^{(N)}m=E_j^{(N)}F_i^{(M)}m$,\; for $i\neq
j$;

\item[(d)] $\;\ \bbinom{\wtd{K}_i;a}{t}_i m =\bbinom{\langle
i,\la \rangle +a}{t}_i m.$
\end{enumerate}


Note that $\UU\otimes \UU$ is a superalgebra with multiplication
$(a\otimes b)(c\otimes d)=\pi^{p(b)p(c)}ac\otimes bd$. A tensor
product of $\UU$-modules  $M\otimes N$ is naturally a $\UU\otimes
\UU$-module with the obvious diagonal grading under the action
$(x\otimes y)(m\otimes n)=\pi^{p(y)p(m)}xm\otimes yn$.

The tensor product of modules is naturally a $\UU$-module under the
coproduct action. Moreover, $\curlyC$ is closed under tensor
products. Note that for $a\in \Z_{>0}$, $M',M''\in\curlyC$, $m'\in
M'^{\lambda'}$ and $m''\in M''^{\lambda''}$, we have
\begin{align*}
E_i^{(a)}(m'\otimes m'')
 &=\sum_{a'+a''=a} \pi_i^{a''p(m')+a''\langle i, \lambda'\rangle}
q_i^{a'a''+a''\ang{i,\lambda'}} E_i^{(a')}m'\otimes E_i^{(a'')}m'',
\\
F_i^{(a)}(m'\otimes m'')
 &=\sum_{a'+a''=a}
\pi_i^{a''p(m')+a'a''}q_i^{-a'a''-a'\ang{i,\lambda''}} F_i^{(a')}m'\otimes
F_i^{(a'')}m''.
\end{align*}


To any $M\in\curlyC$, we can define a new $\UU$-module structure via
$u\cdot m=\omega(u)m$; we denote this module by $^\omega M$. By
definition, note that $^\omega M^{\lambda}=M^{-\lambda}$.


Let $\lambda\in X$. Then there is a unique $\UU$-module structure on
$\ff$ such that for any $y\in\ff$, $\mu\in Y$ and $i\in I$ we have
$K_\mu y=q^{\ang{\mu, \lambda-|y|}} y$, $J_\nu y=\pi^{\ang{\nu,
\lambda-|y|}} y$, $F_i y = \theta_i y$, and $E_i 1=0$. As in the
non-super case, this follows readily from the triangular
decomposition. This module will be called a Verma module and denoted
by $M(\lambda)$. The parity grading on $\ff$ induces a parity
grading on $M(\lambda)$ where $p(1)=0$.
As before, we have a $\UU$-module decomposition $M(\la) =M(\la)_+ \oplus M(\la)_-$,
where $M(\la)_\pm$ can be identified as the Verma module of  $\UU|_{\pi=\pm1}$ (which is a $\Qq$-vector space).

For any $M\in \curlyC$ and an element $m\in M^\lambda$ such that
$E_i m=0$ for all $i$, there is a unique $\UU$-homomorphism
$M(\lambda)\rightarrow M$ via $1\mapsto m$. This can be proved as in
\cite[3.4.6]{Lu} using now Lemma~\ref{L:commutation}.


Let $\catO$ be the full subcategory of $\curlyC$ such that for any
$M$ in $\catO$ and $m\in M$, there exists an $n\geq 0$ such that $x^+m=0$ for all
$x\in \ff_\nu$ with $\height \nu\geq n$. Note that $M(\lambda)$ and
its quotient $\UU$-modules belong to $\catO$.

\subsection{Category $\curlyC_{\mathrm{int}}$ of integrable modules}
 \label{subset:intO}

An object $M\in \curlyC$ is said to be {\em integrable} if for any
$m\in M$ and any $i\in I$, there exists $n_0\geq 1$ such that
$E_i^{(n)}m=F_i^{(n)}m=0$ for all $n\geq n_0$. Let
$\curlyC_{\mathrm{int}}$ be the full subcategory of $\curlyC$ whose
objects are the integrable $\UU$-modules.

For $M,M',M''\in\curlyC_{\mathrm{int}}$, we have $^\omega M,
M'\otimes M''\in \curlyC_{\mathrm{int}}$. The proof of the following
lemma proceeds as in the non-super case; see \cite[Lemma~3.5.3]{Lu}.

\begin{lem}
For $(a_i),(b_i)\in \N^I$ and $\lambda\in X$, let $M$ be the
quotient of $\UU$ by the left ideal generated by the elements
$F_i^{a_i+1}$, $E_i^{b_i+1}$, $K_\mu-q^{\ang{\mu,\lambda}}$ with
$\mu\in Y$, and $J_\nu-\pi^{\ang{\nu,\lambda}}$ with $\nu\in Y$. Then
$M$ is an integrable $\UU$-module.
\end{lem}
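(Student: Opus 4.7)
The plan is to show that the set $P := \{m \in M : E_i^{(n)}m = F_i^{(n)}m = 0 \text{ for every } i \in I \text{ and } n \gg 0\}$ is a $\UU$-submodule of $M$ containing the image $\bar 1$ of $1 \in \UU$; since $M = \UU\bar 1$, this forces $P = M$. The defining relations give $F_i^{a_i+1}\bar 1 = E_i^{b_i+1}\bar 1 = 0$, and invertibility of $[n]_i^!$ in $\Qq^\pi$ promotes these to vanishing of the divided powers $F_i^{(n)}\bar 1 = E_i^{(n)}\bar 1 = 0$ for $n > \max(a_i,b_i)$, giving $\bar 1 \in P$. That $M \in \curlyC$ follows from the triangular decomposition of $\UU$ (Proposition~\ref{prop:U'triang} and its corollary) together with the scalar action of $K_\mu, J_\nu$ on $\bar 1$, which presents $M$ as a direct sum of weight spaces.

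For the $\UU$-stability of $P$: stability under $K_\mu, J_\nu$ is immediate from relations (d)--(e) of \S\ref{sec:Udef}. Fix $i \in I$ and set $P_i := \{m : E_i^{(n)}m = F_i^{(n)}m = 0 \text{ for } n \gg 0\}$, so $P = \bigcap_{i \in I} P_i$; it is enough to prove each $P_i$ is $\UU$-stable. Rank-one stability of $P_i$ under $E_i, F_i$ follows directly from Lemma~\ref{L:commutation}: expanding $E_i^{(N)}(E_im)$, $F_i^{(N)}(E_im)$, and their $F_i m$-analogues via that lemma produces $\Qq^\pi$-linear combinations of $F_i^{(a)}E_i^{(b)}m$ that vanish for $N$ large when $m \in P_i$. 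For $j \ne i$, relation (f) of \S\ref{sec:Udef} gives $E_iF_j = \pi^{p(i)p(j)}F_jE_i$, whence by induction $E_i^{(N)}F_j = \pi^{Np(i)p(j)}F_jE_i^{(N)}$; this yields $E_i^{(N)}(F_j m) = 0$ and symmetrically $F_i^{(N)}(E_j m) = 0$ for $N$ large.

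The main obstacle is to establish the remaining vanishings $E_i^{(N)}(E_j m) = 0$ and $F_i^{(N)}(F_j m) = 0$ for $j \ne i$ and $N$ large. Here I invoke the quantum Serre relation (Proposition~\ref{prop:quantserre}). Writing $N_{ij} := 1 - \langle i, j'\rangle \geq 1$, I multiply the Serre identity for $E_i, E_j$ on the left by $E_i^{(L)}$ (using $E_i^{(L)}E_i^{(n)} = \bbinom{L+n}{L}_i E_i^{(L+n)}$) and then apply the result to $E_i^{(k)}m$. This produces, for each $L, k \geq 0$, an identity expressing $E_i^{(L+N_{ij})}E_j E_i^{(k)}m$ as a $\Qq^\pi$-linear combination of terms $E_i^{(L+n)}E_j E_i^{(N_{ij}-n+k)}m$ with $0 \leq n < N_{ij}$. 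For $k \geq n_0 - 1$, each exponent $N_{ij}-n+k$ on the right is at least $n_0$ and the right side vanishes termwise, giving the base case $E_i^{(N)}E_j E_i^{(k)}m = 0$ for all $N \geq N_{ij}$. A descending induction on $k$ then propagates this vanishing down to $k = 0$, yielding $E_i^{(N)}E_jm = 0$ for $N \geq N_{ij}+n_0-1$. The $F$-case is entirely analogous, since Proposition~\ref{prop:quantserre} applies equally well with $F_i, F_j$ in place of $E_i, E_j$ via the canonical surjection $\ff \twoheadrightarrow \UU^-$. This completes the proof that each $P_i$ is $\UU$-stable, hence $P = M$ and $M$ is integrable.
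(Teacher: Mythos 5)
Your proof is correct and follows essentially the same strategy as the one the paper points to — the paper itself gives no independent argument, but simply cites Lusztig's Lemma 3.5.3, whose proof is precisely what you reproduce: define the subspace $P$ of locally-nilpotent ("integrable") vectors, observe $\bar 1\in P$, and show $P$ is a $\UU$-submodule by checking stability under each generator, with the rank-one cases handled by Lemma~\ref{L:commutation}, the mixed $E_iF_j$ case ($i\ne j$) by relation~\ref{sec:Udef}(f), and the crucial $E_i^{(N)}E_j$ case by a descending induction on the auxiliary exponent $k$ in $E_i^{(N)}E_jE_i^{(k)}m$ driven by the $(q,\pi)$-Serre relation of Proposition~\ref{prop:quantserre}. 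Your bookkeeping of the explicit bound $N\geq N_{ij}+n_0-1$ is in fact correct (by induction one has $E_i^{(N)}E_jE_i^{(n_0-1-j)}m=0$ for $N\geq N_{ij}+j$), and the appeal to invertibility of $\bbinom{L+N_{ij}}{L}_i$ in $\Qq^\pi$ to isolate the $n'=0$ term is the needed $\pi$-adaptation of Lusztig's step. Two small presentational points worth making explicit: (i) when invoking Lemma~\ref{L:commutation} for the rank-one case, the coefficients involve $\bbinom{\tK_i;\cdot}{t}_i$, which are elements of $\Uz$ rather than scalars; since $M$ is a weight module this is harmless (one works on homogeneous weight vectors, or simply observes that each term is killed by the vanishing of $F_i^{(N-t)}m$ regardless of the prefactor), but it should be noted; (ii) the reduction to homogeneous weight vectors should be stated, using that $P_i$ is a graded subspace of the weight module $M$. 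Neither gap affects the validity of the argument.
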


The proof of the following proposition proceeds as in the non-super
case; see \cite[Proposition~3.5.4 and 23.3.11]{Lu}.

\begin{prop}
If $u\in\UU$ such that $u$ acts as zero on every integrable module, then $u=0$.
\end{prop}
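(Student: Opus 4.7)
The strategy is to combine the triangular decomposition (Corollary to Proposition~\ref{prop:U'triang}) with the family of integrable modules furnished by the preceding lemma. Fix a homogeneous $\Qq^\pi$-basis $\mathbf{B}$ of $\ff$ containing $1$. By the triangular decomposition, write $u = \sum_{b,b'\in\mathbf{B},\,\mu,\nu\in Y} c_{b',\mu,\nu,b}\,b'^{-}J_{\nu}K_\mu\,b^{+}$ as a finite sum; the goal is to show every $c_{b',\mu,\nu,b}$ vanishes. Let $N$ be strictly larger than $\height|b|$ and $\height|b'|$ for every nonzero term.

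For any $\lambda\in X$, apply the preceding lemma with $a_i=b_i=N$ for all $i\in I$ to obtain an integrable module $M_\lambda$ with cyclic generator $m_0$ of weight $\lambda$. The first key step is to establish that the set of vectors $\{b'^{-}b^{+}m_0 : b,b'\in\mathbf{B},\,\height|b|,\height|b'|\leq N\}$ is linearly independent in $M_\lambda$. Via Lemma~\ref{L:commutation} and the coproduct/commutation formulas of Section~\ref{sec:algebraU}, this reduces to an induction on height showing that the defining left ideal (generated by $F_i^{N+1}$, $E_i^{N+1}$, $K_\mu - q^{\langle\mu,\lambda\rangle}$, $J_{\nu} - \pi^{\langle\nu,\lambda\rangle}$) produces no relation among PBW monomials of bounded height.

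Granted this linear independence, the hypothesis $u\cdot m_0 = 0$ together with the scalar action $J_{\nu}K_\mu\,b^{+}m_0 = \pi^{\langle\nu,\lambda+|b|'\rangle}\,q^{\langle\mu,\lambda+|b|'\rangle}\,b^{+}m_0$ (where $|b|' = \sum_i|b|_i\,i'\in X$) yields, for each pair $(b',b)$, the identity $\sum_{\mu,\nu} c_{b',\mu,\nu,b}\,\pi^{\langle\nu,\lambda+|b|'\rangle}\,q^{\langle\mu,\lambda+|b|'\rangle} = 0$ for every $\lambda\in X$. A Vandermonde-type argument on the distinct characters $q^{\langle\mu,\cdot\rangle}$, combined with parity separation via the decomposition $\curlyC=\curlyC_+\oplus\curlyC_-$ (equivalently, specialization to $\pi=\pm 1$), forces $c_{b',\mu,\nu,b}=0$ for all indices.

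The main obstacle will be the PBW linear-independence claim in $M_\lambda$: unlike the Verma module, $M_\lambda$ is cut by nonlinear relations on both sides, so one must verify these cannot conspire to produce a short relation among monomials of bounded height. This is precisely the content of Lusztig's argument in the non-super case (\cite[3.5.4, 23.3.11]{Lu}); in the covering setting, the same induction applies verbatim once the $\pi$-signs are tracked through Lemma~\ref{L:commutation}, since the coproduct and commutation formulas retain the exact shape of their classical counterparts up to the $(q,\pi)$-deformations already recorded.
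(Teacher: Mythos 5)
The paper proves this proposition only by reference to Lusztig \cite[3.5.4, 23.3.11]{Lu}, whose argument mirrors the injectivity proof of the triangular decomposition (Proposition~\ref{prop:U'triang} here): one evaluates $\Delta(u)$ on the vector $\xi_{-\lambda'}\otimes\eta_\lambda\in{}^\omega V(\lambda')\otimes V(\lambda)$ for $\lambda,\lambda'\in X^+$ large, uses the comultiplication to separate the $b'^-$ and $b^+$ contributions into the two tensor factors, and reads off the vanishing of coefficients by projecting onto a suitable weight space of the tensor product. The needed linear independence there is essentially immediate, because $V(\lambda)$ (resp.\ ${}^\omega V(\lambda')$) is free over $\UU^-$ (resp.\ $\UU^+$) in the weight range of interest when $\lambda,\lambda'$ are dominant and large.

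Your proof is a genuinely different route: instead of a tensor product, you push everything into a single cyclic module $M_\lambda$ from the preceding lemma and appeal to linear independence of $\{b'^-b^+m_0\}$ for bounded height there. This is where the gap sits. That independence is much more delicate than the tensor-product version, because $M_\lambda$ is a quotient by a left ideal containing both $F_i^{N+1}$ and $E_i^{N+1}$, so relations can ``fold back'' across the middle weight: applying Lemma~\ref{L:commutation} to $E_i^{(k)}F_i^{(N+1)}m_0=0$ produces identities that tie together vectors $F_i^{(N+1-t)}E_i^{(k-t)}m_0$ of varying bi-degree, several of which lie inside the height range you claim is free. You assert this independence ``is precisely the content of Lusztig's argument,'' but Lusztig does not prove any such statement about $M_\lambda$; his proof is designed to avoid it. As written, the central claim is unproven, and the citation offered for it is inaccurate. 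The most natural way to actually verify it would be to map $M_\lambda$ into a tensor product ${}^\omega V(\lambda')\otimes V(\lambda'')$ and check injectivity in bounded height --- at which point one has reconstructed the standard tensor-product argument, so the detour through $M_\lambda$ saves nothing. You should instead run the argument directly on ${}^\omega V(\lambda')\otimes V(\lambda)$ following the proof of Proposition~\ref{prop:U'triang}, noting that for $\lambda,\lambda'$ sufficiently dominant the truncations do not affect the relevant weight spaces.
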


\begin{prop}
Let $\lambda\in X^+$.
\begin{enumerate}
\item[(a)]
Let $\curlyT$ be the left ideal of $\ff$ generated by the elements
$\theta_i^{\ang{i,\lambda}+1}$ for all $i\in I$. Then $\curlyT$ is a
$\UU$-submodule of the Verma module $M(\lambda)$.

\item[(b)] 
The quotient $\UU$-module $V(\lambda) :=M(\lambda)/\curlyT$ is integrable.
\end{enumerate}
\end{prop}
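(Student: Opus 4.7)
The plan is as follows. For part (a), the ideal $\curlyT = \sum_{i} \ff\cdot \theta_i^{\ang{i,\la}+1}$ is by construction a left $\ff$-submodule of $M(\la)=\ff$, so it is automatically stable under the $F_j$-action (which is left multiplication by $\theta_j$) and, being weight-homogeneous, under $K_\mu$ and $J_\mu$. The heart of the argument is to verify $E_j$-stability. I will first establish the ``highest-weight'' vanishing statement that $E_j\cdot \theta_i^{\ang{i,\la}+1}\cdot 1 =0$ in $M(\la)$ for every $i,j$. For $j\ne i$, this is immediate from relation \S\ref{sec:Udef}(f) (whose $\delta_{ij}$ term disappears), giving $E_jF_i^{\ang{i,\la}+1} =\pi^{(\ang{i,\la}+1)p(i)p(j)}F_i^{\ang{i,\la}+1}E_j$, together with $E_j\cdot 1=0$. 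For $j=i$, I will apply Lemma~\ref{L:commutation}  (or equivalently the formula \S\ref{subsec:catCO}(a)) with $N=1$ and $M=\ang{i,\la}+1$ to the highest weight vector: the only potentially surviving summand comes with the coefficient $\bbinom{1-M+\ang{i,\la}}{t}_i=\bbinom{0}{t}_i$, which is $0$ for $t\ge 1$ by~\eqref{eq:binomidb}, leaving only the $t=0$ term $\pi_i^{M}F_i^{(M)}E_i\cdot 1=0$.

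Next I propagate this vanishing along $\ff$ to deduce $E_j\cdot(y\cdot\theta_i^{\ang{i,\la}+1})\in\curlyT$ for every $y\in\ff$. Using Proposition~\ref{prop:EFcommutation}(b) in $\UU$, I expand
\[
E_jy^- = \pi_j^{p(y)}y^-E_j + \frac{\wtd{J}_j\wtd{K}_j\cdot{}_jr(y)^- - \pi_j^{p(y)-p(j)}r_j(y)^-\wtd{K}_{-j}}{\pi_jq_j-q_j^{-1}}
\]
and apply both sides to $F_i^{\ang{i,\la}+1}\cdot 1$. The first term vanishes by the highest-weight computation above. In each remaining summand, the elements ${}_jr(y)^-$ and $r_j(y)^-$ act on $M(\la)=\ff$ as left multiplication by the corresponding elements of $\ff$, so the result lies in $\ff\cdot(\theta_i^{\ang{i,\la}+1})\subseteq\curlyT$; the operators $\wtd{J}_j\wtd{K}_j$ and $\wtd{K}_{-j}$ contribute only the (nonzero) scalars by which they act on the relevant weight space. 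This establishes (a).

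For part (b), once (a) is proved, $V(\la)=M(\la)/\curlyT$ is a well-defined $\UU$-module, and its cyclic generator $\bar 1$ (image of $1\in M(\la)$) satisfies $E_i\bar 1=0$ and $F_i^{\ang{i,\la}+1}\bar 1=0$ for all $i\in I$, as well as $K_\mu\bar 1=q^{\ang{\mu,\la}}\bar 1$, $J_\nu\bar 1=\pi^{\ang{\nu,\la}}\bar 1$. By the Lemma of \S\ref{subset:intO}, the universal $\UU$-module defined by these vanishing relations (with $a_i=\ang{i,\la}$ and $b_i=0$) is integrable. This universal module admits a surjective $\UU$-homomorphism onto $V(\la)$ sending its generator to $\bar 1$, and integrability is inherited by quotients, so $V(\la)$ is integrable.

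The only delicate step is the highest-weight computation $E_i\cdot\theta_i^{\ang{i,\la}+1}\cdot 1=0$, since the naive use of Proposition~\ref{prop:EFcommutation}(b) on $\theta_i^{\ang{i,\la}+1}$ produces an eigenvalue of $\wtd{J}_i\wtd{K}_i$ on the highest weight vector which depends on $d_i$ rather than $p(i)$; the cleaner route is through the divided-power identity of Lemma~\ref{L:commutation} and the binomial vanishing~\eqref{eq:binomidb}, where the $(q,\pi)$-binomial coefficient $\bbinom{0}{t}_i$ collapses the sum independently of any bar-consistency assumption. Everything else is a straightforward consequence of the triangular decomposition and the commutation formulas already developed.
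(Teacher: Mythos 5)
Your proof is correct in its essential structure, and it follows the same route that Lusztig uses in \cite[Proposition~3.5.6]{Lu}, which is all the paper itself cites. The three ingredients — (1) the highest-weight vanishing $E_j\,\theta_i^{\ang{i,\la}+1}\cdot 1=0$ via the divided-power commutation formula, (2) propagation to all of $\curlyT$ via Proposition~\ref{prop:EFcommutation}(b) and the twisted derivations $r_j,{}_jr$ combined with the left-ideal property, and (3) integrability of $V(\la)$ via the universal cyclic module from the Lemma in \S\ref{subset:intO} together with the (easy) stability of integrability under quotients — are each handled correctly.

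Your closing remark, however, overstates the difference between the two routes to the highest-weight vanishing. Formula \S\ref{subsec:catCO}(a), which you invoke, is obtained from Lemma~\ref{L:commutation} only by passing through \S\ref{subsec:catCO}(d), i.e.\ by replacing the operator $\bbinom{\wtd{K}_i;a}{t}_i$ with the numerical binomial $\bbinom{\ang{i,\la}+a}{t}_i$ when acting on a vector of weight $\la$. That replacement requires $\wtd{J}_i$ to act as $\pi_i^{\ang{i,\la}}$ rather than as $\pi^{d_i\ang{i,\la}}$, and these agree only when $(d_i-p(i))\ang{i,\la}\in 2\Z$ — which is guaranteed by bar-consistency (or, for a fixed $\la$, by evenness of $\ang{i,\la}$). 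Indeed, applying Lemma~\ref{L:commutation} with $N=1$, $M=\ang{i,\la}+1$ directly to $1\in M(\la)$, the $t=1$ term carries a factor proportional to $\pi^{(M-1)(d_i-p(i))}-1$, which is exactly the $d_i$-versus-$p(i)$ discrepancy you flag in the Proposition~\ref{prop:EFcommutation}(b) route; it does not collapse without the same hypothesis. So the two approaches are genuinely equivalent, and your observation identifies a real implicit hypothesis in the statement rather than a feature distinguishing one method from the other. This does not affect the correctness of the body of the proof, only the accuracy of the last paragraph's commentary.
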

The proof is as in the non-super case \cite[Proposition~3.5.6]{Lu}.
As usual $V(\la) =V(\la)_+ \oplus V(\la)_-$, and $\curlyT =\curlyT_+ \oplus \curlyT_-$;
moreover we have the
identification $V(\la)_\pm = M(\la)_\pm / \curlyT_\pm$.

We denote the image of $1$ in $V(\lambda)$ by $v_\lambda^+$
when convenient. This module has an induced parity grading from the
associated Verma module by setting $p(v_\lambda^+)=0$.
When considering the image of $1$ in the module $^\omega
V(\lambda)$, we will denote this vector by $v_{\lambda}^-$.

\begin{prop}
Let $M$ be an object of $\curlyC_{\mathrm{int}}$ and let $m\in
M^\lambda$ be a non-zero vector such that $E_im$=0 for all $i$. Then
$\lambda\in X^+$ and there is a unique morphism (in
$\curlyC_{\mathrm{int}}$) $t':V(\lambda)\rightarrow M$ sending
$v_\lambda^+$ to $m$.
\end{prop}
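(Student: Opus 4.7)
The plan is to mimic the standard argument \cite[Proposition~3.5.8]{Lu}, exploiting the $(q,\pi)$-commutation formulas of \S\ref{subsec:catCO} together with the binomial identities of \S\ref{sec:gaussids}. First I would establish that $\lambda \in X^+$. Fix $i \in I$ and set $a_i = \langle i, \lambda \rangle$. Since $E_i m = 0$, only the term $t=M$ survives in formula (a) of \S\ref{subsec:catCO}, giving
$$E_i^{(M)} F_i^{(M)} m = \pi_i^{M^2 - \binom{M+1}{2}} \bbinom{a_i}{M}_i m.$$
Integrability forces $F_i^{(M)} m = 0$ for $M \gg 0$, so $\bbinom{a_i}{M}_i$ vanishes for large $M$. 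Using \eqref{eq:binomida} to rewrite this as a scalar multiple of $\bbinom{M-a_i-1}{M}_i$, one checks that $a_i < 0$ would make the latter a product of nonvanishing $(q,\pi)$-integers; hence $a_i \in \N$ for every $i$.

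Next I would invoke the universal property of the Verma module (stated just before \S\ref{subset:intO}): because $E_i m = 0$ and $m$ has weight $\lambda$, there is a unique $\UU$-homomorphism $t : M(\lambda) \to M$ sending $1 \mapsto m$. To conclude that $t$ descends to $V(\lambda) = M(\lambda)/\curlyT$, I must show $F_i^{a_i+1} m = 0$ for each $i$, equivalently $v_i := F_i^{(a_i+1)} m = 0$.

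The key observation is that $v_i$ is itself a highest weight vector in $M$ of weight $\mu_i := \lambda - (a_i+1) i'$. For $j \neq i$, formula (c) of \S\ref{subsec:catCO} gives $E_j v_i = F_i^{(a_i+1)} E_j m = 0$. For $j=i$, formula (a) with $N=1$, $M=a_i+1$ involves only $\bbinom{0}{t}_i$; the $t=0$ term vanishes because $E_i m = 0$, and the $t=1$ term vanishes because $\bbinom{0}{1}_i = [0]_i = 0$. Thus $E_i v_i = 0$. If $v_i \neq 0$, applying the first step to $v_i$ would force $\langle i, \mu_i \rangle = -a_i - 2 \in \N$, contradicting $a_i \geq 0$. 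Therefore $v_i = 0$, so $t$ descends to $t' : V(\lambda) \to M$, which lies in $\curlyC_{\mathrm{int}}$ since both $V(\lambda)$ and $M$ do. Uniqueness is immediate because $v_\lambda^+$ generates $V(\lambda)$ as a $\UU$-module.

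The main obstacle I anticipate is the careful bookkeeping of $\pi$-signs in the commutation formulas, especially in the second step where I need $v_i$ to satisfy $E_j v_i = 0$ for \emph{every} $j$; however the argument is engineered so that all critical vanishings come either from a zero $(q,\pi)$-integer ($[0]_i = 0$) or from the identity \eqref{eq:binomidb}, both of which are insensitive to the $\pi$-prefactors.
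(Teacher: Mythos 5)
Your proof is correct and takes essentially the route the paper intends by its citation of \cite[Proposition~3.5.8]{Lu}: use the commutation formula (a) of \S\ref{subsec:catCO} with $E_i m=0$ to show $\bbinom{\ang{i,\lambda}}{M}_i$ must vanish for $M\gg 0$, conclude $\ang{i,\lambda}\geq 0$, then show $F_i^{(\ang{i,\lambda}+1)}m$ is a highest-weight vector of non-dominant weight and hence zero, so the Verma map descends to $V(\lambda)$. The only superfluity is that you check $E_j v_i=0$ for all $j$, whereas the weight contradiction already follows from $E_i v_i=0$ alone; this is harmless.
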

The proof is as in the non-super case \cite[Proposition~3.5.8]{Lu}.

%






\section{The quasi-$\mathcal R$-matrix and the quantum Casimir}
 \label{sec:quasiR}

In this section, we introduce the quasi-$\mathcal R$-matrix as well
as the quantum Casimir for $\UU$ and establish their basic
properties. Using the Casimir element, we show that the category
$\catO_{\mathrm{int}}$ is semisimple and classify its simple object
by dominant integral weights.

\subsection{The quasi-$\mathcal R$-matrix $\Theta$}

Consider the vector spaces
\[
\mathcal{H}_N= \Up\Uz\Big(\sum_{\height\nu\geq N}\Um_\nu\Big)
\otimes \UU+\UU\otimes\Um\Uz\Big(\sum_{\height\nu\geq N}\Up_\nu\Big)
\]
for $N\in \Z_{>0}$. Note that $\mathcal{H}_N$ is a left ideal in
$\UU\otimes \UU$; moreover, for any $u\in \UU\otimes\UU$, we can
find an $r\geq 0$ such that
$\mathcal{H}_{N+r}u\subset\mathcal{H}_{N}$.

Let $(\UU\otimes\UU)^\wedge$ be the inverse limit of the vector
spaces $(\UU\otimes\UU)/\mathcal{H}_n$. Then the $\Qq^\pi$-algebra
structure extends by continuity to a $\Qq^\pi$-algebra structure on
$(\UU\otimes\UU)^\wedge$, and we have the obvious algebra embedding
$\UU\otimes \UU\rightarrow (\UU\otimes\UU)^\wedge$.

Let $\ov{\phantom{x}}:\UU\otimes \UU\rightarrow \UU\otimes \UU$ be
the $\Q$-algebra homomorphism given by
$\ov{\phantom{x}}\otimes\ov{\phantom{x}}$. This extends to a
$\Q$-algebra homomorphism on the completion. Let
$\ov{\Delta}:\UU\rightarrow \UU\otimes\UU$ be the  $\Qq^\pi$-algebra
homomorphism given by $\ov{\Delta}(x)=\ov{\Delta(\ov{x})}$.

\begin{thm}\label{thm:rmatrix}
\begin{enumerate}
\item[(a)]
There is a unique family of elements $\Theta_\nu\in
\Um_\nu\otimes\Up_\nu$ (with $\nu\in\N[I]$) such that
$\Theta_0=1\otimes 1$ and $\Theta=\sum_\nu \Theta_\nu\in
(\UU\otimes\UU)^\wedge$ satisfies
$\Delta(u)\Theta=\Theta\ov{\Delta}(u)$ for all $u\in \UU$ (where
this identity is in $(\UU\otimes\UU)^\wedge$).

\item[(b)]
Let $B$ be a $\Qq^\pi$-basis of $\ff$ such that $B_\nu=B\cap
\ff_\nu$ is a basis of $\ff_\nu$ for any $\nu$. Let $\set{b^*| b\in
B_\nu}$ be the basis of $\ff_\nu$ dual to $B_\nu$ under $(,)$. We
have
\[
\Theta_\nu=(-1)^{\height \nu} \pi^{e(\nu)} \pi_\nu q_{\nu}\sum_{b\in B_\nu}
b^-\otimes b^{*+}\in \Um_\nu\otimes\Up_\nu,
\]
where $e(\nu)$ is defined as in \S \ref{subsec:antipode}.
\end{enumerate}
\end{thm}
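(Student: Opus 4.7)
The plan is to adapt Lusztig's construction of the quasi-$\mathcal{R}$-matrix \cite[\S4.1]{Lu} to the present $(q,\pi)$-setting. Since $\Delta$ and $\ov\Delta$ are algebra homomorphisms, the relation $\Delta(u)\Theta = \Theta\ov\Delta(u)$ is multiplicative in $u$, so it suffices to verify it on the algebra generators $K_\mu$, $J_\mu$, $E_i$, $F_i$.

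First I would dispatch the $K_\mu$ and $J_\mu$ equations. Using $\ov{K_\mu}=J_\mu K_{-\mu}$, $\ov{J_\mu}=J_\mu$, and $J_{2\mu}=1$, a direct computation gives $\ov\Delta(K_\mu)=\Delta(K_\mu)=K_\mu\otimes K_\mu$ and similarly for $J_\mu$. Decomposing $\Theta$ into bi-weight components in $\UU\otimes\UU$, these Cartan equations force every nonzero term to lie in $\UU^{-\mu}\otimes\UU^{\mu}$ with matching parities. Combined with the triangular decomposition of $\UU$ (Proposition \ref{prop:U'triang}), this justifies the ansatz $\Theta=\sum_\nu\Theta_\nu$ with $\Theta_\nu\in\Um_\nu\otimes\Up_\nu$.

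Next I would impose the equation at $E_i$. A short calculation using $\wtd J_i^2=1$ yields $\ov\Delta(E_i)=E_i\otimes 1+\wtd K_{-i}\otimes E_i$, so the required identity reads
\[
(E_i\otimes 1+\wtd J_i\wtd K_i\otimes E_i)\Theta_\nu \;=\; \Theta_\nu(E_i\otimes 1)+\Theta_{\nu-i}(\wtd K_{-i}\otimes E_i),
\]
after matching components in $\Um_{\nu-i}\otimes\Up_\nu$ and $\Um_\nu\otimes\Up_{\nu-i}$. Using Proposition \ref{prop:EFcommutation}(b) to commute $E_i$ past the $\UU^-$-factor of $\Theta_\nu$, the twisted derivations ${}_ir$ and $r_i$ enter and produce a recursive relation expressing each $\Theta_\nu$ in terms of $\Theta_{\nu-i}$. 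The analogous argument at $F_i$, via Proposition \ref{prop:EFcommutation}(a), gives the mirror recursion. Together with $\Theta_0=1\otimes 1$, these recursions determine $\Theta_\nu$ uniquely by induction on $\height\nu$, settling uniqueness in (a).

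For part (b), and hence existence, I would verify directly that the explicit formula satisfies these recursions. The key tool is the adjunction identity \eqref{eq:derivadjunct}, $(\theta_iy,x)=(\theta_i,\theta_i)(y,{}_ir(x))$ and its mirror, which dualizes ${}_ir, r_i$ against the bilinear form and hence against the dual basis $\{b^*\}$; this is what converts the recursion on $\Theta_\nu$ into an identity between sums over $B_\nu$ and over $B_{\nu-i}$. The main obstacle will be to track, with correct signs, all the $q$- and $\pi$-powers arising from the twisted multiplication in $\UU\otimes\UU$ (and in $\ff\otimes\ff$ via the coproduct formulas for $x^\pm$), the parity corrections in Proposition \ref{prop:EFcommutation}, and the combinatorial quantity $e(\nu)$ that records the $\Z_2$-sign from ordering a monomial of shape $\nu$. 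The normalization $(-1)^{\height\nu}\pi^{e(\nu)}\pi_\nu q_\nu$ is forced precisely by this bookkeeping, and once verified for the generator recursions it establishes both (a) and (b).
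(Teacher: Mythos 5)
Your plan follows the paper's proof essentially step for step: reduce to the generator equations, use the Cartan relations together with the triangular decomposition to justify the ansatz $\Theta_\nu\in\Um_\nu\otimes\Up_\nu$, derive recursions from the $E_i$ and $F_i$ equations via Proposition~\ref{prop:EFcommutation} and the twisted derivations, and verify the dual-basis formula against those recursions using the adjunction identity~\eqref{eq:derivadjunct}, with uniqueness following by the same induction on $\height\nu$. One small slip: in your displayed $E_i$-recursion the second term on the left should be $(\wtd J_i\wtd K_i\otimes E_i)\Theta_{\nu-i}$, not $\Theta_\nu$, since $E_i\Up_{\nu-i}\subset\Up_\nu$; the rest of your reasoning is consistent with the corrected version.
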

The element $\Theta$ will be called the {\em quasi-$\mathcal R$-matrix} for $\UU$.

\begin{proof}
Consider an element $\Theta\in (\UU\otimes\UU)^\wedge$ of the
form $\Theta=\sum_\nu \Theta_\nu$ with $\Theta_\nu=\sum_{b,b'\in
B_\nu} c_{b',b} b'^-\otimes b^{*+}$, $c_{b',b}\in\Qq^\pi$. The set
of $u\in\UU$ such that $\Delta(u)\Theta=\Theta\ov{\Delta}(u)$ is
clearly a subalgebra of $\UU$ containing $\UU^0$. Therefore, it is
necessary and sufficient that it contains the $E_i$ and $F_i$. This
amounts to showing that
\begin{align*}
\sum_{b_1,b_2\in B_\nu}  c_{b_1,b_2}E_ib_1^-\otimes b_2^{*+}
 &+\sum_{b_3,b_4\in B_{\nu-i}} \pi_i^{p(b_3)} c_{b_3,b_4}
  \wtd{J}_i\wtd{K}_{i}b_3^-\otimes E_ib_4^{*+}\\
=\sum_{b_1,b_2\in B_\nu} \pi_i^{p(b_2)} c_{b_1,b_2} b_1^-E_i\otimes b_2^{*+}
&+\sum_{b_3,b_4\in B_{\nu-i}} c_{b_3,b_4} b_3^-\wtd{K}_{-i}\otimes b_4^{*+}E_i, \\
\intertext{and}
\sum_{b_1,b_2\in B_\nu} \pi_i^{p(b_1)} c_{b_1,b_2} b_1^-\otimes F_ib_2^{*+}
 &+\sum_{b_3,b_4\in B_{\nu-i}} c_{b_3,b_4} F_ib_3^-\otimes \wtd{K}_{-i}b_4^{*+}\\
=\sum_{b_1,b_2\in B_\nu} c_{b_1,b_2} b_1^-\otimes b_2^{*+}F_i
 &+\sum_{b_3,b_4\in B_{\nu-i}} \pi_i^{p(b_4)}c_{b_3,b_4}
  b_3^-F_i\otimes b_4^{*+}\wtd{J}_{i}\wtd{K}_{i}.
\end{align*}
Let $z\in \ff$. Then since the inner product is nondegenerate, this
equality is equivalent to the equality
\begin{align*}
\sum_{b_1,b_2\in B_\nu} c_{b_1,b_2} (b_2^*,z) &(E_ib_1^{-}-\pi_i^{p(b_2^*)}b_1^-E_i)\\
+\sum_{b_3,b_4\in B_{\nu-i}} c_{b_3,b_4}
&\parens{\pi_i^{p(b_3)}(\theta_ib_4^*,z)\wtd{J}_i\wtd{K}_{i}b_3^{-}
 - (b_4^*\theta_i,z)b_3^{-}\wtd{K}_{-i}}=0, \\
\intertext{and}
\sum_{b_1,b_2\in B_\nu} c_{b_1,b_2} (b_1,z) &(\pi_i^{p(b_1)} F_ib_2^{*+}-b_2^{*+}F_i)\\
+\sum_{b_3,b_4\in B_{\nu-i}} c_{b_3,b_4}
&\parens{(\theta_ib_3,z)\wtd{K}_{-i}b_4^{*+}-\pi_i^{p(b_4)}
(b_3\theta_i,z)b_4^{*+}\wtd{J}_i\wtd{K}_{i}}=0.
\end{align*}
Note that $p(b_1)=p(b_2)=p(b_3)+p(i)=p(b_4)+p(i)$.
Using Proposition \ref{prop:EFcommutation} and the derivations, we have
\begin{align*}
&\sum_{b_1,b_2\in B_\nu} (\pi_iq_i-q_i^{-1})^{-1}c_{b_1,b_2} (b_2^*,z)
 (\wtd{J}_i\wtd{K}_i\,\ir(b_1)^- -\pi_i^{p(b_1)-p(i)}\ri(b_1)^-\wtd{K}_{-i})\\
+&\sum_{b_3,b_4\in B_{\nu-i}} c_{b_3,b_4}(\theta_i,\theta_i)
 \parens{\pi_i^{p(b_3)}(b_4^*,\ir(z))\wtd{J}_i
 \wtd{K}_{i}b_3^{-}- (b_4^*,\ri(z))b_3^{-}\wtd{K}_{-i}}=0, \\
\intertext{and}
&\sum_{b_1,b_2\in B_\nu} -(\pi_iq_i-q_i^{-1})^{-1}c_{b_1,b_2}
 (b_1,z) (\ri(b_2)^+\wtd{J}_i\wtd{K}_i-\pi_i^{p(b_2)-p(i)}\wtd{K}_{-i}\,\ir(b_2)^+)\\
+&\sum_{b_3,b_4\in B_{\nu-i}} c_{b_3,b_4}(\theta_i,\theta_i)
\parens{(b_3,\ir(z))\wtd{K}_{-i}b_4^{*+}-\pi_i^{p(b_4)}
 (b_3,\ri(z))b_4^{*+}\wtd{J}_i\wtd{K}_{i}}=0.
\end{align*}
Using the triangular decomposition, this is equivalent to the equalities
\begin{align}
\label{eq:qrmc}
\sum_{b_1,b_2}c_{b_1,b_2} (b_2^*,z)\ir(b_1)
 +\sum_{b_3,b_4}\pi_iq_i \pi_i^{p(b_4)} c_{b_3,b_4}(b_4^*,\ir(z))b_3&=0, \\
\label{eq:qrmd}
\sum_{b_1,b_2}c_{b_1,b_2} \pi_i^{p(b_1)-p(i)}(b_2^*,z)\ri(b_1)
 +\sum_{b_3,b_4}\pi_iq_i c_{b_3,b_4}(b_4^*,\ri(z))b_3&=0, \\
\label{eq:qrme}
\sum_{b_1,b_2}c_{b_1,b_2} (b_1,z)\ri(b_2)
 +\sum_{b_3,b_4}\pi_iq_i\pi_i^{p(b_4)}c_{b_3,b_4}(b_3,\ri(z))b_4^{*}&=0, \\
\label{eq:qrmf}
\sum_{b_1,b_2}\pi_i^{p(b_2)-p(i)}c_{b_1,b_2} (b_1,z)\ir(b_2)
 +\sum_{b_3,b_4}\pi_iq_ic_{b_3,b_4}(b_3,\ir(z))b_4^{*}&=0.
\end{align}

Now when $c_{b,b'}=(-1)^{\height(\nu)}\pi^{e(\nu)} \pi_\nu q_\nu\delta_{b,b'}$, we have
\begin{align*}
\sum_{b}\pi^{e(\nu)} q_\nu (b^*,z)\ir(b) -\sum_{b'} \pi^{e(\nu)} \pi_\nu q_{\nu}(b'^*,\ir(z))b'&=0, \\
\sum_{b}\pi^{e(\nu-i)}\pi_\nu q_\nu (b^*,z)\ri(b)
 -\sum_{b'}\pi^{e(\nu-i)} \pi_\nu q_{\nu} (b'^*,\ri(z))b'&=0, \\
\sum_{b}\pi^{e(\nu)} \pi_\nu q_\nu (b,z)\ri(b) -\sum_{b'}\pi^{e(\nu)} \pi_\nu q_{\nu}(b',\ri(z))b'^{*}&=0, \\
\sum_{b}\pi^{e(\nu-i)} \pi_\nu q_\nu (b,z)\ir(b)
 -\sum_{b'}\pi^{e(\nu-i)} \pi_\nu q_{\nu}(b',\ir(z))b'^{*}&=0.
\end{align*}

These equalities are easily verified by checking when $z$ is a basis or dual basis element.

Thus the existence of such a $\Theta$ is verified. Suppose
$\Theta'_\nu$ and $\Theta'$ also satisfy the conditions in (a). Then
$\Theta-\Theta'=\sum c_{b,b'} b^-\otimes b'^+$ must satisfy 
\eqref{eq:qrmc}-\eqref{eq:qrmf}
and has $c_{b,b}=0$ for $b\in B_0$. Suppose $c_{b,b'}=0$ for
$b,b'\in B_\nu'$ for $\height(\nu')  < n$ and assume
$\height(\nu)=n$. Then the second sum in \eqref{eq:qrmc} is zero, so
$\ir(\sum_{b_1,b_2}c_{b_1,b_2} (b_2^*,z)b_1)=0$. But then
$\sum_{b_1,b_2}c_{b_1,b_2} (b_2^*,z)b_1=0$, whence
$(\sum_{b_2}c_{b_1,b_2} b_2^*,z)=0$ for all $z\in \ff$. Therefore
$c_{b_1,b_2}=0$ for all $b_1,b_2\in B_\nu$. By induction
$\Theta-\Theta'=0$ proving uniqueness.
\qed\end{proof}

\begin{example}\label{example:rankonetheta}
Let $I=I_\one={i}$ as in Example \ref{example:rankoneqg},
and let us determine $\Theta$ in this case using Theorem 
\ref{thm:rmatrix}(b).
The obvious basis to choose is $B=\set{\theta^{(n)}:n\in \N}$,
and then we see from Lemma \ref{lem:divpowcoprod} that 
$\Theta=\sum_n a_n F^{(n)}\otimes E^{(n)}$,
where $a_n=(-1)^n (\pi q)^{-\binom{n+1}{2}}[n]^!(\pi q-q^{-1})^n$
(compare with \cite[\S 5]{CW}).
\end{example}

Recall that the bar involution on $\UU$ makes sense under the
assumption that the super Cartan datum is consistent.

\begin{cor}\label{C:thetathetainverse}
Assume that the super Cartan datum is consistent. We have
$\Theta\ov\Theta=\ov\Theta\Theta=1\otimes 1$ with equality in the
completion.
\end{cor}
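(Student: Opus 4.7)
The plan is to adapt the blueprint of Theorem \ref{thm:rmatrix}. Since $\ov{\pi}=\pi$, the map $\ov{\phantom{x}}\otimes\ov{\phantom{x}}$ is a $\Q$-algebra endomorphism of the super tensor product $\UU\otimes\UU$; applying it to the defining identity $\Delta(u)\Theta=\Theta\ov{\Delta}(u)$ yields $\ov{\Delta}(v)\ov{\Theta}=\ov{\Theta}\Delta(v)$ for every $v\in\UU$. Setting $\Xi:=\Theta\ov{\Theta}$ and composing gives
\[
\Delta(u)\,\Xi=\Theta\,\ov{\Delta}(u)\,\ov{\Theta}=\Xi\,\Delta(u),
\]
so $\Xi$ centralizes $\Delta(\UU)$ in $(\UU\otimes\UU)^\wedge$. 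Moreover, because each $\Theta_\mu,\ov{\Theta}_\mu\in\Um_\mu\otimes\Up_\mu$ and the super tensor multiplication respects the $\N[I]\times\N[I]$-bigrading inherited from $\UU^-\otimes\UU^+$, the product admits a weight expansion $\Xi=\sum_\nu\Xi_\nu$ with $\Xi_\nu\in\Um_\nu\otimes\Up_\nu$ and $\Xi_0=1\otimes 1$.

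The goal becomes to show $\Xi_\nu=0$ for $\nu\neq 0$. The key reduction is the following uniqueness statement: any $X=\sum_\nu X_\nu\in(\UU\otimes\UU)^\wedge$ with $X_\nu\in\Um_\nu\otimes\Up_\nu$, $X_0=1\otimes 1$, and $X\Delta(u)=\Delta(u)X$ for all $u\in\UU$ must equal $1\otimes 1$. To prove this, I would write $X_\nu=\sum_{b,b'\in B_\nu}c_{b,b'}\,b^-\otimes b'^{*+}$ and, from the commutations $X\Delta(F_i)=\Delta(F_i)X$ and $X\Delta(E_i)=\Delta(E_i)X$, extract weight-graded equations. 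Using Proposition \ref{prop:EFcommutation} to rewrite the resulting $E_i b^-$ and $b^+F_i$ terms, one arrives at a linear system on the $c_{b,b'}$ completely parallel to \eqref{eq:qrmc}--\eqref{eq:qrmf}; the only change is that $\ov{\Delta}$ is replaced by $\Delta$, which alters certain $q,\pi$-prefactors but preserves the structure of the equations. Applied to $X-(1\otimes 1)$---whose $\nu=0$ component vanishes---the same induction on $\height(\nu)$ used at the end of the proof of Theorem \ref{thm:rmatrix}, combined with Lemma \ref{lem:derivkill} and the nondegeneracy of $(\cdot,\cdot)$, forces $c_{b,b'}=0$ for all $\nu\neq 0$.

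Finally, applying bar to the established identity $\Theta\ov{\Theta}=1\otimes 1$ (using that $\ov{1\otimes 1}=1\otimes 1$ and that bar is an algebra endomorphism of $\UU\otimes\UU$) gives $\ov{\Theta}\Theta=1\otimes 1$. The principal technical hurdle is the bookkeeping of the super signs and $q,\pi$-prefactors arising from the super tensor multiplication and from Proposition \ref{prop:EFcommutation} when one carries out the expansions in the uniqueness step; however, these mirror the computations already handled in the proof of Theorem \ref{thm:rmatrix}, so no genuinely new phenomena should arise.
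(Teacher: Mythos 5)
Your proof is correct, but it takes a genuinely different route from the paper. The paper's argument is shorter: it observes that $\Theta$ is invertible (since $\Theta_0=1\otimes 1$ makes it a unit in the completion), substitutes $\ov u$ for $u$ in $\Delta(u)\Theta=\Theta\ov\Delta(u)$, applies the bar involution, and rearranges to obtain $\Delta(u)\ov\Theta^{\,-1}=\ov\Theta^{\,-1}\ov\Delta(u)$; the uniqueness in Theorem~\ref{thm:rmatrix}(a) then gives $\ov\Theta^{\,-1}=\Theta$ immediately. Your approach instead forms $\Xi=\Theta\ov\Theta$, shows it lies in the centralizer of $\Delta(\UU)$ in $(\UU\otimes\UU)^\wedge$, and then establishes a \emph{new} uniqueness statement (for elements of the given graded shape commuting with $\Delta(\UU)$) rather than invoking the one already proved. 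This is sound, and it has the modest advantage of avoiding the invertibility of $\Theta$; but the trade-off is real: the linear system you extract is \emph{not} the same as \eqref{eq:qrmc}--\eqref{eq:qrmf}, since $\ov\Delta(E_i)=E_i\otimes 1+\wtd K_{-i}\otimes E_i$ is replaced by $\Delta(E_i)=E_i\otimes 1+\wtd J_i\wtd K_i\otimes E_i$ (and similarly for $F_i$), which changes the $\UU^0$-components that you match up via the triangular decomposition. The induction on $\height\nu$ still closes, because the degree-lowering term still comes from $\ir$ or $\ri$ via Proposition~\ref{prop:EFcommutation} and Lemma~\ref{lem:derivkill} still applies; you correctly flag this as bookkeeping, but it is new bookkeeping, not a verbatim repeat, whereas the paper's route reuses the existing uniqueness statement with no recomputation. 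In short: both proofs are valid, the paper's is more economical, and yours substitutes a centralizer-triviality argument for the invertibility-plus-uniqueness argument.
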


\begin{proof}
First note that by construction $\Theta$ is invertible. We have
$\Delta(u)\Theta=\Theta\ov\Delta(u)$, so
$\Theta\Delta(\ov{u})=\Theta\ov\Delta(\ov u)=\Theta \ov{\Delta(u)}$.
Now applying the bar involution to both sides and rearranging, we
get\[\ov\Theta^{\,-1} \ov\Delta(u)=  \Delta(u)\ov{\Theta}^{\,-1}.\]
By uniqueness, $\ov{\Theta}^{\,-1}=\Theta$.
\qed\end{proof}

We can specialize the identity
$\Delta(u)\Theta=\Theta\ov{\Delta}(u)$ to deduce
\begin{align*}
(E_i\otimes 1)\Theta_\nu+(\wtd{J}_i\tK_i\otimes E_i)\Theta_{\nu-i}
 &=\Theta_\nu(E_i\otimes 1)+\Theta_{\nu-i}(\tK_{-i}\otimes E_i),
  \\
(1\otimes F_i)\Theta_\nu+(F_i\otimes \tK_{-i})\Theta_{\nu-i}
 &=\Theta_\nu(1\otimes F_i)+\Theta_{\nu-i}(F_i\otimes \wtd{J}_i\tK_{i}).
 \end{align*}
Setting $\Theta_{\leq p}=\sum_{\height \nu\leq p}\Theta_\nu$, we obtain that
\begin{align}
(E_i\otimes 1  +\wtd{J}_i\tK_i\otimes E_i) & \Theta_{\leq p}
-\Theta_{\leq p}(E_i\otimes 1+\tK_{-i}\otimes E_i)\notag
  \\
=& \sum_{\height \nu= p}(\wtd{J}_i\tK_i\otimes
E_i)\Theta_\nu-\sum_{\height \nu= p}\Theta_\nu(\tK_{-i}\otimes E_i),\label{eq:thetacomE}
  \\
(F_i\otimes \tK_{-i}  +1\otimes F_i) & \Theta_{\leq p}-\Theta_{\leq
p}(F_i\otimes \tK_{i}+1\otimes F_i)\notag
  \\
=&\sum_{\height \nu= p}(F_i\otimes \tK_{-i})\Theta_\nu-\sum_{\height
\nu= p}\Theta_\nu(F_i\otimes \wtd{J}_i\tK_{i}).\label{eq:thetacomF}
 \end{align}


%
%
\subsection{The quantum Casimir}

Let $B, B_\nu$ be as in Theorem \ref{thm:rmatrix}. Let $S$ be the
antipode and $\mathbf m:\UU\otimes \UU\rightarrow \UU$ 
be the multiplication map $u\otimes u'\mapsto uu'$. 
Applying $\mathbf m(S\otimes 1)$ to the identities
\eqref{eq:thetacomE} and \eqref{eq:thetacomF}, 
we obtain that, for any $p\geq 0$,
\begin{align*}
&\sum_{\height \nu\leq p}\sum_{b\in B_\nu} (-1)^{\height \nu}\pi_\nu q_\nu
\left( S(E_i b^-)b^{*+}+\pi_i^{p(\nu)}S(\wtd{J}_i\tK_i b^-)E_ib^{*+} \right.
  \\
&\qquad\qquad\qquad\qquad \qquad \left.
-\pi_i^{p(\nu)}S(b^-E_i)b^{*+} -S(b^-\tK_{-i})b^{*+}E_i \right)
 \\
&=\sum_{\height \nu= p}\sum_{b\in B_\nu}(-1)^{p}\pi_\nu q_\nu
\parens{\pi_i^{p(\nu)}S(\wtd{J}_i\tK_i
b^-)E_ib^{*+}-S(b^-\tK_{-i})b^{*+}E_i},
\end{align*}
and
\begin{align*}
&\sum_{\height \nu\leq p}\sum_{b\in B_\nu} (-1)^{\height \nu}\pi_\nu q_\nu
\left( \pi_i^{p(\nu)}S(b^-)F_ib^{*+}+S(F_i b^-)\tK_{-i}b^{*+} \right.
  \\
&\qquad\qquad\qquad\qquad \qquad \left.
-S(b^-)b^{*+}F_i-\pi_i^{p(\nu)}S(b^-F_i )b^{*+}\wtd{J}_i\tK_{i}
\right)
 \\
&=\sum_{\height \nu= p}\sum_{b\in B_\nu}(-1)^{p}\pi_\nu q_\nu
\parens{S(F_i b^-)\tK_{-i}b^{*+}-\pi_i^{p(\nu)}S(b^-F_i
)b^{*+}\wtd{J}_i\tK_{i}}.
\end{align*}

Now set $\Omega_{\leq p}=\sum_{\height \nu\leq p}\sum_{b\in
B_\nu}(-1)^{\height \nu} \pi_\nu q_\nu S(b^-)b^{*+}$. Then observing
that
\begin{align*}
S(E_i b^-)b^{*+} &+\pi_i^{p(\nu)}S(\wtd{J}_i\tK_i b^-)E_ib^{*+}
 \\
 &=\pi_i^{p(\nu)}S(b^-)(-\wtd{J}_{-i}\tK_{-i}E_i) b^{*+}
+\pi_i^{p(\nu)}S(b^-)\wtd{J}_{-i}\tK_{-i}E_ib^{*+}=0,
\end{align*}
we have
\begin{align*}
\wtd{J}_{-i} & \tK_{-i}E_i\Omega_{\leq p}-\tK_{i}\Omega_{\leq p}E_i
 \\
&=\sum_{\height \nu= p}\sum_{b\in B_\nu}(-1)^{p}\pi_\nu q_\nu
\parens{\pi_i^{p(\nu)}S(\tK_i
b^-)E_ib^{*+}-S(b^-\tK_{-i})b^{*+}E_i},
  \\
\Omega_{\leq p} & F_i-F_i\tK_{i}\Omega_{\leq p}\wtd{J}_{i}\tK_{i}
 \\ &
=-\sum_{\height \nu= p}\sum_{b\in B_\nu}(-1)^{p}\pi_\nu q_\nu
\parens{S(F_i b^-)\tK_{-i}b^{*+}-\pi_i^{p(\nu)}S(b^-F_i
)b^{*+}\wtd{J}_i\tK_{i}}.
\end{align*}

\begin{example}
Let $I=I_1={i}$ as in Examples \ref{example:rankoneqg} 
and let $\Theta$ be as defined in Example~ \ref{example:rankonetheta}.
Then using \S \ref{subsec:antipode},
\[\Omega_{\leq p}=
\sum_{1\leq n\leq p} a_n (\pi q^2)^{-\binom{n}{2}}F^{(n)}K^n E^{(n)}.\]
We note that though this is a rather different construction than the 
Casimir-type element in \cite{CW}, it will nevertheless be used
toward a similar purpose.
\end{example}

Let $M\in \catO$. Then for any $m\in M$ we have that
$\Omega(m)=\Omega_{\leq p} m$ is independent of $p$ when $p$ is
large enough. We can write 
\[\Omega(m)=\sum_{b} (-1)^{\height
|b|}\pi_{|b|}q_{|b|}S(b^-)b^{*+} m\] 
Then we have
\[\label{eq:omegacomm} \wtd{J}_{-i}\tK_{-i}E_i\Omega=\tK_i\Omega E_i,
 \qquad \Omega F_i=F_i\tK_i\Omega \wtd{J}_i\tK_i,
  \qquad \Omega K_\mu = K_\mu
\Omega,
\]
as operators on $M$. Therefore for $m\in M^\lambda$, we have
$$
\Omega E_im=(\pi_iq_i^{2})^{-\ang{i, \lambda+i'}}E_i\Omega m, \qquad
\Omega F_i=(\pi_iq_i^2)^{\ang{i, \lambda+i'}}F_i\Omega m.
$$

This can be rephrased in terms of the antipode.
Define the $\Qq^\pi$-linear map $\ov{S}:\UU\rightarrow
\UU$ by $\ov{S}(u)=\ov{S(\ov{u})}$. Then $\Omega
\ov{S}(u)=S(u)\Omega:M\rightarrow M$ for $u\in \UU$.

Let $C$ be a fixed coset of $X$ with respect to $\Z[I]\leq X$. Let
$G:C\rightarrow \Z$ be a function such that
\begin{equation}
\label{eq:Gdef}
G(\lambda)-G(\lambda-i')=\frac{i\cdot i}2 \ang{i,\lambda}
\quad \text{ for all } \lambda\in C,i\in I.
\end{equation}
Clearly such a function exists
and is unique up to addition of a constant function.

\begin{lem}\label{lem:Gcomparison}
Let $\lambda, \lambda'\in C\cap X^+$. If $\lambda\geq \lambda'$ and
$G(\lambda)=G(\lambda')$, then $\lambda=\lambda'$.
\end{lem}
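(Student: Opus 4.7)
The plan is to derive a closed-form expression for $G(\lambda)-G(\lambda')$ and read off that it is strictly positive unless $\lambda=\lambda'$. Since $\lambda,\lambda'\in C$ lie in a single coset mod $\Z[I]$, the hypothesis $\lambda\geq\lambda'$ means $\nu:=\lambda-\lambda'=\sum_i n_i i'$ with $n_i\in\N$, and I would like to show $G(\lambda)-G(\lambda')>0$ whenever $\nu\neq 0$.

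First I would choose any ordering $i_1,\ldots,i_N$ (where $N=\sum_i n_i$) with each $i\in I$ appearing exactly $n_i$ times, and telescope along the path $\mu_k:=\lambda-i_1'-\cdots-i_k'$ using the defining property \eqref{eq:Gdef}. This gives
\[
G(\lambda)-G(\lambda')=\sum_{k=1}^N d_{i_k}\langle i_k,\lambda\rangle-\sum_{1\leq j<k\leq N}d_{i_k}\langle i_k,i_j'\rangle.
\]
Using $d_i\langle i,j'\rangle=i\cdot j$ (immediate from \S\ref{subsec:rootdatum}(c)) and the identity $2\sum_{j<k}i_j\cdot i_k=\nu\cdot\nu-\sum_k i_k\cdot i_k=\nu\cdot\nu-2\sum_i n_i d_i$, this simplifies to
\[
G(\lambda)-G(\lambda')=\sum_i n_i d_i\langle i,\lambda\rangle-\tfrac{1}{2}\nu\cdot\nu+\sum_i n_i d_i.
\]

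Next I would perform the same telescoping in the opposite direction, running from $\lambda'$ up to $\lambda$ via $\mu'_k:=\lambda'+i_N'+\cdots+i_{N-k+1}'$; the analogous computation produces
\[
G(\lambda)-G(\lambda')=\sum_i n_i d_i\langle i,\lambda'\rangle+\tfrac{1}{2}\nu\cdot\nu+\sum_i n_i d_i.
\]
Averaging the two expressions cancels the $\nu\cdot\nu$ contribution and yields
\[
G(\lambda)-G(\lambda')=\tfrac{1}{2}\sum_i n_i d_i\langle i,\lambda+\lambda'\rangle+\sum_i n_i d_i.
\]
Since $\lambda,\lambda'\in X^+$ we have $\langle i,\lambda+\lambda'\rangle\geq 0$, and $d_i>0$, $n_i\geq 0$, so every term on the right is non-negative. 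The final sum $\sum_i n_i d_i$ is strictly positive unless all $n_i$ vanish. Hence $G(\lambda)=G(\lambda')$ forces $\nu=0$, i.e.\ $\lambda=\lambda'$.

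The main obstacle is arranging the double sum $\sum_{j<k}d_{i_k}\langle i_k,i_j'\rangle$ symmetrically so as to identify it with $\tfrac{1}{2}\nu\cdot\nu$; once that identification is made, the key trick is to symmetrize by doing the telescoping in both directions, since the term $\nu\cdot\nu/2$ itself need not be non-negative (for instance in affine types), whereas the symmetrized formula involves only manifestly non-negative quantities.
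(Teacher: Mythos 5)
Your argument is correct. The paper states this lemma without proof (it is the exact analogue of Lusztig's \cite[6.1.4]{Lu}, from which it is implicitly borrowed), and your telescoping computation, together with the symmetrization trick that eliminates the possibly indefinite $\tfrac12\nu\cdot\nu$ term and leaves only manifestly non-negative contributions, is precisely the standard argument for this fact.
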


Let $M\in \curlyC$. For each $\Z[I]$-coset $C$ in $X$, define
$M_C=\bigoplus_{\lambda\in C} M^\lambda$. It is clear that
\begin{equation}\label{eq:cosetdecomp}
M=\bigoplus_{C\in X/\Z[I]} M_C.
\end{equation} 

\begin{prop} \label{prop:casimirprops}
Let $M\in\catO$, and let $\Omega:M\rightarrow M$ be as above.
\begin{enumerate}
\item[(a)] Assume there exists $C$ as above such that $M=M_C$.
Let $G:C\rightarrow \Z$ be a function satisfying
\eqref{eq:Gdef}.
We define a linear map $\Xi:M\rightarrow M$ by $\Xi(m)=(\pi q^2)^{G(\lambda)} m$
for all $\lambda\in C$ and $m\in M^\lambda$.
Then $\Omega\Xi$ is a locally finite $U$-module homomorphism.

\item[(b)] Assume that $M$ is a quotient of $M(\lambda')$.
Then $\Omega\Xi$ acts as $(\pi q^2)^{G(\lambda')}$ on $M$.

\item[(c)]
For $M$ as in (a), the eigenvalues of $\Omega\Xi$ are of the form
$(\pi q^2)^c$ for $c\in\Z$.
\end{enumerate}
\end{prop}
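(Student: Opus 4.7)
My plan is to handle the three parts in order, leveraging the commutation relations for $\Omega$ already worked out in the excerpt together with the defining property of $G$.

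For part (a), I would use the identities $\wtd{J}_{-i}\tK_{-i}E_i\Omega=\tK_i\Omega E_i$ and $\Omega F_i=F_i\tK_i\Omega\wtd{J}_i\tK_i$ (valid on any $M\in\catO$), which give
\[
\Omega E_i m=(\pi_iq_i^2)^{-\langle i,\lambda+i'\rangle}E_i\Omega m,\qquad \Omega F_i m=(\pi_iq_i^2)^{\langle i,\lambda+i'\rangle}F_i\Omega m
\]
on $m\in M^\lambda$. Applying $\Xi$ and using the defining relation $G(\lambda+i')-G(\lambda)=d_i\langle i,\lambda+i'\rangle$, the discrepancy between $\Omega\Xi E_i$ and $E_i\Omega\Xi$ collapses to $\pi^{(d_i-p(i))\langle i,\lambda+i'\rangle}$, which equals $1$ by the bar-consistency assumption $d_i\equiv p(i)\pmod 2$. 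The $F_i$ calculation is symmetric, and commutation with $K_\mu,J_\mu$ is immediate since both $\Omega$ and $\Xi$ preserve weight spaces. For local finiteness, observe that $\Omega\Xi$ preserves each $M^\lambda$; given $m\in M^\lambda$, the $\catO$-condition makes $U^+m$ finite-dimensional, so its top yields finitely many $U^+$-highest vectors whose cyclic submodules cover $m$, reducing the local finiteness question to the scalar action established in (b).

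For part (b), let $v\in M(\lambda')^{\lambda'}$ be the canonical generator. By the explicit formula in Theorem~\ref{thm:rmatrix}(b),
\[
\Omega v=\sum_{\nu\in\N[I]}(-1)^{\height\nu}\pi_\nu q_\nu\sum_{b\in B_\nu}S(b^-)\,b^{*+}v.
\]
Since $b^{*+}v=0$ whenever $|b^*|\neq 0$, only the $\nu=0$, $b=1$ term survives, giving $\Omega v=v$ and therefore $\Omega\Xi v=(\pi q^2)^{G(\lambda')}v$. Because $v$ generates $M(\lambda')$ as a $U$-module and $\Omega\Xi$ is a $U$-module endomorphism by part (a), the scalar action propagates to all of $M(\lambda')$, and hence to any quotient.

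For part (c), part (a) allows us to decompose $M$ into generalized eigenspaces of $\Omega\Xi$, each of which is a $U$-submodule. Fix a nonzero generalized eigenspace $N$ with generalized eigenvalue $\mu$. Using the $\catO$-hypothesis, any nonzero $m'\in N$ can be iteratively hit by elements of $U^+$ until annihilation, producing a nonzero weight vector $m\in N$ of some weight $\lambda'$ with $E_i m=0$ for all $i\in I$. The submodule $Um$ is a quotient of the Verma module $M(\lambda')$, so by (b) the operator $\Omega\Xi$ acts on $Um$ as the honest scalar $(\pi q^2)^{G(\lambda')}$. Since $m\in N$ is both a true eigenvector and a generalized $\mu$-eigenvector, we conclude $\mu=(\pi q^2)^{G(\lambda')}$, which is of the required form with $c=G(\lambda')\in\Z$.

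The main technical obstacle is the local finiteness claim in (a): weight spaces of a general $M\in\catO$ need not be finite-dimensional, so one must argue via the finite-dimensional image of $U^+m$ and its finitely many $U^+$-highest vectors, using (b) to control the action of $\Omega\Xi$ on each of the resulting Verma quotients. The consistency condition $d_i\equiv p(i)\pmod 2$ is essential for (a), and without it the putative Casimir element $\Omega\Xi$ would not commute with the $E_i,F_i$.
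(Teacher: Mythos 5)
Your proof follows essentially the same route as the paper's (which is extremely terse): verify the commutation of $\Omega\Xi$ with $E_i$, $F_i$, $K_\mu$, $J_\mu$ using the commutation formulas established for $\Omega$ together with \eqref{eq:Gdef}; derive (b) from $\Omega v=v$ on the highest-weight generator plus the module-homomorphism property; and derive (c) by decomposing into generalized eigenspaces and extracting a singular vector.

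Two remarks on details. First, your observation that the computation in (a) collapses to $\pi^{(d_i-p(i))\ang{i,\lambda+i'}}$ and hence requires bar-consistency $d_i\equiv p(i)\pmod 2$ is correct and worth making explicit: the paper's one-line computation tacitly treats $\pi_i q_i^2$ as $(\pi q^2)^{d_i}$ (and its ``$s_i$'' is evidently a typo for $d_i$), which is only valid under \ref{subsec:Cartan}(e). Since the paper declares bar-consistency is imposed ``whenever a bar involution is involved'' and Section~\ref{sec:quasiR} is built entirely on $\bar\Delta$ and $\Theta$, this is presumably a standing hypothesis, but you are right that the stated $\Xi$ does not commute with $E_i$ otherwise. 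Second, your local finiteness argument is more roundabout than necessary and as written has a gap: the cyclic submodules generated by the ``top'' of $U^+m$ need not contain $m$, so one must iterate through a finite filtration, and your phrasing glosses over this. The cleaner standard argument avoids (b) entirely: $\Omega\Xi$ is a $\UU$-module endomorphism, hence preserves $N:=\UU m$, hence preserves $N^\lambda$; the $\catO$-condition forces $N^\lambda=\sum_{\mu}\UU^-_\mu\Uz\UU^+_\mu m$ to be a finite-dimensional space containing $m$, so $\Omega\Xi|_{N^\lambda}$ is automatically locally finite. Your dependency ordering (using only the homomorphism half of (a) to prove (b), then invoking (b) for local finiteness) is not circular, but the detour is unneeded.
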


The operator $\Omega\Xi$ is called the {\em Casimir element} of $\UU$
(though note that the Casimir element formally lives in a completion of $\UU$).

\begin{proof}
We compute that for $m\in M^\lambda$,
\begin{align*}
\Omega\Xi E_im&=\Omega (\pi q^2)^{G(\lambda+i')} E_i m
=
(\pi q^2)^{G(\lambda+i')-G(\lambda)-s_i\ang{i,\lambda+i'}} E_i\Omega\Xi m=E_i\Omega\Xi m.
\end{align*}

A similar argument applies to the $F_i$, and clearly $\Omega\Xi$ commutes with
$K_\mu$, $J_\mu$ proving the first assertion of (a).
The local finiteness claim is a standard category $\mathcal O$ type argument.
Parts (b) and (c) follow now easily.
\qed\end{proof}

\subsection{The complete reducibility in $\catO_{\mathrm{int}}$}

Recall the categories  $\catO$ and $\curlyC_{\mathrm{int}}$ from
\S \ref{subsec:catCO}-\ref{subset:intO}. Form another category $\catO_{\mathrm{int}}
:=\catO\cap \curlyC_{\mathrm{int}}$.

\begin{lem}  \label{lem:simplezero}
Let $M\in\curlyC$. Assume that $M$ is a nonzero quotient of the
Verma module $M(\lambda)$ and that $M$ is integrable. Then
\begin{enumerate}
\item[(a)]
 $\lambda\in X^+$;
 \item[(b)]
 $M_+$ and $M_-$ are either simple or zero.
 \end{enumerate}
\end{lem}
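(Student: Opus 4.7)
The plan is to prove (a) via a direct commutation calculation using the $E_i$--$F_i^{(n)}$ formula on weight vectors, and (b) by exhibiting a highest-weight vector in any nonzero submodule of $M_\pm$ via a minimal-weight argument, then pinning down its weight using the Casimir eigenvalue computation of Proposition~\ref{prop:casimirprops}.

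For part (a), let $v \in M^\lambda$ be the image of $1 \in M(\lambda)$, so $v$ is a nonzero highest weight vector generating $M$, with $E_i v = 0$ for all $i$. Fix $i \in I$: by integrability, let $n_0 \geq 0$ be the smallest integer with $F_i^{(n_0+1)} v = 0$, so that $F_i^{(n_0)} v \neq 0$. Applying $E_i$ and expanding via the formula in \S\ref{subsec:catCO}(a) with $N=1$, $M = n_0+1$, only the $t=0,1$ terms survive: the $t=0$ term vanishes since $E_i v = 0$, while the $t=1$ term equals $\pi_i^{n_0}[\ang{i,\lambda} - n_0]_i F_i^{(n_0)} v$. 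Since $F_i^{(n_0)}v \neq 0$ and $[k]_i = 0$ in $\Qq^\pi$ only for $k=0$, we conclude $\ang{i,\lambda} = n_0 \in \N$, proving $\lambda \in X^+$.

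For part (b), it suffices to treat $M_+$ (the case of $M_-$ is identical). Since $\pi$ is central in $\UU$, the projection $v_+ := \frac{1+\pi}{2} v$ generates $M_+$ whenever $M_+ \neq 0$, and $M_+^\lambda = \Qq v_+$ is one-dimensional over $\Qq$; so it suffices to show any nonzero $\UU$-submodule $N \subseteq M_+$ meets $M_+^\lambda$. To produce a $\lambda$-weight vector in $N$, I would choose $\nu \in \N[I]$ minimal with $N^{\lambda-\nu} \neq 0$, set $\mu = \lambda - \nu$, and pick any nonzero $n \in N^\mu$. Then $E_i n \in N^{\mu + i'}$ vanishes for every $i$: either $\mu + i'$ is not a weight of $M(\lambda)$ (when $\nu_i = 0$), or it corresponds to $\nu - i < \nu$ in $\N[I]$, contradicting minimality. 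Hence $\UU n$ is an integrable quotient of $M(\mu)$, so $\mu \in X^+$ by part (a).

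To finish, I would invoke Proposition~\ref{prop:casimirprops}(b): the Casimir $\Omega\Xi$ acts as the scalar $(\pi q^2)^{G(\lambda)}$ on $M$ and as $(\pi q^2)^{G(\mu)}$ on the submodule $\UU n \subseteq M$. After specializing to $M_+$ (where $\pi = 1$), these scalars lie in $\Qq(q)$ and must agree, forcing $G(\mu) = G(\lambda)$. Together with $\mu \leq \lambda$ and $\mu, \lambda \in X^+ \cap C$, Lemma~\ref{lem:Gcomparison} then yields $\mu = \lambda$, so $n \in N \cap M_+^\lambda$, and consequently $N \supseteq \UU v_+ = M_+$. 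The main obstacle I expect is deploying the Casimir argument correctly -- matching the two eigenvalues of $\Omega\Xi$ and invoking the injectivity in Lemma~\ref{lem:Gcomparison} to pass from $G(\mu) = G(\lambda)$ to $\mu = \lambda$ -- while the remaining bookkeeping closely parallels Lusztig's treatment in the non-super case.
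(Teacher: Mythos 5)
Your proof is correct and follows essentially the same route as the paper, which defers to Lusztig's Lemma~6.2.1: a rank-one commutation computation for (a), and a minimal-weight-vector plus Casimir-eigenvalue argument for (b), applied after projecting to $M_\pm$. One small point worth tightening in (a): since $\Qq^\pi$ has zero divisors, the step from $[\ang{i,\lambda}-n_0]_i\,F_i^{(n_0)}v=0$ and $F_i^{(n_0)}v\neq 0$ to $\ang{i,\lambda}=n_0$ requires that $[k]_i$ be a \emph{unit} in $\Qq^\pi$ for $k\neq 0$, not merely nonzero; this does hold, because $[k]_i$ specializes to a nonzero element of the field $\Q(q)$ at both $\pi=1$ and $\pi=-1$.
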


\begin{proof}
It is clear that (a) holds by some rank one consideration.
An argument similar to that for \cite[Lemma~6.2.1]{Lu} shows that
if $\dim_\Qq M^\la=1$ then $M$ is simple; in this case, $M$ must be
equal to either $M_+$ or $M_-$. Otherwise, $\dim_\Qq M^\la=2$,
then $\dim_\Qq M_+^\la =\dim_\Qq M_1^\la =1$, and we repeat
the argument above for the integrable $\UU$-module $M_\pm$.
\qed\end{proof}

\begin{thm}
Let $M$ be a $\UU$-module in $\catO_{\mathrm{int}}$. Then $M$ is a
sum of simple $\UU$-submodules.
\end{thm}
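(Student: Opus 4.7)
The plan is to reduce to a single Casimir generalized-eigenspace on which all possible highest weights form an antichain, and then argue that the socle exhausts the module. First, I decompose $M$ using the $\Z[I]$-coset decomposition \eqref{eq:cosetdecomp} together with the $\pi$-sign splitting $M = M_+ \oplus M_-$ from \S\ref{subsec:catCO}, reducing to the case $M = M_C$ with $\pi$ acting by a fixed $\epsilon \in \{\pm 1\}$. Fixing $G \colon C \to \Z$ as in \eqref{eq:Gdef}, Proposition~\ref{prop:casimirprops} shows that $\Omega\Xi$ is a locally finite $\UU$-module endomorphism whose eigenvalues have the form $(\epsilon q^2)^c$; since $\Omega\Xi$ commutes with the $\UU$-action, its generalized eigenspaces are submodules, so I may further assume that $(\Omega\Xi - (\epsilon q^2)^c)$ is locally nilpotent on $M$ for a single $c \in \Z$.

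Next I observe that if $v \in M^\lambda$ is a nonzero highest weight vector, then only the $\nu = 0$ summand of $\Omega$ (see Theorem~\ref{thm:rmatrix}(b)) survives, so $\Omega v = v$ and $\Omega\Xi$ acts on $v$ by the \emph{exact} scalar $(\epsilon q^2)^{G(\lambda)}$. Since $\epsilon q^2$ has infinite multiplicative order in $\Qq$, this forces $G(\lambda) = c$; combined with Lemma~\ref{lem:simplezero}, this gives $\lambda \in X^+$. Hence every highest weight appearing in $M$ lies in
\[ A := \{ \lambda \in X^+ \cap C : G(\lambda) = c \}, \]
which is an antichain under the partial order on $X$ by Lemma~\ref{lem:Gcomparison}. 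Moreover, Lemma~\ref{lem:simplezero} shows that each highest weight vector of weight $\lambda$ generates a simple submodule isomorphic to $V(\lambda)_\epsilon$.

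Let $N \subseteq M$ be the sum of all simple submodules; then $N = \bigoplus_{\lambda \in A} N_\lambda$ with each isotypic component $N_\lambda$ a direct sum of copies of $V(\lambda)_\epsilon$. I claim $N = M$. If not, $Q := M/N$ is a nonzero object of $\catO_{\mathrm{int}}$, so it contains a highest weight vector $\bar w$ of some weight $\mu$; applying the previous paragraph to $Q$ gives $\mu \in A$. Lift $\bar w$ to $w \in M^\mu$; then $E_i w \in N^{\mu + i'}$ for each $i \in I$. But $N_\lambda^{\mu+i'} \neq 0$ requires $\lambda \geq \mu + i' > \mu$, so $\lambda, \mu \in A$ with $\lambda > \mu$ contradicts the antichain property. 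Hence $E_i w = 0$ for all $i$, meaning $w$ is itself a highest weight vector; therefore $\UU w$ is a simple submodule of $M$ contained in $N$, forcing $\bar w = 0$, a contradiction.

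The main obstacle is the third paragraph: producing a genuine (not merely modulo $N$) highest weight vector $w$. This hinges precisely on the antichain property from Lemma~\ref{lem:Gcomparison}, which prevents the weight space $N^{\mu + i'}$ from being populated by any isotypic component of $N$. Without this input, the Casimir decomposition alone would separate generalized eigenspaces but would not yield a decomposition into simple submodules.
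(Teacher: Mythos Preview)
Your proposal is correct and follows essentially the same strategy as the paper: reduce via the $\pi$-sign splitting and the coset decomposition \eqref{eq:cosetdecomp}, pass to a single generalized eigenspace of the Casimir operator $\Omega\Xi$, let $N$ (the paper's $M'$) be the submodule generated by singular vectors, and derive a contradiction from $M/N \neq 0$ by lifting a highest weight vector and invoking Lemma~\ref{lem:Gcomparison}. The only cosmetic difference is in the final step: the paper lifts $\bar w$ to $\tilde m$, locates a singular vector $m_2$ of weight $\eta \geq \mu$ inside $\UU^+\tilde m$, and then applies Lemma~\ref{lem:Gcomparison} to force $\eta = \mu$; you instead pre-package the conclusion of Lemma~\ref{lem:Gcomparison} as the antichain property of $A$ and use it to see directly that $N^{\mu+i'}=0$, so the lift $w$ is already singular. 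Both routes are the same idea and rely on the same ingredients.
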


\begin{proof}

Note that as discussed in \S \ref{subsec:catCO} we may assume that $M=M_+$ or $M=M_-$.
Since the case for $M_+$ follows from \cite[Theorem~6.2.2]{Lu}, it is
enought to prove the theorem for $M=M_-$. Virtually the same argument
as in {\em loc. cit.} holds, which we will now sketch.

Using \eqref{eq:cosetdecomp},
we may further assume there is a coset $C$ of $\Z[I]$ in $X$
such that $M=M_C$. Then we may pick a 
function $G$ satisfying \eqref{eq:Gdef}
and avail ourselves of Proposition \ref{prop:casimirprops}.
Since the Casimir element commutes with the $\UU$-action, we may further
assume that $M$ lies in a generalized eigenspace of the Casimir element.

Consider the set of singular vectors of $M$(that is, the set
of vectors $m\in M$ for which $E_im=0$ for all $i\in I$)
and let $M'$ be the submodule they generate.
Then each homogeneous singular vector generates a simple
submodule by virtue of Lemma \ref{lem:simplezero},
so $M'$ is a sum of simple modules. 

It remains to show that
$M=M'$, so take $M''=M/M'$ and suppose $M''\neq 0$.
Then there is a maximal weight $\lambda\in C$ such that
$M''^\lambda\neq 0$. Then the Casimir element acts on 
the submodule generated by a nonzero 
$m_1\in M''^\lambda$ by $(-q^2)^{G(\lambda)}$ 
by Proposition \ref{prop:casimirprops},
and so in particular $M$ must lie in the generalized
$(-q^2)^{G(\lambda)}$-eigenspace of the Casimir element.

On the other hand, $m$ is the image of a vector $\tilde{m}\in M\setminus M'$.
The $\Up$-module generated by $\tilde m$ contains a singular vector $m_2$ 
of weight $\eta\geq \lambda$, and the Casimir element acts on the module
generated by $m_2$ as $(-q^2)^{G(\eta)}$. Then $G(\eta)=G(\lambda)$
and $\eta\geq \lambda$, so by Lemma \ref{lem:Gcomparison}
$\eta=\lambda$. But the $\tilde{m}$ is a singular vector, contradicting
that our choice of $m_1$ was nonzero.
\qed\end{proof}

\begin{cor}
\begin{enumerate}
\item[(a)]
For $\lambda\in X^+$, the $\UU$-modules $V(\lambda)_+$ and $V(\la)_-$ are simple
objects of $\catO_{\mathrm{int}}$.

\item[(b)]
For $\lambda,\lambda'\in X^+$, the $\UU$-modules $V(\lambda)_+$ and
$V(\lambda')_+$, and respectively $V(\lambda)_-$ and
$V(\lambda')_-$, are isomorphic if and only if $\lambda=\lambda'$.
(Clearly, $V(\la)_+$ and $V(\la')_-$ are non-isomorphic.)

\item[(c)]
Any integrable module in $\catO$ is a direct sum of simple modules
of the form $V(\lambda)_\pm$ for various $\lambda\in X^+$.
\end{enumerate}
\end{cor}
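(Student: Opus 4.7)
The plan is to deduce all three parts of the corollary from the complete reducibility theorem just proved, together with Lemma \ref{lem:simplezero} and the basic structure of Verma modules.

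For part (a), I would first note that $V(\lambda) = M(\lambda)/\curlyT$ is a nonzero integrable quotient of $M(\lambda)$ by the preceding proposition, and that $V(\lambda)_\pm$ is again a quotient of $M(\lambda)$ via the projection $V(\lambda) \twoheadrightarrow V(\lambda)_\pm$. To apply Lemma \ref{lem:simplezero} I must check both summands are nonzero: since $M(\lambda)^\lambda = \Qq^\pi \cdot 1$ and the generators $\theta_i^{\ang{i,\lambda}+1}\cdot 1$ of $\curlyT$ all lie in strictly lower weight spaces, we get $V(\lambda)^\lambda = \Qq^\pi v_\lambda^+$, whence $V(\lambda)_\pm^\lambda = (\pi\pm 1)\Q\cdot v_\lambda^+ \neq 0$. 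Lemma \ref{lem:simplezero}(b) then forces $V(\lambda)_\pm$ to be simple.

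For part (b), any isomorphism of weight modules carries highest weight spaces to highest weight spaces, so $V(\lambda)_\pm \cong V(\lambda')_\pm$ forces $\lambda = \lambda'$ by comparing highest weights. An isomorphism $V(\lambda)_+ \cong V(\lambda')_-$ would require $\pi$ to act simultaneously as $+1$ and $-1$, which is impossible on a nonzero module.

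For part (c), let $M \in \catO_{\mathrm{int}}$. By the complete reducibility theorem $M$ is a sum of simple submodules, so it suffices to identify each simple $N \subseteq M$. Since $N \in \catO$, there exists a nonzero singular vector $m \in N^\lambda$ (take any vector of maximal weight among the finitely many occurring in the $\Up$-orbit of a generator); the proposition preceding \S\ref{subset:intO} yields a surjection $M(\lambda) \twoheadrightarrow N$. Then $N$ is an integrable quotient of $M(\lambda)$, and Lemma \ref{lem:simplezero} gives $\lambda \in X^+$ together with $N = N_+$ or $N = N_-$ (the other summand being zero by simplicity of $N$). The main remaining point is to show this surjection factors through $V(\lambda)$, i.e. that $\curlyT \subseteq \ker(M(\lambda)\twoheadrightarrow N)$; this is the standard rank-one argument: integrability of $N$ combined with the $\osp(1|2)$- or $\mathfrak{sl}_2$-theory for each simple root $i$ forces $F_i^{\ang{i,\lambda}+1}\cdot m = 0$, hence all generators of $\curlyT$ act as zero. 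Combined with part (a) and simplicity of $N$, we conclude $N \cong V(\lambda)_+$ or $N\cong V(\lambda)_-$ according to whether $\pi$ acts as $+1$ or $-1$ on $N$. The main obstacle is this last step — verifying that $\curlyT$ automatically dies in any integrable highest-weight quotient — but it reduces to the rank-one assertion already handled in the proof of Lemma \ref{lem:simplezero}(a).
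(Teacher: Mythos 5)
Your argument is correct and follows essentially the same path as the paper, which simply cites Lusztig's Corollary~6.2.3 and points to Lemma~\ref{lem:simplezero}; in particular your use of the highest-weight-space argument to see that $V(\lambda)_\pm$ are nonzero and your reduction of each simple summand to a highest-weight quotient are exactly Lusztig's steps. The one small redundancy is in part (c): instead of building a surjection $M(\lambda)\twoheadrightarrow N$ and then arguing separately that $\curlyT$ annihilates the singular vector, you could directly invoke the last proposition of \S\ref{subset:intO}, which already produces the morphism $V(\lambda)\to N$ from a singular vector in an integrable module.
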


\begin{proof}
The argument in \cite[Corollary~6.2.3]{Lu} holds using our Lemma~\ref{lem:simplezero} above.
\qed\end{proof}

\subsection{Character formula}
\label{subsec:char}

Denote by $\rho \in X$ such that $\langle i,\rho \rangle =1$ for all
$i\in I$. We claim the following character formula of $V(\la)$
for {\em every} $\la\in X^+$:
\begin{align}
\label{eq:chform}
\text{ch}\, V(\la)_\pm = \frac{\sum_{w\in W} (-1)^{\ell(w)}
e^{w(\la+\rho)-\rho}}{\sum_{w\in W} (-1)^{\ell(w)} e^{w(\rho)-\rho}
}.
\end{align}
This is equivalent to claiming $V(\la)$ is always a $\Qq^\pi$-free module for each $\la \in X^+$.
This character formula holds for $V(\la)_+$ with $\la\in X^+$ by a theorem of Lusztig \cite{Lu1}. A proof of this formula for $V(\la)_-$ is possible, but requires techniques outside the scope of this paper.

Assume now that $\la \in X^+$ satisfies an {\em evenness condition}
\begin{align}
\label{eq:evencond}
\langle i, \la \rangle \in 2\Z_+, \quad \forall i\in I_\one.
\end{align}
Then the action of $\UU$ on $V(\la)$ factors through an action of
the algebra  $\UU/\mathcal J$ 
(see \S\ref{subsec:specializations}), and \eqref{eq:chform}
holds by \cite[Theorem~4.9]{BKM}  on the characters of integrable modules of the usual
quantum groups.
The irreducible integrable modules of the corresponding Kac-Moody
superalgebras were known \cite{Kac} to be parametrized by highest
weights $\la \in X^+$ satisfying \eqref{eq:evencond}. Hence, for $\la \in
X^+$ which does not satisfy \eqref{eq:evencond}, 
the usual $q$-deformation argument
cannot be applied directly to $V(\la)_-$.


Note there are always weights $\la$ satisfying \eqref{eq:evencond} 
which are large
enough relative to every $i\in I$. Therefore, the same type of
arguments as in \cite[Chapter 33]{Lu} show that the algebra $\bf f$
and hence $\UU$ admit the following equivalent formulations.

\begin{prop}
 \label{prop:f=Serre}
The algebra $\bf f$ is isomorphic to the algebra generated by
$\theta_i, i\in I$, subject to the quantum Serre relation as in
Proposition~\ref{prop:quantserre}.
\end{prop}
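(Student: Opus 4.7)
Let $\tilde{\bf f}$ denote the associative $\Qq^\pi$-algebra generated by $\theta_i$ ($i\in I$) modulo the two-sided ideal generated by the quantum Serre relations of Proposition~\ref{prop:quantserre}. Since the same Serre relations hold in $\bf f$, there is a canonical surjective algebra homomorphism $\phi:\tilde{\bf f}\twoheadrightarrow \bf f$, which is manifestly $\N[I]$-graded. To prove the proposition it suffices to show that $\phi_\nu:\tilde{\bf f}_\nu\twoheadrightarrow \bf f_\nu$ is injective for every $\nu\in\N[I]$, i.e.\ that $\dim_{\Qq^\pi}\tilde{\bf f}_\nu\leq \dim_{\Qq^\pi}\bf f_\nu$.

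The plan is to mimic the argument in \cite[Chapter~33]{Lu}: realize both algebras as acting on suitable integrable highest weight modules, and compare dimensions via the character formula \eqref{eq:chform}. Following Section~\ref{sec:algebraU} verbatim, one builds a ``Serre-defined'' covering group $\tilde{\UU}$ on generators $E_i,F_i,K_\mu,J_\mu$ satisfying relations (a)--(f) together with Serre relations among the $E_i$'s and among the $F_i$'s; the triangular decomposition and the construction of Verma modules $\tilde{M}(\lambda)$ and their integrable quotients $\tilde{V}(\lambda)$ from Section~\ref{subset:intO} go through without change, and there is a canonical surjection $\tilde{\UU}\twoheadrightarrow \UU$ intertwining these constructions. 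In particular, acting on a highest weight vector of $\tilde{V}(\lambda)$ yields a surjection $\tilde{\bf f}_\nu \twoheadrightarrow \tilde{V}(\lambda)^{\lambda-\nu}$.

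Now fix $\nu$. I would choose $\lambda\in X^+$ satisfying the evenness condition \eqref{eq:evencond} and with $\langle i,\lambda\rangle$ large enough (depending on $\nu$) that $s_i(\lambda+\rho)-\rho \not\geq \lambda-\nu$ for every $i\in I$, so that in the Weyl--Kac numerator of \eqref{eq:chform} only the term $w=1$ contributes at the weight $\lambda-\nu$. Such $\lambda$ exist, and for them $\dim V(\lambda)^{\lambda-\nu}$ equals the Verma dimension $\dim_{\Qq^\pi}\bf f_\nu$. On the other hand, because \eqref{eq:evencond} holds, the action of $\tilde{\UU}$ on $\tilde{V}(\lambda)$ factors through the quotient by the ideal generated by $\{J_\mu-1\}$, i.e.\ through the usual quantum (super)group defined by Serre relations, whose integrable module of highest weight $\lambda$ has character given by \eqref{eq:chform} by \cite[Theorem~4.9]{BKM} (combined with Lusztig \cite{Lu1} on the $\pi=1$ side). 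Thus $\tilde{V}(\lambda)^{\lambda-\nu} \cong V(\lambda)^{\lambda-\nu}$ and $\dim \tilde{V}(\lambda)^{\lambda-\nu} = \dim_{\Qq^\pi}\bf f_\nu$.

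Assembling the surjections
\[
\tilde{\bf f}_\nu\ \twoheadrightarrow\ \bf f_\nu\ \twoheadrightarrow\ V(\lambda)^{\lambda-\nu}\ =\ \tilde{V}(\lambda)^{\lambda-\nu}
\]
forces all three spaces to have the same (finite) $\Qq^\pi$-dimension, and hence $\phi_\nu$ is an isomorphism. The main technical obstacle is verifying that the evenness condition on $\lambda$ is really what allows one to invoke \cite{BKM} (rather than needing the yet-unproven character formula for $V(\lambda)_-$ at arbitrary $\lambda$); this is exactly the point at which one needs the freedom to choose $\lambda$ with $\langle i,\lambda\rangle\in 2\Z_+$ for $i\in I_\one$, together with the observation made just before Proposition~\ref{prop:f=Serre} that such $\lambda$ can always be found arbitrarily deep in the dominant chamber relative to any fixed $\nu$.
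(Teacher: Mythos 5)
Your overall strategy is exactly the ``same type of arguments as in \cite[Chapter~33]{Lu}'' that the paper invokes as its one-sentence proof: compare $\tilde{\bf f}$ and $\bf f$ by realizing both on integrable highest-weight modules, choosing a highest weight that is (i) deep enough relative to $\nu$ so that only the $w=1$ term of the Weyl--Kac numerator contributes and (ii) satisfies the evenness condition \eqref{eq:evencond} so that \cite[Theorem~4.9]{BKM} applies. This is the intended argument, and the displayed weight condition $s_i(\lambda+\rho)-\rho\not\geq\lambda-\nu$ is exactly right.

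However, the concluding dimension count is not yet tight. The displayed chain of surjections
\[
\tilde{\bf f}_\nu\ \twoheadrightarrow\ \bf f_\nu\ \twoheadrightarrow\ V(\lambda)^{\lambda-\nu}\ =\ \tilde{V}(\lambda)^{\lambda-\nu}
\]
only yields $\dim\tilde{\bf f}_\nu\geq\dim\bf f_\nu\geq\dim V(\lambda)^{\lambda-\nu}$, and knowing that the last equals $\dim\bf f_\nu$ collapses only the \emph{second} inequality. What you need, and what is missing, is the parallel statement on the Serre side: for $\lambda$ as chosen, the canonical surjection $\tilde{\bf f}_\nu\twoheadrightarrow\tilde V(\lambda)^{\lambda-\nu}$ is \emph{injective} as well, so that $\dim\tilde{\bf f}_\nu=\dim\tilde V(\lambda)^{\lambda-\nu}$. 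This holds for the same reason it holds for $\bf f$: the kernel of $\tilde M(\lambda)\to\tilde V(\lambda)$ is the $\tilde{\bf f}$-submodule generated by $F_i^{\langle i,\lambda\rangle+1}v_\lambda$, whose weights all lie $\leq\lambda-(\langle i,\lambda\rangle+1)i$ for some $i$, and your condition on $\lambda$ ensures none of these is $\geq\lambda-\nu$. Once this is observed, the chain
\[
\dim\tilde{\bf f}_\nu=\dim\tilde V(\lambda)^{\lambda-\nu}=\dim V(\lambda)^{\lambda-\nu}=\dim{\bf f}_\nu
\]
closes, and $\phi_\nu$ is an isomorphism. (A secondary point worth spelling out: the equality $\dim\tilde V(\lambda)^{\lambda-\nu}=\dim V(\lambda)^{\lambda-\nu}$ really uses that \emph{both} modules carry the BKM character; for $V(\lambda)$ one invokes the simplicity established in \S\ref{sec:quasiR} together with the uniqueness of the simple integrable highest-weight module, so that $V(\lambda)$ is identified with the BKM module.)
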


\begin{prop}
The algebra $\UU$ is isomorphic to the algebra generated by $ E_i,
F_i$ $(i\in I)$ and $J_{\mu}, K_\mu$ $(\mu\in Y),$ subject to the
relations~\ref{sec:Udef}(a)-(f) and the quantum Serre relations for
$E_i$'s as well as for $F_i$'s (in place of $\theta_i$'s in
Proposition~\ref{prop:quantserre}).
\end{prop}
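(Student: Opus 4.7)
The plan is to lift Proposition~\ref{prop:f=Serre} to $\UU$ using the triangular decomposition machinery established earlier in Section~\ref{sec:algebraU}. Let $\widetilde{\UU}$ denote the associative $\Qq^\pi$-algebra defined by the generators $E_i,F_i,J_\mu,K_\mu$ subject to the relations \ref{sec:Udef}(a)--(f) together with the quantum Serre relations for the $E_i$'s and the $F_i$'s. First, I would verify that every defining relation of $\widetilde{\UU}$ holds in $\UU$: relations \ref{sec:Udef}(a)--(f) by construction, and the Serre relations by Proposition~\ref{prop:quantserre}. This gives a unique surjective algebra homomorphism $\phi:\widetilde{\UU}\twoheadrightarrow\UU$ sending each generator to its namesake; it then suffices to show $\phi$ is injective.

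Next I would identify the ``triangular pieces'' of $\widetilde{\UU}$. Let $\widetilde{\UU}^+$, $\widetilde{\UU}^-$, $\widetilde{\UU}^0$ denote the subalgebras of $\widetilde{\UU}$ generated by $\{E_i\}$, $\{F_i\}$, and $\{J_\mu,K_\mu\}$ respectively. Because the $E_i$'s in $\widetilde{\UU}$ satisfy the Serre relations, and Proposition~\ref{prop:f=Serre} identifies these as a full presentation of $\ff$, there is a surjective algebra homomorphism $\psi^+:\ff\twoheadrightarrow\widetilde{\UU}^+$ with $\theta_i\mapsto E_i$. The composition $\ff\xrightarrow{\psi^+}\widetilde{\UU}^+\xrightarrow{\phi|}\Up$ equals the canonical isomorphism $\ff\xrightarrow{\sim}\Up$, so $\psi^+$ is itself an isomorphism; the symmetric argument gives $\ff\cong\widetilde{\UU}^-$, while relations \ref{sec:Udef}(a)--(b) alone identify $\widetilde{\UU}^0$ with the group algebra $\Uz\cong\Qq^\pi[Y\times(Y/2Y)]$.

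Now I would establish the triangular decomposition for $\widetilde{\UU}$. The multiplication map
\[
m:\widetilde{\UU}^-\otimes\widetilde{\UU}^0\otimes\widetilde{\UU}^+\longrightarrow\widetilde{\UU}
\]
is surjective by a word-rewriting argument modeled on the proof of Proposition~\ref{prop:U'triang}: relations \ref{sec:Udef}(c)--(f) suffice to move every $F_i$ to the left of every $J_\mu$ and $K_\mu$, and every $J_\mu,K_\mu$ to the left of every $E_j$. Injectivity of $m$ (and hence of $\phi$) then follows from the commutative square whose top row is $m$, whose right column is $\phi$, whose left column is the isomorphism $\widetilde{\UU}^-\otimes\widetilde{\UU}^0\otimes\widetilde{\UU}^+\xrightarrow{\sim}\Um\otimes\Uz\otimes\Up$ built from the identifications of the previous paragraph, and whose bottom row is the triangular-decomposition isomorphism of the corollary following Proposition~\ref{prop:U'triang}. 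Since the bottom and left arrows are isomorphisms, the composition $\phi\circ m$ is an isomorphism, which forces $m$ and $\phi$ to both be isomorphisms. The analogous statement for $\UUpr$ (before imposing the radical $\mathcal{I}$) is proved in Proposition~\ref{prop:U'triang}, so the present statement is really its ``Serre-quotient'' version.

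The only real point of caution is the word-rewriting establishing surjectivity of $m$, but this is entirely parallel to the computation already done for $\UUpr$ in Proposition~\ref{prop:U'triang} and involves no new ingredients. All the genuine content of the proposition has already been absorbed into Proposition~\ref{prop:f=Serre}, whose proof (as sketched in \S\ref{subsec:char}) invokes a careful study of integrable modules $V(\la)$ for dominant weights $\la$ satisfying the evenness condition~\eqref{eq:evencond} together with the characterization in Proposition~\ref{prop:x=0}; the passage from $\ff$ to $\UU$ is then a purely formal consequence of the triangular decomposition.
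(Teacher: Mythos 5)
Your proof is correct and essentially matches the approach the paper (tacitly) takes: the paper itself does not spell out an argument but defers to ``the same type of arguments as in \cite[Chapter 33]{Lu},'' and those arguments are precisely what you reconstruct. The key observation that all the real content lives in Proposition~\ref{prop:f=Serre}, with the passage to $\UU$ being a purely formal consequence of comparing triangular decompositions, is exactly right. One small precision: relations \ref{sec:Udef}(d)--(f) (not (c)--(f)) are what drive the word-rewriting surjectivity argument — relation (f) pushes $E_i$ past $F_j$ modulo $\Uz$-terms, and (d), (e) sort $E$'s and $F$'s past the toral generators; (a)--(c) merely normalize the $K$'s and $J$'s among themselves — but this does not affect the argument.
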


As a consequence of \eqref{eq:chform} and
Proposition~\ref{prop:f=Serre}, the character of $\UU^-$ is given by
\begin{align}
\text{ch}\, \UU^- = \frac{1}{\sum_{w\in W} (-1)^{\ell(w)}
e^{w(\rho)-\rho} } =\prod_{\alpha >0}
(1-e^{-\alpha})^{(-1)^{1+p(\alpha)} \dim \mathfrak g_{\alpha}},
\end{align}
where $\mathfrak g$ denotes the Kac-Moody superalgebra of type
$(I,\cdot)$ (cf. \cite{Kac}), ``$\alpha>0$" denotes positive roots of
$\mathfrak g$, $p(\cdot)$ denotes the parity function, and
$\mathfrak g_\alpha$ denotes the $\alpha$-root space.

%
%

\section{Higher Serre relations}
\label{sec:higherSerre}

In this section we formulate and establish the higher Serre
relations, which will be instrumental in determining the action of a
braid group on a quantum covering group and integrable modules in a future work.

\subsection{Higher Serre elements}
 \label{subsec:higher-elt}

For $i,j\in I$, and $n,m\geq 0$, set
\begin{align*}
p(n,m;i,j)=mnp(i)p(j)+{m\choose 2}p(i).
\end{align*}

For $i\neq j$, define the elements
\begin{align}
\label{E:eijnm}
 e_{i,j;n,m}
&=\sum_{r+s=m}(-1)^r\pi_i^{p(n,r;i,j)}(\pi_iq_i)^{-r(n\ang{i,j'}+m-1)}E_i^{(r)}E_j^{(n)}E_i^{(s)},
 \\
\label{E:e'ijnm}
 e'_{i,j;n,m}
&=\sum_{r+s=m}(-1)^r\pi_i^{p(n,r;i,j)}(\pi_iq_i)^{-r(n\ang{i,j'}+m-1)}E_i^{(s)}E_j^{(n)}E_i^{(r)},
 \\
\label{E:fijnm}
f_{i,j;n,m}
&=\sum_{r+s=m}(-1)^r\pi_i^{p(n,r;i,j)}q_i^{r(n\ang{i,j'}+m-1)}F_i^{(s)}F_j^{(n)}F_i^{(r)},
 \\
\label{E:f'ijnm}
f'_{i,j;n,m}
&=\sum_{r+s=m}(-1)^r\pi_i^{p(n,r;i,j)}q_i^{r(n\ang{i,j'}+m-1)}F_i^{(r)}F_j^{(n)}F_i^{(s)}.
\end{align}
When there is no confusion by fixing $i$ and $j$, we will abbreviate
$e_{i,j;n,m}=e_{n,m}$, $e'_{i,j;n,m}=e'_{n,m}$,
$f_{i,j;n,m}=f_{n,m}$, $f'_{i,j;n,m}=f'_{n,m}$.  Note that we have
the equalities
\begin{align}\label{eq:higherserreids}
e'_{n,m}=\sigma(e_{n,m}),\;\;\;f'_{n,m}=\sigma\omega^2(f_{n,m}),\;\;\;
e_{n,m}=\om(\overline{f'_{n,m}}),\quad
e'_{n,m}=\om(\overline{f_{n,m}}).
\end{align}

\subsection{Commutations with divided powers}

\begin{lem}\label{L:HigherSerre1} The following hold:
\begin{enumerate}
\item[(a)]
$\displaystyle
-q_i^{-n\ang{i,j'}-2m}\pi_i^{m+np(j)}E_ie_{n,m}+e_{n,m}E_i=[m+1]_ie_{n,m+1}$.
\item[(b)]
$\displaystyle
-F_ie_{n,m}+\pi_i^{m+np(j)}e_{n,m}F_i=[-n\ang{i,j'}-m+1]_i\pi_i^{np(j)+1}\wtd{K}_i^{-1}e_{n,m-1}$.
\end{enumerate}
\end{lem}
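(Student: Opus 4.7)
The plan is to prove both parts by direct expansion from the definition~\eqref{E:eijnm} of $e_{n,m}$, using only divided-power identities, the $(q,\pi)$-integer identities from \S\ref{sec:gaussids}, and (for part (b)) Proposition~\ref{prop:EFcommutation}(a) applied to $e_{n,m}$ regarded as an element of $\mathbf f$ via the isomorphism $\mathbf f\cong\UU^+$.

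For part (a), both $E_ie_{n,m}$ and $e_{n,m}E_i$ only require absorbing $E_i$ into a divided power of itself that already sits on one side of $E_j^{(n)}$; no nontrivial commutation past $E_j^{(n)}$ is needed. Using $E_i\cdot E_i^{(r)}=[r+1]_iE_i^{(r+1)}$ and $E_i^{(s)}\cdot E_i=[s+1]_iE_i^{(s+1)}$ and reindexing both sums to run over $r+s=m+1$, the left-hand side collapses to a single sum $\sum_{r+s=m+1}c_{r,s}E_i^{(r)}E_j^{(n)}E_i^{(s)}$. The content of (a) is the identity $c_{r,s}=[m+1]_i(-1)^r\pi_i^{p(n,r;i,j)}(\pi_iq_i)^{-r(n\ang{i,j'}+m)}$, which after cancelling the common factor $(-1)^r\pi_i^{p(n,r;i,j)}(\pi_iq_i)^{-r(n\ang{i,j'}+m-1)}$ reduces to
\[
\pi_i^r[r]_i+q_i^{m+1}[s]_i=[m+1]_i\pi_i^rq_i^{m+1-r}\qquad(r+s=m+1),
\]
a direct consequence of the definition of $[m+1]_i$. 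The $\pi_i$-bookkeeping uses $p(n,r;i,j)-p(n,r-1;i,j)=np(i)p(j)+(r-1)p(i)$ together with $p(i)^2\equiv p(i)\pmod 2$, $\pi_i^2=1$, and the identity $\pi_i^{n\ang{i,j'}}=1$ forced by super-Cartan condition~\ref{subsec:Cartan}(d).

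For part (b), I apply Proposition~\ref{prop:EFcommutation}(a) to $e_{n,m}\in\mathbf f$. A short check gives $\pi_i^{p(e_{n,m})}=\pi_i^{m+np(j)}$, matching the prefactor in the statement. It then remains to identify the twisted derivations $r_i(e_{n,m})$ and ${}_ir(e_{n,m})$. Applying the Leibniz rules from \S1.5 to each summand $\theta_i^{(r)}\theta_j^{(n)}\theta_i^{(s)}$ in \eqref{E:eijnm}, and using $r_i(\theta_i^{(s)})={}_ir(\theta_i^{(s)})=q_i^{s-1}\theta_i^{(s-1)}$ together with $r_i(\theta_j^{(n)})={}_ir(\theta_j^{(n)})=0$, each derivation splits into two reindexed sums over $r+s=m-1$. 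In the computation of $r_i(e_{n,m})$ the two subsums cancel termwise (using $\pi_i^{n\ang{i,j'}}=1$ and $\pi_i^{2np(j)}=1$), yielding $r_i(e_{n,m})=0$. The two subsums in ${}_ir(e_{n,m})$ instead combine, via the same collapse on $r+s=m-1$, into a scalar multiple of $e_{n,m-1}$ with coefficient $-\pi_i^{np(j)}(\pi_iq_i-q_i^{-1})[n\ang{i,j'}+m-1]_i$. Substituting $r_i(e_{n,m})=0$ and this expression for ${}_ir(e_{n,m})$ into Proposition~\ref{prop:EFcommutation}(a), cancelling $\pi_iq_i-q_i^{-1}$, and rewriting $[n\ang{i,j'}+m-1]_i=-\pi_i^{m-1}[-n\ang{i,j'}-m+1]_i$ (using $\pi_i^{n\ang{i,j'}}=1$, so the exponent has the same parity as $m-1$), yields the claimed right-hand side of (b).

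The main obstacle is the careful bookkeeping of $\pi_i$-signs under the repeated reindexing; all simplifications ultimately rely on $\pi_i^2=1$, $p(i)^2\equiv p(i)\pmod 2$, and $\pi_i^{n\ang{i,j'}}=1$ (from condition~\ref{subsec:Cartan}(d)). These identities collapse every would-be dependence on the summation indices $r,s$, leaving coefficients that depend only on $m$, $n$, and $\ang{i,j'}$.
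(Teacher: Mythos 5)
Your proof of part (a) is essentially the paper's argument: expand $E_ie_{n,m}$ and $e_{n,m}E_i$, absorb $E_i$ into the adjacent divided power, reindex to $r+s=m+1$, and reduce to a $(q,\pi)$-integer identity. Your normalized identity $\pi_i^r[r]_i+q_i^{m+1}[s]_i=[m+1]_i\pi_i^rq_i^{m+1-r}$ is a correct restatement of the paper's $(\pi_iq_i)^{r-(m+1)}\pi_i^{r-1+m}[r]_i+(\pi_iq_i)^r[s]_i=[m+1]_i$ (after multiplying through by $q_i^{m+1}$), so this part is fine.

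For part (b) you take a genuinely different route than the paper. The paper expands $F_iE_i^{(r)}E_j^{(n)}E_i^{(s)}$ directly via the $E$-$F$ commutation on divided powers and collects terms; you instead compute the twisted derivations $r_i(e_{n,m})$ and ${}_ir(e_{n,m})$ of the element $e_{n,m}\in\mathbf f$ and feed them into Proposition~\ref{prop:EFcommutation}(a). The structural observation driving this — that $r_i(e_{n,m})=0$ and ${}_ir(e_{n,m})$ is a scalar multiple of $e_{n,m-1}$ — is correct, conceptually clean, and arguably explains the lemma better than the paper's brute-force expansion. However, there is an error in your claimed scalar. A careful run of the Leibniz rule and reindexing (already on the small case $n=1$, $m=1$, $\ang{i,j'}=-2$, where one computes ${}_ir(\theta_j\theta_i-(\pi q)^2\theta_i\theta_j)=(q^{-2}-q^2)\theta_j$) gives
\[
{}_ir(e_{n,m})=-\pi_i^{np(j)}(\pi_iq_i-q_i^{-1})\,[-n\ang{i,j'}-m+1]_i\,e_{n,m-1},
\]
i.e.\ with $[-n\ang{i,j'}-m+1]_i$ rather than $[n\ang{i,j'}+m-1]_i$. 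These differ by the factor $-\pi_i^{n\ang{i,j'}+m-1}=-\pi_i^{m-1}$. Substituting the correct expression into Proposition~\ref{prop:EFcommutation}(a), using $r_i(e_{n,m})=0$ and $\pi_i^{-p(i)}=\pi_i$, directly yields the right-hand side of (b) with no further manipulation; your additional ``rewriting'' step $[n\ang{i,j'}+m-1]_i=-\pi_i^{m-1}[-n\ang{i,j'}-m+1]_i$ (which is itself a correct identity) in fact reintroduces the discrepant factor, so the answer you arrive at is $-\pi_i^{m-1}$ times the stated one. The needed-for-later consequence that the right-hand side vanishes when $m=1-n\ang{i,j'}$ happens to survive this error, but the formula as you state it does not match the lemma. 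The fix is simply to recheck the reindexing in ${}_ir(e_{n,m})$ and drop the rewriting step.
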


\begin{proof}
When $i\in I_\zero$ this is \cite[Lemma 7.1.2]{Lu}. We therefore
assume $i\in I_\one$. Then, $\ang{i,j'}\in 2\Z$ by
\ref{subsec:Cartan}(d). The left hand side of (a) is
\begin{align*}
&\sum_{r+s=m}(-1)^r\pi_i^{p(n,r;i,j)}
(\pi_iq_i)^{-r(n\ang{i,j'}+m-1)}[s+1]_i
 E_i^{(r)}E_j^{(n)}E_i^{(s+1)}\\
&\hspace{.8in}+\sum_{r+s=m}(-1)^{r+1}\pi_i^{p(n,r;i,j)+np(j)+m}(\pi_iq_i)^{-r(n\ang{i,j'}+m-1)
 -n\ang{i,j'}-2m}\\
&\hspace{.8in}\times [r\!+\!1]_i
 E_i^{(r+1)}E_j^{(n)}E_i^{(s)}\\
&=\sum_{r+s=m+1}(-1)^r\pi_i^{p(n,r;i,j)}(\pi_iq_i)^{-r(n\ang{i,j'}+m)}
 \\
 &\hspace{.8in} \times \left((\pi_iq_i)^{r-(m+1)}
 \pi_i^{r-1+m}[r]_i+(\pi_iq_i)^r[s]_i \right)E_i^{(r)}E_j^{(n)}E_i^{(s)},
\end{align*}
where we have used
\begin{align}\label{eq:pequality}
p(n,r-1;i,j)\equiv p(n,r;i,j)+np(i)p(j)+(r-1)p(i)\;\;\; (\text{mod }
2)
\end{align}
in the last line. Part (a) now follows from the computation
$$(\pi_iq_i)^{r-(m+1)}\pi_i^{r-1+m}[r]_i+(\pi_iq_i)^r[s]_i
=q_i^{-s}[r]_i\!+\!(\pi_iq_i)^r[s]_i=[r+s]_i=[m+1]_i.$$

To prove (b), observe that
\begin{align*}
F_iE_i^{(r)}E_j^{(n)}E_i^{(s)}=&\pi_i^{r+np(j)+s}
E_i^{(r)}E_j^{(n)}E_i^{(s)}\!F_i
\!-\!\pi_i^{r+np(j)+1}E_i^{(r)}E_j^{(n)}E_i^{(s-1)}[\wtd{K}_i;s\!-\!1]\\
    &
-\pi_iE_i^{(r-1)}[\wtd{K}_i;r-1]E_j^{(n)}E_i^{(s)}.
\end{align*}
Therefore,
\begin{align*}
-F_i & e_{n,m}+\pi_i^{m+np(j)}e_{n,m}F_i\\
 =&\sum_{r+s=m}(-1)^{r}\pi_i^{p(n,r;i,j)+r+np(j)+1}(\pi_iq_i)^{-r(n\ang{i,j'}+m-1)}
  E_i^{(r)}E_j^{(n)}E_i^{(s-1)}[\wtd{K}_i;s\!-\!1]\\
 &+ \sum_{r+s=m}(-1)^{r}\pi_i^{p(n,r;i,j)+1}(\pi_iq_i)^{-r(n\ang{i,j'}+m-1)}
 E_i^{(r-1)}[\wtd{K}_i;r-1]E_j^{(n)}E_i^{(s)}\\
 =&\sum_{r+s=m-1}(-1)^{r}\pi_i^{p(n,r;i,j)+r+np(j)+1}(\pi_iq_i)^{-r(n\ang{i,j'}+m-1)}
 E_i^{(r)}E_j^{(n)}E_i^{(s)}[\wtd{K}_i;s]\\
 &+ \sum_{r+s=m-1}(-1)^{r-1}\pi_i^{p(n,r+1;i,j)+1}(\pi_iq_i)^{-(r+1)(n\ang{i,j'}+m-1)}
 E_i^{(r)}[\wtd{K}_i;r]E_j^{(n)}E_i^{(s)}\\
 =&\sum_{r+s=m-1}(-1)^{r}\pi_i^{p(n,r;i,j)}(\pi_iq_i)^{-r(n\ang{i,j'}+(m-1)-1)}
  \pi_i^{np(j)+1}\pi_i^r(\bigstar)
  E_i^{(r)}E_j^{(n)}E_i^{(s)}
\end{align*}
where, using \eqref{eq:pequality} we compute
\begin{align*}
(\bigstar)=&(\pi_iq_i)^{-r}[\wtd{K}_i;-s-n\ang{i,j'}-2r]
  -(\pi_iq_i)^{-n\ang{i,j'}-m+1-r}[\wtd{K}_i;-r]\\
    =&\frac{(\pi_iq_i)^{-r}(\pi_iq_i)^{-m+1-n\ang{i,j'}-r}
    -(\pi_iq_i)^{-n\ang{i,j'}-m+1-r}(\pi_iq_i)^{-r}}{\pi_iq_i-q_i^{-1}}\wtd{J}_i\wtd{K}_i\\
     &\hspace{.5in}+\frac{(\pi_iq_i)^{-n\ang{i,j'}-m+1-r}q_i^r
     - (\pi_iq_i)^{-r}q_i^{n\ang{i,j'}+m-1+r}}{\pi_iq_i-q_i^{-1}}\wtd{K}_i^{-1}\\
     =&\pi_i^r\frac{(\pi_iq_i)^{-n\ang{i,j'}-m+1}-q_i^{n\ang{i,j'}
      +m-1}}{\pi_iq_i-q_i^{-1}}\wtd{K}_i^{-1}.
\end{align*}
This proves (b).
\qed\end{proof}

The next result, which is a $\pi$-analogue of
\cite[Lemma~7.1.3]{Lu}, follows by a straightforward induction
argument.

\begin{lem}\label{L:HigherSerre2} The following formulas hold:
\begin{enumerate}
\item[(a)]
\begin{align*}
\displaystyle
&E_i^{(N)}e_{n,m}=\sum_{k=0}^N(-1)^kq_i^{N(n\ang{i,j'}+2m)+(N-1)k}\pi_i^{N(np(j)+m)+{k\choose
2}}\\
&\hspace{1in}\times\left[{m+k\atop k}\right]_ie_{n,m+k}E_i^{(N-k)};
\end{align*}
\item[(b)]
\begin{align*}
&\displaystyle F_i^{(M)}e_{n,m} =
\sum_{h=0}^M(-1)^hq_i^{-(M-1)h}\pi_i^{M(m+np(j))+(M-m)h}
 \\
 &\hspace{1in}\times\left[{-n\ang{i,j'}-m+h\atop
h}\right]_i\wtd{K}_i^{-h}e_{n,m-h}F_i^{(M-h)}.
\end{align*}
\end{enumerate}
\end{lem}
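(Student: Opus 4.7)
The plan is to prove (a) by induction on $N$ and (b) by induction on $M$, with Lemma~\ref{L:HigherSerre1} as the base case and the relations $E_i\cdot E_i^{(N-1)}=[N]_iE_i^{(N)}$ and $F_i\cdot F_i^{(M-1)}=[M]_iF_i^{(M)}$ as the engine. The $N=1$ case of (a) is a direct rewriting of Lemma~\ref{L:HigherSerre1}(a) after using $\pi_i^{-1}=\pi_i$, and the $M=1$ case of (b) is likewise Lemma~\ref{L:HigherSerre1}(b).

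For the inductive step in (a), I would write $E_i^{(N)}e_{n,m}=[N]_i^{-1}\,E_i\cdot E_i^{(N-1)}e_{n,m}$, substitute the inductive hypothesis for $E_i^{(N-1)}e_{n,m}$, and then use Lemma~\ref{L:HigherSerre1}(a) to push the leftmost $E_i$ past each factor $e_{n,m+k}$. This produces two sums: one in which the freed $E_i$ fuses with $E_i^{(N-1-k)}$ into $[N-k]_iE_i^{(N-k)}$ (keeping $e_{n,m+k}$), and one in which $e_{n,m+k}$ is shifted to $e_{n,m+k+1}$. After reindexing the second sum $k\mapsto k-1$ and collecting the coefficient of $e_{n,m+k}E_i^{(N-k)}$, the induction step reduces to verifying the scalar identity
\[
q_i^N\pi_i^k[N-k]_i+[k]_i \;=\; q_i^{N-k}[N]_i,
\]
which follows by writing both sides over the common denominator $\pi_iq_i-q_i^{-1}$ and cancelling using $\pi_i^2=1$. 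The collapse also uses the elementary identity $\bbinom{m+k-1}{k-1}_i[m+k]_i=\bbinom{m+k}{k}_i[k]_i$.

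The inductive step for (b) runs in parallel using Lemma~\ref{L:HigherSerre1}(b); here there is one additional piece of bookkeeping, since the target carries a factor $\tK_i^{-h}$. When $F_i$ is commuted past such an accumulated factor it picks up $q_i^{-2h}$ (from $\tK_i F_i=q_i^{-2}F_i\tK_i$), while the shift $e_{n,m-h}\to e_{n,m-h-1}$ brings in a fresh $\tK_i^{-1}$ together with the $\pi_i$-sign prescribed by Lemma~\ref{L:HigherSerre1}(b). The two contributions to the coefficient of $\tK_i^{-h}e_{n,m-h}F_i^{(M-h)}$ then combine through a $(q,\pi)$-binomial identity completely parallel to the one above.

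The main obstacle is bookkeeping rather than conceptual novelty: every $\pi_i$- and $q_i$-power must match the target, including those coming from the quadratic exponents $\binom{k}{2}$ and $(N-2)k$, from the parity factor $p(n,r;i,j)$ built into $e_{n,m+k}$, and---for (b)---from commuting $F_i$ through $\tK_i^{-h}$. Systematic use of $\pi_i^2=1$, of the parity identity~\eqref{eq:pequality} employed in the proof of Lemma~\ref{L:HigherSerre1}, and of the $(q,\pi)$-binomial identities in~\S\ref{sec:gaussids} keeps all of this routine. No new ideas beyond Lemma~\ref{L:HigherSerre1} are needed; the super case is essentially the argument of \cite[Lemma~7.1.3]{Lu} with $\pi_i$-corrections tracked carefully.
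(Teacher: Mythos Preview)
Your proposal is correct and matches the paper's own approach: the paper does not spell out a proof at all but simply records that the result ``follows by a straightforward induction argument'' from Lemma~\ref{L:HigherSerre1}, exactly the induction on $N$ (resp.\ $M$) that you outline. The scalar identity you isolate, $q_i^N\pi_i^k[N-k]_i+[k]_i=q_i^{N-k}[N]_i$, is the right one and checks immediately from the definition of $[n]_i$.
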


\begin{lem}\label{L:HigherSerre3} Let $m=1-n\ang{i,j'}$. Then
$$
F_je_{n,m}-\pi_j^{mp(i)+n}e_{n,m}F_j\!=\!
\pi_j^n\!\left(\!\wtd{K}_j^{-1}\frac{q_j^{n-1}}{\pi_jq_j-q_j^{-1}}{e_{n-1,m}}-\wtd{J}_j\wtd{K}_j
\frac{q_j^{1-n}}{\pi_jq_j-q_j^{-1}}\overline{e_{n-1,m}}\!\right).
$$
\end{lem}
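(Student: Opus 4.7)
The strategy is a direct computation, pushing $F_j$ through the monomial $E_i^{(r)} E_j^{(n)} E_i^{(s)}$ inside each summand of $e_{n,m}$. The hypothesis $m = 1 - n\ang{i, j'}$ plays a crucial role: it makes the $q$-exponent $-r(n\ang{i,j'} + m - 1)$ in the definition of $e_{n,m}$ vanish, reducing $e_{n,m}$ to $\sum_{r+s=m} (-1)^r \pi_i^{p(n,r;i,j)} E_i^{(r)} E_j^{(n)} E_i^{(s)}$. Since $i \neq j$, relation~\ref{sec:Udef}(f) gives $F_j E_i^{(r)} = \pi^{rp(i)p(j)} E_i^{(r)} F_j$, so the only nontrivial commutation arises between $F_j$ and $E_j^{(n)}$.

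First I would invoke Lemma~\ref{L:commutation} with $N=1$, $M=n$ to write
$F_j E_j^{(n)} = \pi_j^n E_j^{(n)} F_j - \pi_j^{-1} E_j^{(n-1)} \bbinom{\tK_j; n-1}{1}_j$,
expand $\bbinom{\tK_j; n-1}{1}_j = \frac{(\pi_jq_j)^{n-1} \wtd{J}_j \wtd{K}_j - q_j^{1-n}\wtd{K}_{-j}}{\pi_jq_j - q_j^{-1}}$, and substitute. After commuting the residual $F_j$ on the right past the trailing $E_i^{(s)}$ (picking up $\pi^{sp(i)p(j)}$), the $\pi_j^n E_j^{(n)} F_j$ piece collects the combined factor $\pi^{(r+s)p(i)p(j)} = \pi_j^{mp(i)}$ and reassembles into $\pi_j^{mp(i)+n} e_{n,m} F_j$, which cancels the second term on the LHS.

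The remaining correction consists of summands $E_i^{(r)} E_j^{(n-1)} Z E_i^{(s)}$, where $Z$ is a linear combination of $\wtd{J}_j\wtd{K}_j$ and $\wtd{K}_{-j}$. I would then pull these group-like elements past $E_j^{(n-1)}$ (using $E_j \wtd{K}_{\pm j} = q_j^{\mp 2} \wtd{K}_{\pm j} E_j$, together with the fact that $\wtd{J}_j$ commutes with $E_j$ since $\pi^{j\cdot j}=1$) and past $E_i^{(r)}$ on the left (using $E_i \wtd{K}_{\pm j} = q^{\pm i\cdot j} \wtd{K}_{\pm j} E_i$ and $\wtd{J}_j E_i = E_i \wtd{J}_j$, the latter because $i\cdot j \in 2\Z$ by condition~\ref{subsec:Cartan}(f)). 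The combined $q$-exponent from these commutations converts the power $q_j^{n-1}$ attached to $\wtd{J}_j\wtd{K}_j$ into $q_j^{1-n}$, and dually converts $q_j^{1-n}$ attached to $\wtd{K}_{-j}$ into $q_j^{n-1}$, precisely the factors appearing on the RHS of the claimed identity.

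Once the scalars are collected, the two resulting sums over $r+s=m$ become recognizable: the $\wtd{K}_j^{-1}$ branch carries $q_i^{r\ang{i,j'}}$, which is exactly the weight in $e_{n-1,m}$ (since $(n-1)\ang{i,j'} + m - 1 = -\ang{i,j'}$ by the hypothesis), while the $\wtd{J}_j\wtd{K}_j$ branch carries $q_i^{-r\ang{i,j'}}$, which matches $\overline{e_{n-1,m}}$ via $\overline{(\pi_iq_i)^k} = q_i^{-k}$. The main bookkeeping obstacle is the $\pi$-parity calculus: one must verify that $\pi_i^{p(n,r;i,j)}\pi^{rp(i)p(j)} \equiv \pi^{p(n-1,r;i,j)} \pmod{\pi^2 - 1}$, and that the factor $\pi_i^{r\ang{i,j'}}$ arising from the bar convention equals $1$ (which holds because condition~\ref{subsec:Cartan}(d) forces $\ang{i,j'} \in 2\Z$ whenever $i \in I_{\bar 1}$, and $\pi_i = 1$ when $i \in I_{\bar 0}$). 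With these parity identities in hand, the two sums align exactly with the claimed linear combination of $\wtd{K}_j^{-1} e_{n-1,m}$ and $\wtd{J}_j\wtd{K}_j\, \overline{e_{n-1,m}}$.
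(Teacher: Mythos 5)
Your proof follows essentially the same route as the paper's: both reduce $e_{n,m}$ to the unweighted sum using the hypothesis $m=1-n\langle i,j'\rangle$, commute $F_j$ past the monomial $E_i^{(r)}E_j^{(n)}E_i^{(s)}$ via Lemma~\ref{L:commutation} to produce a $\bbinom{\tK_j;n-1}{1}_j$ correction term, push the resulting $\wtd J_j\wtd K_j$ and $\wtd K_{-j}$ factors to the left past $E_j^{(n-1)}$ and $E_i^{(r)}$, and then use the parity identity $p(n,r;i,j)=p(n-1,r;i,j)+rp(i)p(j)$ together with $\pi_i^{r\langle i,j'\rangle}=1$ to recognize $e_{n-1,m}$ and $\overline{e_{n-1,m}}$. (Minor typo: in your parity check $\pi_i^{p(n,r;i,j)}\pi^{rp(i)p(j)}\equiv\pi^{p(n-1,r;i,j)}$, the right-hand side should read $\pi_i^{p(n-1,r;i,j)}$, but the argument you give is the correct one.)
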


\begin{proof}
To begin, if $r+s=m$, then
$$
F_jE_i^{(r)}E_j^{(n)}E_i^{(s)} =\pi_j^{mp(i)+n}E_i^{(r)}E_j^{(n)}
E_i^{(s)}F_j-\pi_j^{rp(i)+1}E_i^{(r)}E_j^{(n-1)}[\wtd{K}_j,n-1]E_i^{(s)}.
$$
Since $m=1-\ang{i,j'}$, the exponent of $\pi_iq_i$ in $e_{n,m}$ is
0; see \eqref{E:eijnm}.  Therefore,
\begin{align*}
&F_je_{n,m}-\pi_j^{mp(i)+n}e_{n,m}F_j\\
 &=-\pi_j\sum_{r+s=m}(-1)^r\pi_i^{p(n-1,r;i,j)}
 [\wtd{K}_j;1-n-ra_{ji}]E_i^{(r)}E_j^{(n-1)}E_i^{(s)}\\
 &=-\pi_j\sum_{r+s=m}(-1)^r\pi_i^{p(n-1,r;i,j)}
 q_i^{-r\ang{i,j'}}\frac{(\pi_jq_j)^{1-n}}{\pi_jq_j-q_j^{-1}}
 \wtd{J}_j\wtd{K}_jE_i^{(r)}E_j^{(n-1)}E_i^{(s)}\\
 &\hspace{.25in}+\pi_j\sum_{r+s=m}(-1)^r
 \pi_i^{p(n-1,r;i,j)}(\pi_iq_i)^{r\ang{i,j'}}
 \frac{(\pi_jq_j)^{n-1}}{\pi_jq_j-q_j^{-1}}\wtd{K}_j^{-1}
 E_i^{(r)}E_j^{(n-1)}E_i^{(s)}.
\end{align*}
We have used $p(n,r;i,j)=p(n-1,r;i,j)+rp(i)p(j)$ to simplify the
second line, and $q_i^{-r\ang{i,j'}}=q_j^{-r\ang{j,i'}}$ and
$\pi_j^{r\ang{j,i'}}=1=\pi_i^{r\ang{i,j'}}$ in the two subsequent
lines. Since $(n-1)\ang{i,j'}+m-1=-\ang{i,j'}$, the result follows.
\qed\end{proof}

As a consequence of the previous lemmas we obtain a generalization
of the quantum Serre relations.

\begin{prop}[Higher Serre Relations] \label{L:HigherSerre}
Let $i,j\in I$ be distinct.
If $m>-n\ang{i,j'}$, then $e_{i,j;n,m}=0$.
\end{prop}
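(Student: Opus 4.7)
The plan is to reduce the statement to a single ``boundary'' identity $e_{n,1-n\langle i,j'\rangle}=0$ for each $n$, and then establish that identity by induction on $n$.

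First I would invoke Lemma~\ref{L:HigherSerre1}(a), which expresses $[m+1]_i\,e_{n,m+1}$ in terms of $e_{n,m}$ and $E_i$. Since $[m+1]_i\neq 0$ in $\Qq^\pi$ for $m\geq 0$, once $e_{n,m_0}=0$ at the minimal admissible value $m_0=1-n\langle i,j'\rangle$, a routine induction on $m$ propagates the vanishing to every $m\geq m_0$, which is the full range required. It therefore suffices to prove $e_{n,1-n\langle i,j'\rangle}=0$ for each $n\geq 0$. The case $n=0$ reduces to showing a scalar multiple of $E_i^{(m)}$ vanishes for $m\geq 1$, which follows from the quantum binomial identity~\eqref{eq:binomidc} evaluated at $z=-(\pi_iq_i^2)^{-(m-1)}$ (the $j=m-1$ factor vanishes). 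The case $n=1$ is exactly the quantum Serre relation of Proposition~\ref{prop:quantserre}, since the exponent of $\pi_iq_i$ in each term of~\eqref{E:eijnm} collapses to $0$ at this critical value of $m$.

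For the inductive step with $n\geq 2$, I set $m=1-n\langle i,j'\rangle$. Since $\langle i,j'\rangle\leq 0$, we have $m>-(n-1)\langle i,j'\rangle$, so by induction $e_{n-1,m}=0$ and hence $\overline{e_{n-1,m}}=0$ as well. Then Lemma~\ref{L:HigherSerre3} collapses to $F_j\,e_{n,m}=\pi_j^{mp(i)+n}\,e_{n,m}\,F_j$, while Lemma~\ref{L:HigherSerre1}(b) at this boundary $m$ gives $F_i\,e_{n,m}=\pi_i^{m+np(j)}\,e_{n,m}\,F_i$ because $[-n\langle i,j'\rangle-m+1]_i=[0]_i=0$. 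For any $k\neq i,j$, the defining relation~\ref{sec:Udef}(f) yields $F_kE_l=\pi^{p(k)p(l)}E_lF_k$ for $l=i,j$, which propagates to $F_k\,e_{n,m}=\pi_k^{p(e_{n,m})}\,e_{n,m}\,F_k$ directly. After checking that all three of these relations coincide with the uniform commutation $F_k\,e_{n,m}=\pi_k^{p(e_{n,m})}\,e_{n,m}\,F_k$, where $p(e_{n,m})=np(j)+mp(i)$, I would lift $e_{n,m}$ to $x^+$ for the unique $x\in\ff$ with $x^+=e_{n,m}$ and invoke Proposition~\ref{prop:x=0}(a) to conclude $x=0$, hence $e_{n,m}=0$.

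The main technical obstacle is the $\pi$-exponent bookkeeping: the three commutation formulas produced by Lemmas~\ref{L:HigherSerre1}(b) and~\ref{L:HigherSerre3} and by trivial super-commutativity must unify into the single parity-compatible condition required by Proposition~\ref{prop:x=0}(a). This requires a small case analysis on the parities of $i$ and $j$, but in each case the exponents line up correctly because $\pi^2=1$ and $\pi_k=\pi^{p(k)}$.
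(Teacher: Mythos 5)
Your proposal is correct and follows essentially the same route as the paper: reduce by induction on $m$ (via Lemma~\ref{L:HigherSerre1}(a)) to the boundary case $m=1-n\ang{i,j'}$, establish it by induction on $n$ using Lemmas~\ref{L:HigherSerre1}(b) and~\ref{L:HigherSerre3} to show that $e_{n,m}$ supercommutes with every $F_k$, and then invoke Proposition~\ref{prop:x=0}. One small imprecision: at $n=1$ the element $e_{1,1-\ang{i,j'}}$ is not literally the Serre sum but rather $\sigma$ applied to it (the paper notes $e'_{1,1-\ang{i,j'}}=\sigma(e_{1,1-\ang{i,j'}})$ is the Serre relation), so you need the extra observation that $\sigma$ is an (anti-)automorphism of $\ff$ fixing $0$; also your $n=0$ base case is superfluous since the $n$-induction starts at $n=1$.
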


\begin{proof}
As before, fix $i$ and $j$ and write $e_{n,m}=e_{i,j;n,m}$. Note
that $e'_{1,1-\ang{i,j'}}=\sigma(e_{1,1-\ang{i,j'}})$ is just the usual quantum Serre relations
(see Proposition~\ref{prop:quantserre}). Using Lemma~
\ref{L:HigherSerre1}(a), it follows by induction on $m$ that
$e_{1,m}=0$ for $m\geq1-\ang{i,j'}$. Now, let $n>1$ and assume that
$e_{n-1,m}=0$ for all $m>(1-n)\ang{i,j'}$. By Lemma~
\ref{L:HigherSerre1}(b), $e_{n,1-n\ang{i,j'}}$ supercommutes with
$F_i$, and by Lemma \ref{L:HigherSerre3} and induction, it
supercommutes with $F_j$ (note that
$m=1-n\ang{i,j'}>(1-n)\ang{i,j'}$). It trivially supercommutes with
$F_k$ for $k\neq i,j$. Therefore, by Proposition~\ref{prop:x=0} we
deduce that $e_{n,1-n\ang{i,j'}}=0$. Again, using
Lemma \ref{L:HigherSerre1}(a) and induction $m$ the $e_{n,m}=0$ for $m\geq
1-n\ang{i,j'}$.
\qed\end{proof}

\end{document}